\newtheorem*{lemma*}{Lemma}
\newtheorem{theoremm}{Theorem}
\newtheorem{theorem}{Theorem}[section]
\newtheorem{lemma}[theorem]{Lemma}
\newtheorem{sublemma}[theorem]{Sublemma}
\newtheorem{proposition}[theorem]{Proposition}
\newtheorem{corollary}[theorem]{Corollary}
\newtheorem{remark}[theorem]{Remark}
\def\d{\,{\rm d}}
\def\R{{\mathbb R}}
\def\D{{\mathcal D}}
\DeclareMathOperator{\Var}{Var}
\def\diams{\raisebox{1.5pt}{$\scriptstyle\diamond$} }
\title{Winding and intersection of Brownian motions}
\author{Isao Sauzedde}
\address{University of Oxford}
\email{isao.sauzedde@maths.ox.ac.uk}
\keywords{Planar Brownian motion, windings numbers, Intersection measure}
\subjclass[2020]{Primary 60J65; 60J55 Secondary 60G17}
\begin{document}
\begin{abstract}
We study the set of points $\mathcal{D}_{n,m}$ around which two independent Brownian motions wind at least $n$ (resp. $m$) times. We prove that its area is asymptotically equivalent, in $L^p$ and almost surely, to $\frac{\ell(\R^2)}{4\pi^2 n m}$, where $\ell$ is the intersection measure of the two trajectories.
We also prove that the properly scaled Lebesgue measure carried by $\mathcal{D}_{n,m}$ converges almost surely weakly toward $\ell$.
\end{abstract}
\maketitle
% \selectlanguage{francais}

% \begin{abstract}
% On étudie l'ensemble  $\mathcal{D}_{n,n}$ des points autour desquels deux mouvement browniens indépendants s'enlacent chacun au moins $n$ fois. On démontre que l'aire de cet ensemble est asymptotiquement équivalente, dans $L^2$ et au sens presque s\^ur, à $\frac{\ell^{X,Y}(\R^2)}{4\pi^2 n^2}$ où $\ell$ est la mesure d'intersection des deux trajectoires.
% On démontre également que la mesure de Lebesgue portée par $\mathcal{D}_{n,n}$, lorsqu'elle est correctement mise à l'échelle, converge en un certain sens vers la mesure aléatoire $\ell$.
% \end{abstract}
% \selectlanguage{english}
\tableofcontents

\section{Introduction}

In \cite{Werner}, motivated by a question of J.-F. Le Gall about a possible Green's formula for the planar Brownian motion and with applications in physics \cite{Comtet,Edwards}, W.Werner studied the Lebesgue measures $D^X_n$ and $A^X_n$ of the sets
\[\mathcal{D}^X_n=\{z\in \R^2: \theta_X(z)\geq n\}, \quad \mathcal{A}^X_n=\{z\in \R^2: \theta_X(z)= n\}\]
of the points around which the planar Brownian motion $X:[0,1]\to\R^2$ winds at least $n$ times (resp. exactly $n$ times).
By a very careful asymptotic analysis of the joint law of winding around two points, he managed to show that $D^X_n$ (resp. $A^X_n$)
 is asymptotically equivalent to~$\frac{1}{2\pi n}$ (resp.~$\frac{1}{2\pi n^2}$) in $L^2$.

In \cite{LAWA}, motivated by the same questions, we have pushed the asymptotic expansion of $D_n^X$ by showing that, both in $L^{\infty -}$ and in the almost sure sense, \[D^X_n=\frac{1}{2\pi n}+O(n^{-\frac{3}{2}+\epsilon} ).\]
It allowed us to proved an almost sure version of the stochastic Green's formula,%\footnote{The regularisation procedure that we used is different from the two regularisation procedures that W.Werner used in \cite{Werner2}. The ones of W.Werner are better motivated from a physical perspective, but ours possesses better geometrical properties.}
\[ \int X^1 \d X^2\overset{a.s.}= \lim_{n\to \infty} \sum_{k=-n}^{n} k A^X_k.
\]

The present work was initially motivated by the idea of pushing the asymptotic expansion further. We think that such an expansion should lead for example to a better understanding of magnetic impurities as described in \cite{Desbois} and \cite[Section 6]{Comtet2}, and also of the winding field associated with the Brownian loop soup \cite{Camia,Camia2}. In fact, a better understanding of $\D^X_n$ should allow to relates this winding field with the multiplicative chaos of the Brownian loop soup \cite{Lupu}.
We also expect that this asymptotic expansion should allow to define stochastic integrals of extremely irregular random $1$-forms, extending the framework we developed in \cite{DSI}.

In fact, the second order term in the asymptotic expansion of $D^X_n$ seems to be deeply related to the self-intersections of the Brownian motion, and actually determined by its self-intersection local time. In order to prove such a result, it is needed to first understand the intersection of the large winding sets of different Brownian pieces.

This paper is devoted to the study of the large $n,m$ asymptotic of the joint large winding set
\[ \mathcal{D}^{X,Y}_{n,m}=\{z\in \mathbb{R}^2: \theta_X(z)\geq n, \theta_Y(z)\geq m\},\]
where $X$ and $Y$ are two planar Brownian motions from $[0,1]$ to $\R^2$.

Our main results are the following. Let $\ell^{X,Y}$ be the intersection measure of $X$ and $Y$, as defined for example in \cite[(1-a)]{LeGall} or in \cite{Geman}.
For a curve $X$ in the plane, let $\bar{X}$ be the curve obtained by closing $X$ with a straight line segment between its endpoints. For $z\in \mathbb{R}^2$ outside the range of $\bar{X}$, let then $\theta_X(z)\in \mathbb{Z}$ be the winding number of $\bar{X}$ around $z$.
Let $D^{X,Y}_{n,m}=|\mathcal{D}^{X,Y}_{n,m}|$ be the Lebesgue measure of $\mathcal{D}^{X,Y}_{n,m}$, and let $\mu^{X,Y}_{n,m}$ be the measure given by
\[ \frac{\d \mu^{X,Y}_{n,m}}{\d z}(z)=nm \mathbbm{1}_{\mathcal{D}^{X,Y}_{n,m}}(z).\]
\begin{theoremm}
\label{th:intersection}
  Let $m$ be a non-decreasing function of $n$, and assume that there exists $0<c_1<c_2\leq 1 $ such that $n^{c_1}<m\leq n^{c_2}$ for all positive integers $n$. Then,
  \begin{itemize}
  \item
  Both in $L^p$ for all $p\in [1,\infty)$ and in the almost sure sense,
  \[nm D^{X,Y}_{n,m}\underset{n\to \infty}{{\longrightarrow}} \frac{\ell^{X,Y}(\mathbb{R}^2)}{4\pi^2 }.\]
  \item For all $\epsilon>0$,
  \[
  m^{\frac{1}{2}-\epsilon} \Big( nm D^{X,Y}_{n,m}-  \frac{\ell^{X,Y}(\mathbb{R}^2)}{4\pi^2 } \Big)\underset{n\to \infty}{\overset{L^p}{\longrightarrow}} 0.
  \]
  \item
  For all $\epsilon>0$,
  \[
   m^{\frac{1}{2}\frac{c_1(1+c_2)}{c_2(1+c_1)}-\epsilon } \Big( nm D^{X,Y}_{n,m}-  \frac{\ell^{X,Y}(\mathbb{R}^2)}{4\pi^2 } \Big)\underset{n\to \infty}{\overset{a.s.}{\longrightarrow}} 0.
  \]
  \end{itemize}
\end{theoremm}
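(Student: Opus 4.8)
The picture behind the statement is that a point $z$ with $\theta_X(z)\ge n$ and $\theta_Y(z)\ge m$ must lie within distance $\sim e^{-cn}$ of the range of $X$ and within $\sim e^{-cm}$ of the range of $Y$, hence near a point where the two trajectories meet; there the two winding conditions cut out regions of area $\sim\frac{1}{2\pi n}$ and $\sim\frac{1}{2\pi m}$ that are asymptotically independent because $X\perp Y$, so the joint area is $\sim\frac{1}{4\pi^2nm}$ per ``unit of local intersection'', and the intersection measure counts exactly those units. To make this quantitative, I start from the skew-product representation $X_t-z=\exp(\beta_{H_t}+i\gamma_{H_t})$, with $\beta,\gamma$ independent linear Brownian motions and $H_t=\int_0^t|X_s-z|^{-2}\,\d s$ the Kac clock, and from the analyses of \cite{Werner} and \cite{LAWA}, and I record: for Lebesgue-almost every $z$,
\[\mathbb{P}\bigl(\theta_X(z)\ge n\bigr)=\frac{u_X(z)}{2\pi n}\bigl(1+O(n^{-1/2+\epsilon})\bigr),\qquad u_X(z):=\int_0^1 p_s(z)\,\d s,\]
the error stemming from the central limit theorem for $\gamma_H$ at clock-times $H\asymp n^2$; and, in conditional form, the random measures $n\,\mathbbm{1}_{\mathcal{D}^X_n}(z)\,\d z$ converge, a.s.\ and in every $L^p$, weakly to $\frac{1}{2\pi}\mu_X$, where $\mu_X(\d z)=\int_0^1\delta_{X_s}(\d z)\,\d s$ is the occupation measure of $X$; moreover these statements hold locally, for the winding inside a fixed small disk with controlled boundary data on its circle.

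\textbf{Coarse-graining.} Since $X\perp Y$, for fixed trajectories
\[nm\,D^{X,Y}_{n,m}=\int_{\mathbb{R}^2}\bigl(n\,\mathbbm{1}_{\mathcal{D}^X_n}(z)\bigr)\bigl(m\,\mathbbm{1}_{\mathcal{D}^Y_m}(z)\bigr)\,\d z.\]
Let $\mathcal{G}_\varepsilon$ be generated by $X$ and $Y$ observed at resolution $\varepsilon$ (say, sampled along their successive $\varepsilon$-oscillation times). For $\varepsilon$ fixed and $n$ large (so $e^{-cm}\ll\varepsilon$), applying the single-path input conditionally on $\mathcal{G}_\varepsilon$ yields
\[\mathbb{E}\bigl[nm\,D^{X,Y}_{n,m}\bigm|\mathcal{G}_\varepsilon\bigr]\ \underset{n\to\infty}{\longrightarrow}\ \frac{1}{4\pi^2}\,\mathbb{E}\bigl[\ell^{X,Y}(\mathbb{R}^2)\bigm|\mathcal{G}_\varepsilon\bigr],\]
using the identity $\int u_Xu_Y\,\d z=\mathbb{E}[\ell^{X,Y}(\mathbb{R}^2)]$ and its conditional version $\int u_Y\,\d\mu_X=\int_0^1 u_Y(X_s)\,\d s=\mathbb{E}[\ell^{X,Y}(\mathbb{R}^2)\mid X]$. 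For the conditional variance, note that given $\mathcal{G}_\varepsilon$ the integral splits as a sum over the $O(\varepsilon^{-2})$ mesoscopic boxes that both coarse paths visit, the fine-scale winding-areas in distinct boxes are conditionally independent, and each has conditional mean $O(\varepsilon^{2})$; the self-similarity of Brownian winding (the single-path input applied in each box) then gives $\Var\bigl(nm\,D^{X,Y}_{n,m}\mid\mathcal{G}_\varepsilon\bigr)=O(\varepsilon^{2-\epsilon})$ uniformly in large $n,m$, and analogous bounds for the higher conditional moments.

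\textbf{Conclusion and rates.} Inserting $\pm\mathbb{E}[\,\cdot\mid\mathcal{G}_\varepsilon]$ and $\pm\frac{1}{4\pi^2}\mathbb{E}[\ell^{X,Y}(\mathbb{R}^2)\mid\mathcal{G}_\varepsilon]$ splits $\bigl\|nm\,D^{X,Y}_{n,m}-\frac{1}{4\pi^2}\ell^{X,Y}(\mathbb{R}^2)\bigr\|_p$ into a conditional-fluctuation term $O(\varepsilon^{1-\epsilon})$ (from the conditional moment bounds), a conditional-bias term that for fixed $\varepsilon$ vanishes as $n\to\infty$ at speed $O(m^{-1/2+\epsilon})$ (from the single-path error), and $\frac{1}{4\pi^2}\bigl\|\mathbb{E}[\ell^{X,Y}(\mathbb{R}^2)\mid\mathcal{G}_\varepsilon]-\ell^{X,Y}(\mathbb{R}^2)\bigr\|_p=O(\varepsilon^{1-\epsilon})$ (the regularisation rate of the intersection measure); balancing $\varepsilon=\varepsilon(m)$ gives the $L^p$ convergence and the rate $m^{1/2-\epsilon}$. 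The almost sure statements follow by Borel--Cantelli along a subsequence, using the monotonicity $\mathcal{D}^{X,Y}_{n',m'}\subseteq\mathcal{D}^{X,Y}_{n,m}$ for $n\le n'$, $m\le m'$ to interpolate between subsequence points, together with the a.s.\ forms of the three terms above; the competition between the a.s.\ decay of the bias and that of the fluctuation/regularisation terms, under the constraint $n^{c_1}<m\le n^{c_2}$ (so that $nm$ may grow anywhere between $n^{1+c_1}$ and $n^{1+c_2}$ while $m$ itself grows at least like $n^{c_1}$), is what produces the exponent $\frac{1}{2}\frac{c_1(1+c_2)}{c_2(1+c_1)}$, which equals $\frac12$ when $c_1=c_2$.

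\textbf{Main obstacle.} The crux is to feed the single-point estimates — which per se say nothing about the Lebesgue-null intersection set — into a statement on that set: this is the uniformity required in the single-path input (over the base point, over the surrounding mesoscopic scale, and over its boundary data) so that the coarse-graining goes through, and the proof that the winding-area at scales finer than $\varepsilon$ does not accumulate, i.e.\ the self-similar variance bound. I expect the quantitative error analysis — sharpening the $O(n^{-1/2+\epsilon})$ in the single-path input and carrying it through the conditioning — to be the most delicate part and the source of the explicit rates.
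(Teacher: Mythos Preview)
Your heuristic is sound, but the proposal is a sketch rather than a proof, and one of its load-bearing claims is false as written. The assertion that ``the fine-scale winding-areas in distinct boxes are conditionally independent'' given $\mathcal G_\varepsilon$ cannot hold: $\theta_X(z)$ is a global functional of the whole path, and even after conditioning on a coarse skeleton the winding around a point in box $B$ depends on the fine increments of $X$ everywhere, not only while $X$ is near $B$. You gesture at a local version (``winding inside a fixed small disk with controlled boundary data''), but passing from such local quantities back to $\theta_X$ is precisely the crux. The paper resolves this by \emph{time}-slicing rather than spatial coarse-graining: with $X,Y$ cut into $T$ pieces one has $\theta_X=\sum_i\theta_{X^i}+O(T)$, and explicit inclusion bounds (Corollary~\ref{coro:encadrement}, Proposition~\ref{prop:encadrement}) sandwich $\mathcal D^{X,Y}_{n,m}$ between $\bigcup_{i,j}\mathcal D^{i,j}_{n\pm T\sqrt n,\,m\pm T\sqrt m}$ up to remainders whose $L^p$ norms are controlled via moment bounds on joint large-winding sets of several pieces (Proposition~\ref{prop:l2bound}). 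This replaces your independence claim and is not free.

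Two further ingredients are missing. The rate $m^{-1/2+\epsilon}$ does not drop out of a single-path bias $O(m^{-1/2+\epsilon})$ plus a conditional variance bound; the paper first reaches only a weak $L^2$ rate ($m^{-1/10+\epsilon}$ via Proposition~\ref{prop:asymp} with $T=m^{1/5}$) and then runs a genuine bootstrap (Section~\ref{sec:boot}) that feeds the current rate back into the piecewise decomposition and iterates, driving the second moment down to $m^{-1+\epsilon}$. Your ``balancing $\varepsilon=\varepsilon(m)$'' does not capture this recursion. For $L^p$ and the almost-sure claim the paper then proves a concentration inequality (Lemma~\ref{le:concentration}) --- $\mathbb P(|nmD^{X,Y}_{n,m}-\ell^{X,Y}(\mathbb R^2)/4\pi^2|\ge m^{-1/2+\epsilon})=O(m^{-r})$ for every $r$ --- via a second bootstrap on tail probabilities. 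The a.s.\ exponent $\tfrac12\tfrac{c_1(1+c_2)}{c_2(1+c_1)}$ then comes from applying a monotone-interpolation lemma (Lemma~\ref{le:max}) to the sequence indexed by $N=nm$; it reflects the loss in passing from the two-parameter family $(n,m)$ to the one-parameter $N$ under the constraint $n^{c_1}<m\le n^{c_2}$, not a bias--fluctuation trade-off.
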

We endow the space $\mathcal{M}$ of finite measures over $\R^2$ with the $1$-Wasserstein distance
\[ d_1(\mu,\nu)=\sup\{ \int_{\R^2} f \d (\mu-\nu): f \mbox{ $1$-Lipschitz}, f(0)=0 \},\]
which metrizes the weak convergence of measures supported on a given compact.
\begin{theoremm}
  \label{th:mesure}
  Let $m$ be a non-decreasing function of $n$, and assume that there exists $\epsilon>0$ such that $n^\epsilon<m\leq n$ for all positive integer $n$.

  Then, for all $p\in [1,\infty)$, the measure $\mu^{X,Y}_{n,m}$ converges in $L^p(\Omega, (\mathcal{M},d_1) )$ toward $\frac{\ell^{X,Y}}{4\pi^2 }$.

  Besides, almost surely,
  \[ \mu^{X,Y}_{n,m} \implies \frac{\ell^{X,Y}}{4\pi^2}.\]
\end{theoremm}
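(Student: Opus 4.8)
The plan is to deduce Theorem~\ref{th:mesure} from Theorem~\ref{th:intersection} and a \emph{localized} version of it, together with a soft argument that reduces convergence in $d_1$ to convergence against a fixed countable family of test functions. Write $\mu_n:=\mu^{X,Y}_{n,m}$, $\nu:=\frac{1}{4\pi^2}\ell^{X,Y}$ and $R:=\|X\|_{\infty,[0,1]}\vee\|Y\|_{\infty,[0,1]}$; since $\mathcal D^{X,Y}_{n,m}\subseteq\mathcal D^X_1$ lies in the convex hull of $X([0,1])$ and $\ell^{X,Y}$ is carried by $X([0,1])\cap Y([0,1])$, on the a.s.\ event $\{R<\infty\}$ all the $\mu_n$ and $\nu$ are carried by the compact $\overline{B(0,R)}$. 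Fix once and for all the countable family $\mathcal G$ of polynomials on $\R^2$ with rational coefficients; by Stone--Weierstrass it is $\|\cdot\|_\infty$-dense in $C(K)$ for every compact $K$. A routine Arzel\`a--Ascoli argument gives the following: if $\lambda_j,\lambda$ are finite measures on a compact $K$ with $\lambda_j(K)\to\lambda(K)$ and $\int g\d\lambda_j\to\int g\d\lambda$ for every $g\in\mathcal G$, then $d_1(\lambda_j,\lambda)\to 0$ (cover the $\|\cdot\|_\infty$-compact set of $1$-Lipschitz functions on $K$ vanishing at a point by finitely many $\|\cdot\|_\infty$-balls centered at elements of $\mathcal G$, and use that $\lambda_j(K)+\lambda(K)$ stays bounded). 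Since Theorem~\ref{th:intersection} already gives $\mu_n(\R^2)=nmD^{X,Y}_{n,m}\to\nu(\R^2)$ a.s.\ (and in every $L^p$), it remains to prove that for each fixed $g\in\mathcal G$,
\[ nm\int_{\mathcal D^{X,Y}_{n,m}} g(z)\d z\;\xrightarrow[n\to\infty]{}\;\int_{\R^2} g\d\nu\qquad\text{a.s.}\]
Intersecting these countably many full-measure events with $\{\mu_n(\R^2)\to\nu(\R^2)\}$ and $\{R<\infty\}$ and applying the lemma with $K=\overline{B(0,R)}$ then yields $d_1(\mu_n,\nu)\to 0$ a.s., i.e.\ $\mu_n\Rightarrow\nu$ a.s. The $L^p$ statement follows from this by uniform integrability: $d_1(\mu_n,\nu)\le R\,(nmD^{X,Y}_{n,m}+\nu(\R^2))$ is bounded in every $L^q$ uniformly in $n$, because $nmD^{X,Y}_{n,m}$ is ($L^p$-convergent, hence) bounded in every $L^q$ by Theorem~\ref{th:intersection}, and $R$ has Gaussian tails.

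For the displayed a.s.\ convergence I localize in space. Fix $g\in\mathcal G$; on $\{R<\infty\}$ it is Lipschitz on $\overline{B(0,R)}$ with some random constant $L$. For $k\ge1$ partition $\R^2$ into half-open dyadic squares $(Q^k_i)_i$ of side $2^{-k}$ with centers $z^k_i$ (understanding the localized statement below to hold for all such boxes, and shifting the grid by a null amount if needed so that $\ell^{X,Y}$ charges no $\partial Q^k_i$, which is possible since $\ell^{X,Y}$ puts positive mass on at most countably many lines). Then
\[ \Big|\,nm\int_{\mathcal D^{X,Y}_{n,m}} g\d z-\sum_i g(z^k_i)\,nm\,\big|\mathcal D^{X,Y}_{n,m}\cap Q^k_i\big|\,\Big|\;\le\;L\,2^{-k}\,nm D^{X,Y}_{n,m}, \]
and likewise $\big|\int g\d\nu-\sum_i g(z^k_i)\,\nu(Q^k_i)\big|\le L\,2^{-k}\,\nu(\R^2)$; both right-hand sides go to $0$ a.s.\ as $k\to\infty$, uniformly in $n$, because $nmD^{X,Y}_{n,m}$ converges. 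As only finitely many boxes meet $\overline{B(0,R)}$, the displayed convergence follows once we know the \emph{localized Theorem~\ref{th:intersection}}: for every dyadic box $Q$ as above,
\[ nm\,\big|\mathcal D^{X,Y}_{n,m}\cap Q\big|\;\xrightarrow[n\to\infty]{}\;\frac{\ell^{X,Y}(Q)}{4\pi^2}\qquad\text{a.s.\ (and in every $L^p$).}\]

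The main work --- and the principal obstacle --- is this localized statement, which I expect to come out of the proof of Theorem~\ref{th:intersection} without essentially new ideas. That proof should approximate $D^{X,Y}_{n,m}$ by a sum of contributions attached to short time-windows (excursions) of $X$ and of $Y$, each carried by a small spatial neighborhood of a point of the corresponding path, the limiting spatial density being $\frac{1}{4\pi^2 nm}$ times the intersection measure; intersecting with a box $Q$ merely discards the windows lying away from $Q$, and since the $L^p$ moment bounds in that proof are additive over the windows, restricting to $Q$ can only improve them. The a.s.\ localized convergence then follows from the same Borel--Cantelli argument that gives the third bullet of Theorem~\ref{th:intersection}: applied to $nm\,|\mathcal D^{X,Y}_{n,m}\cap Q|$ it yields $m^{\beta-\epsilon}\big(nm\,|\mathcal D^{X,Y}_{n,m}\cap Q|-\frac{\ell^{X,Y}(Q)}{4\pi^2}\big)\to0$ a.s.\ for some $\beta>0$ (take $c_1=\epsilon_0$, $c_2=1$ in Theorem~\ref{th:intersection}, permissible since $n^{\epsilon_0}<m\le n$, giving $\beta=\epsilon_0/(1+\epsilon_0)$), and since $m\to\infty$ with $m^{\beta-\epsilon}\to\infty$ this gives convergence along the full sequence of integers $n$, so no subsequence is lost. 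Should the proof of Theorem~\ref{th:intersection} instead proceed by a global moment computation, one would additionally have to compute the joint moments of the vector $\big(nm\,|\mathcal D^{X,Y}_{n,m}\cap Q^k_i|\big)_i$ and match them with those of $\big(\nu(Q^k_i)\big)_i$ --- feasible by the same method, since the intersection measures of disjoint boxes come from disjoint portions of the two paths, but combinatorially heavier.
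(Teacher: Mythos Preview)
Your reduction to a countable family of test functions and the uniform-integrability argument for $L^p$ are fine, and the dyadic-box approximation of $\int g\,\d\mu_n$ is also fine. The gap is your ``localized Theorem~\ref{th:intersection}'': you do not prove it, and the justification you offer does not survive contact with the actual proof. The engine behind Theorem~\ref{th:intersection} is the scaling self-similarity $D^{i,j}_{n,m}\stackrel{d}{=}T^{-1}D^{\hat X,\hat Y}_{n,m}$ (conditionally on starting points), which drives both the $L^2$ bootstrap (Section~\ref{sec:boot}) and the concentration estimate (Lemma~\ref{le:concentration}). Intersecting with a fixed box $Q$ destroys this: under the scaling, $Q$ becomes a box of side $\sqrt{T}\cdot 2^{-k}$ at a running location, so $|\mathcal D^{i,j}_{n,m}\cap Q|$ does not reduce to the same quantity at a different scale, and neither the bootstrap nor the concentration argument applies verbatim. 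Your claim that ``restricting to $Q$ can only improve'' the moment bounds is not what those sections prove; they control \emph{sums} of signed quantities via self-similarity, not positive terms that one can simply drop. One can rescue the localized statement by time-discretising first, keeping only pieces $(i,j)$ with $\mathcal D^{i,j}_{n,m}\subset Q$, and bounding boundary pieces separately, but this is extra work you have not done---and, more to the point, it is exactly the machinery that the paper uses to avoid the localized statement altogether.

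The paper's proof is structurally simpler than yours because it discretises in \emph{time} rather than in space. Writing $\mu^{i,j}_{n,m}=nm\,\mathbbm{1}_{\mathcal D^{i,j}_{n,m}}\,\d z$, one splits
\[
\int f\,\d\mu_{n,m}-\int f\,\d\ell^{X,Y}
\;\approx\;\sum_{i,j}\Big(\int f\,\d\mu^{i,j}_{n,m}-f(X_{iT^{-1}})\,nmD^{i,j}_{n,m}\Big)
+\sum_{i,j}f(X_{iT^{-1}})\big(nmD^{i,j}_{n,m}-\ell^{i,j}(\R^2)\big)
+\sum_{i,j}\Big(f(X_{iT^{-1}})\ell^{i,j}(\R^2)-\int f\,\d\ell^{i,j}\Big),
\]
plus the comparison error $R$ from Proposition~\ref{prop:encadrement}. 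The first and third sums are controlled \emph{uniformly over $1$-Lipschitz $f$} by $\|X\|_{\mathcal C^\alpha}T^{-\alpha}$ times, respectively, $nm\sum D^{i,j}$ and $\sum\ell^{i,j}(\R^2)=\ell^{X,Y}(\R^2)$, because each $\mathcal D^{i,j}_{n,m}$ and each $\mathrm{supp}\,\ell^{i,j}$ has diameter $O(T^{-\alpha})$. The middle sum is handled piecewise by the concentration Lemma~\ref{le:concentration} (for a.s.) or by the $L^p$ convergence of Theorem~\ref{th:intersection} applied to each piece (for $L^p$). This gives $d_1(\mu_{n,m},\tfrac{1}{4\pi^2}\ell^{X,Y})\to0$ directly, with no spatial localization and no localized Theorem~\ref{th:intersection}. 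In short: your space-first discretisation forces you to prove a new theorem, while the paper's time-first discretisation gets spatial localisation for free from the H\"older continuity of $X$.
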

To be completely clear, the first statement means that for all $p\in[1,\infty)$
\[
\mathbb{E}\Big[ \Big|\sup\{
\int_{\R^2} f \d (\mu^{X,Y}_{n,m}-\ell^{X,Y} ): f \mbox{ $1$-Lipschitz}, f(0)=0 \}  \Big|^p\Big]\underset{n\to \infty}{\longrightarrow} 0.
\]

We expect our method to extend without additional difficulty to the study of the joint large winding sets of three or more Brownian motions. Let us remark that the general pattern is very similar to the one we used in \cite{LAWA} for a single Brownian motion, which shows its robustness, even though the technical details are much more subtle here.

\vspace{0.2cm}
In \cite{BWE}, we considered already the normalized Lebesgue measure carried by $\D^X_n$, with the intent of extending the stochastic Green's formula to general $1$-forms.\footnote{Part of this program was achieved in the author's PhD manuscript, Theorem 4.6.1. The regularisation procedure is shown to converge under mild regularity conditions on the $1$-form, but the limit is not identified as a Stratonovich integral.} The present work answers almost entirely the conjecture we made in that paper.

To complete this introduction, let us mention that the study of the Brownian windings started with Spitzer who gave the large time asymptotics of $\theta_X(z)$ \cite{Spitzer}, continued by Yor who gave an explicit expression for the law of $\theta_X(z)$ \cite{Yor} (see also \cite{Mansuy}), and followed by many mathematicians in various settings \cite[...]{Bertoin,Franchi,LeGall2,LeGall3,LeGall4,Pitman,Pitman2,Zhan}.

\section{Notations and  general ideas}
In the following, it is always assumed that $m$ is a non-decreasing integer-valued function of the integer $n$, and that there exists $c>0$ such that $n^c<m\leq n$ for all positive integer $n$. It is also assumed that $T$ is another integer-valued function of $n$, and that $m$ and $T$ are larger than $2$.

Unless otherwise specified, all the Brownian motions are defined from $[0,1]$ to $\R^2$. Under $\mathbb{P}_{x,y}$, $X$ and $Y$ are two independent Brownian motions starting respectively from $x$ and $y$. When it is not necessary to specify these starting points, we simply write $\mathbb{P}=\mathbb{P}_{x,y}$. When we write $\mathbb{E}_{T^{\frac{1}{2}}X_{iT^{-1}}, T^{\frac{1}{2}} Y_{jT^{-1}} }[f(\hat{X},\hat{Y})]$,
it should be understood that $\hat{X},\hat{Y}$ are two independent Brownian motions, defined from $[0,1]$ to $\R^2$, starting respectively from $T^{\frac{1}{2}}X_{iT^{-1}}$ and $T^{\frac{1}{2}} Y_{jT^{-1}}$, and such that
$(\hat{X}-T^{\frac{1}{2}} X_{iT^{-1}},\hat{Y}-T^{\frac{1}{2}} Y_{jT^{-1}})$ is independent from $(X_{iT^{-1}},Y_{jT^{-1}})$.

%We will use the notation $f(n,m)\underset{n\asymp m}\longrightarrow l$ as a shortcut for the logical sentence
%\[ \forall \epsilon>0,\   f(n,m)\underset{\substack{ n^\epsilon\leq m\leq n, n\to \infty }}\longrightarrow l.\]
%The notation $f\overset{L^2}\longrightarrow l$ means $\mathbb{E}[ (f-l)^2]\longrightarrow 0$.

%The integer $T$ will always be given as a power of $m$, or more precisely $T=\lfloor m^\epsilon \rfloor$, for some small but fixed $\epsilon>0$.

For $i\in \{1,\dots, T\}$, we define $X^i$ (resp. $Y^i$) as the restriction of $X$ (resp. $Y$) to the interval $[(i-1)T^{-1},iT^{-1}]$.
We denote by $\theta_X^i(z)$ the integer winding of $X^i$ around $z$.
For two generic curves $\tilde{X}$ and $\tilde{Y}$, we define the sets
\[ %\mathcal{A}^{\tilde{X}}_n=\{z\in \mathbb{R}^2: \theta_{\tilde{X}}(z)= n\},\quad
\mathcal{D}^{\tilde{X}}_n=\{z\in \mathbb{R}^2: \theta_{\tilde{X}}(z)\geq n\},\quad
%\]
%\[
%\mathcal{A}^{{\tilde{X}},{\tilde{Y}}}_{n,m}=\{z\in \mathbb{R}^2: \theta_{\tilde{X}}(z)= n, \theta_{\tilde{Y}}(z)= m\},\quad
\mathcal{D}^{{\tilde{X}},{\tilde{Y}}}_{n,m}=\{z\in \mathbb{R}^2: \theta_{\tilde{X}}(z)\geq n, \theta_{\tilde{Y}}(z)\geq m\}.\]
We replace the superscripts $X^i$ and $X^i,Y^j$ with the superscripts $i$ and $i,j$.

For each of these sets, we replace the curly letter with a straight one to designate its area: for example, $D^i_n=|\mathcal{D}^i_n|=|\mathcal{D}^{X^i}_n|$.

The $2$-dimensional heat kernel is denoted $p_t(x,y)=(2\pi t)^{-1}e^{-\frac{|y-x|^2}{2t}}$, and we write $P_tf$ for $p_t(0,\cdot)*f$.\\

In Section \ref{sec:error}, we will first show that the quantity
\[
\Sigma_{n,m,T}=nm \sum_{i,j=1}^T D^{i,j}_{n,m}
\] is a good approximation of $nmD_{n,m}$. In Section \ref{sec:l2}, we will give an asymptotic estimation of $\Sigma_{n,m,T}$, proving therefore the $L^2$ convergence of $nm D^{X,Y}_{n,m}$. In Section \ref{sec:boot}, we improve the convergence rate. In Section \ref{sec:lp}, we extend the convergence to the $L^p$ and almost sure sense. Finally, Section \ref{sec:mesure} is devoted to the convergences for the measure
$\mu^{X,Y}_{n,m}$.\\

The idea behind our method, that we used previously in \cite{LAWA} and \cite{BWE}, is that the winding $\theta_X(z)$ is equal to the sum of all the windings $\theta^i_X$, plus a piecewise-linear part. When $\theta_X(z)$ is large, it is in general only one of these pieces $X^i$ which have a large winding, so that $\theta_X(z)$ is then roughly equal to $\theta_X^i$. It follows that the set $\mathcal{D}^X_N$ is roughly equal to the union of the sets $\mathcal{D}^i_N$, and that these sets are roughly disjoint, so that a kind of central limit theorem occurs. %This allows to eliminate a lot of randomness.

Following again the ideas introduced in \cite{LAWA}, we first presume that neither the rate at which we can show $\Sigma_{n,m,T}$ to converge, nor the choice of $T$ we take, are actually relevant, as soon as the convergence rate is some power of $T$ and $T$ is some power of $m$. A rather simple procedure ultimately allows to drastically improve this convergent rate. %Besides, knowing such a convergence rate allows to deduce the almost sure convergence no matter how small the powers are.

%Much more relevant should be the error between $nmD_{n,m}$ and $\Sigma_{n,m,T}$.

\section{Comparison between $D_{n,m}$ and $\Sigma_{n,m,T}$}
\label{sec:error}
\subsection{$L^2$ bounds}
The goal of this section is to estimate the difference between $D^{X,Y}_{n,m}$ and $\sum_{i,j=1}^T D^{i,j}_{n,m}$. The method that we use is very similar to the one we used in \cite{LAWA}, but we have drastically simplify some technical steps, in order to deal with a more general situation without going into tremendous computations.
 The cost of these simplification is a slight depreciation of the result which does not spoil its interest. %We present first the most general version, and the particular cases we will use later as a corollary.

We consider here a family of $d$ independent planar Brownian motions $X_1,\dots, X_d$, starting from deterministic points $x_1,\dots, x_d$. %We fix some $\epsilon>0$, and for each $j\in \{1,\dots, d\}$,
We consider a family
$\mathbf{n}_j=(n_{j,1},\dots, n_{j,k_j})$ of integers, an integer $T$ greater than $2$ and such that $\frac{\log (n_{j,i})}{\log (T)}\in[c,c^{-1}]$ for all possible choice of indices $i,j$. We look at the limit when $T$ and the $n_{j,i}$ go to infinity with this condition fullfilled.
We also set, for each $j\in \{1,\dots, d\}$ a collection $\mathbf{i}_j=(i_{j,1},\dots i_{j,k_j })\in \{1,\dots, T\}^{k_j}$ with $i_{j,l}\neq i_{j,l'}$ for $l\neq l'$. We then define
\[
\mathcal{R}^{\mathbf{i}_1,\dots, \mathbf{i}_d}_{\mathbf{n}_1,\dots, \mathbf{n}_d}=\{ z\in \mathbb{\R}^2: \forall j\in \{1,\dots, d\}, \forall l\in \{1,\dots, k_j\}, |\theta^{i_{d,l}}_{X_j}(z)| \geq n_{d,l} \}.
\]
Be careful about the absolute values. We also set $R^{\mathbf{i}_1,\dots, \mathbf{i}_d}_{\mathbf{n}_1,\dots, \mathbf{n}_d}=|\mathcal{R}^{\mathbf{i}_1,\dots, \mathbf{i}_d}_{\mathbf{n}_1,\dots, \mathbf{n}_d}|$.

\begin{proposition}
  \label{prop:l2bound}
  Let $cn>0$, and let $p,d,k_1,\dots, k_d$ be integers. Then, there exists a constant $C$ and a real $q$ such that for all possible choice of integer $T\geq 2$ and families $\mathbf{n}_1, \dots , \mathbf{n}_d$ and $\mathbf{i}_1, \dots , \mathbf{i}_d$ with $\mathbf{n}_j=(n_{j,1},\dots, n_{j,k_j})$, $\mathbf{i}_j=(i_{j,1},\dots, i_{j,k_j})$,
  and such that $\frac{\log(n_i)}{\log(T)}\in [c,c^{-1}]$,
  \[
  \sup_{x_1,\dots x_d \in \R^2}
  \mathbb{E}_{x_1,\dots, x_d}\big[ \big(R^{\mathbf{i}_1,\dots, \mathbf{i}_d}_{\mathbf{n}_1,\dots, \mathbf{n}_d}\big)^p]^{\frac{1}{p}}\leq C \log(T)^{q} T^{-\frac{2}{p}} \prod_{j=1}^d \prod_{l=1}^{k_j} n_{j,l}^{-1}.
  \]
\end{proposition}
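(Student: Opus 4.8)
The plan is to estimate $\mathbb{E}_{x_1,\dots,x_d}\big[\big(R^{\mathbf{i}_1,\dots,\mathbf{i}_d}_{\mathbf{n}_1,\dots,\mathbf{n}_d}\big)^p\big]$ by expanding the $p$-th power as a $p$-fold integral over $\R^2$ and estimating, for fixed points $z_1,\dots,z_p$, the probability that all $p$ points lie in $\mathcal{R}^{\mathbf{i}_1,\dots,\mathbf{i}_d}_{\mathbf{n}_1,\dots,\mathbf{n}_d}$. Thus I would write
\[
\mathbb{E}_{x_1,\dots,x_d}\big[\big(R^{\mathbf{i}_1,\dots,\mathbf{i}_d}_{\mathbf{n}_1,\dots,\mathbf{n}_d}\big)^p\big]=\int_{(\R^2)^p}\mathbb{P}_{x_1,\dots,x_d}\big(\forall r\le p,\ z_r\in \mathcal{R}^{\mathbf{i}_1,\dots,\mathbf{i}_d}_{\mathbf{n}_1,\dots,\mathbf{n}_d}\big)\,\d z_1\cdots\d z_p.
\]
Because the motions $X_1,\dots,X_d$ are independent and the constraint factorises over $j$, the probability inside splits into a product over $j$ of the probability that $X_j$ winds at least $n_{j,l}$ times (in absolute value) around each $z_r$ on its time-interval $[(i_{j,l}-1)T^{-1},i_{j,l}T^{-1}]$, for $l=1,\dots,k_j$. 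Since the intervals indexed by distinct $l$ are disjoint, a further conditioning on the positions of $X_j$ at the interval endpoints makes these $k_j$ events independent (each becomes an event for a fresh Brownian bridge/path on an interval of length $T^{-1}$), so everything reduces to one basic building block: for a single Brownian motion $B$ run for time $t=T^{-1}$, started anywhere, bound $\mathbb{P}(|\theta_B(z)|\ge n)$, and more importantly its iterated version over several target points $z_r$.

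The key single-motion input is a quantitative winding tail bound of the form $\sup_{x}\int_{\R^2}\mathbb{P}_x(|\theta_B(z)|\ge n)\,\d z\le C\,t\,n^{-1}$, reflecting Werner's $\tfrac{1}{2\pi n}$ heuristic, with the factor $t$ coming from Brownian scaling ($\theta$ is scale-invariant, areas scale like $t$). For the $p$-point version I would iterate: conditionally on $z_1,\dots,z_{r-1}$ being wound around, estimate the conditional probability that $z_r$ is also wound around, paying a factor $C n^{-1}$ each time while the spatial integral over $z_r$ contributes the length-scale $t$; logarithmic losses $\log(T)^q$ are expected to appear from the standard dyadic decomposition of $\R^2$ into scales (separating $z_r$ near the path, at unit distance, and far away) and from summing such estimates over all the building blocks. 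This yields, for each $j$, a bound of order $(\log T)^{q_j} t^{p}\prod_l n_{j,l}^{-p}$ after the $p$ spatial integrations are distributed; taking the product over $j$ and extracting the global $t^{p}=T^{-p}$... wait, one must be careful that the $p$ spatial integrations are shared across the $d$ factors, not repeated, so the correct accounting gives $T^{-2}$ per spatial integration variable only once — the net effect being $T^{-2}$ total (i.e. $T^{-2/p}$ after taking the $p$-th root) times $\prod_{j,l}n_{j,l}^{-1}$, which is exactly the claimed estimate; the exponent $q$ of the logarithm is then some explicit function of $p,d,k_1,\dots,k_d$.

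The main obstacle is the $p$-point winding estimate: controlling $\mathbb{P}_x(z_1,\dots,z_p\text{ all wound around by }B\text{ on }[0,t])$ uniformly in $x$ and in the configuration of the $z_r$'s, with the right power $n^{-p}$ and only a polylogarithmic loss. The difficulty is that the events $\{|\theta_B(z_r)|\ge n\}$ are far from independent when the $z_r$ are close together, so the iteration must be organised around the geometry of the point configuration (grouping points at comparable distances, using that $\theta_B$ is roughly locally constant at small scales so nearby points contribute essentially a single winding constraint, and that far-apart points require the path to make many loops in separated regions, which is costly). This is precisely the step that was done by "tremendous computations" in \cite{LAWA}; here the claimed simplification is to accept the extra $\log(T)^q$ factors and the slightly suboptimal $T^{-2/p}$ instead of chasing sharp constants, which should make a relatively soft dyadic/Markov-property argument sufficient. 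Once this block estimate is in hand, assembling the proposition is a bookkeeping exercise in Fubini, independence across $j$, and the Markov property across the disjoint time-intervals indexed by $l$.
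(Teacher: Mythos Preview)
Your skeleton matches the paper's proof: expand the $p$-th moment as an integral over $(\R^2)^p$, factorise over $j$ by independence, use the Markov property along each $X_j$ to peel off the successive time-intervals, and reduce to the $p$-point winding function $f_n(\mathfrak{z})=\mathbb{P}_0(\forall r\le p,\ |\theta(z_r)|\ge n)$ for a unit-time Brownian motion. Two points deserve comment.

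\medskip
\noindent\textbf{The missing Hölder step.} After the Markov-property reduction, the integrand is a product of $k=\sum_j k_j$ factors of the form $\sup_a P_{\cdot}f_{n_{j,l}}(\sqrt{T}\mathfrak{z}-a)$, and you correctly note that the $p$ spatial integrations are \emph{shared} across all these factors. Your accounting at this point is hesitant (``wait, one must be careful\dots''), and indeed this is exactly where the paper's simplification lives: one applies Hölder with exponents $(k,\dots,k)$ to split the single integral of a $k$-fold product into a product of $k$ separate $L^k$-norms. The scaling $\mathfrak{z}'=\sqrt{T}\mathfrak{z}$ then extracts the power of $T$, and the heat semigroup contracts $L^k$, so each factor is bounded by $\|f_{n_{j,l}}\|_{L^k}$. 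Without this step one is left trying to integrate a product of non-independent factors directly, which is precisely the ``tremendous computations'' the paper says it avoids. This is the one concrete idea you should add.

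\medskip
\noindent\textbf{The $p$-point estimate.} You identify this as the main obstacle and propose to obtain $f_n(\mathfrak{z})\lesssim n^{-p}$ by iterating: condition on $\{|\theta(z_1)|\ge n,\dots,|\theta(z_{r-1})|\ge n\}$ and bound the conditional probability for $z_r$. This is not how the bound is obtained, and it is unlikely to work as stated, because conditioning on large winding around several points is a path-wise constraint that does not leave a tractable Markovian structure for the next point. The paper simply \emph{quotes} the needed estimate from \cite[Sublemma 2.2]{LAWA}: for $\mathfrak{z}$ outside a thin ``collision'' set $E_\beta$ (some $z_r$ within $T^{-\beta}$ of another $z_{r'}$ or of the origin) one has $f_n(\mathfrak{z})\le C\log(n)^p n^{-p}$, together with a global Gaussian tail $f_n(\mathfrak{z})\le 4\exp(-\max_r|z_r|^2/4)$. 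One then bounds $\|f_n\|_{L^k}$ by splitting $(\R^2)^p$ into $E_\beta\cap B(0,\log T)^p$, its complement in $B(0,\log T)^p$, and the exterior; this is the source of the polylogarithmic loss. So rather than your dyadic/iterative scheme, the argument is: cite the two pointwise bounds, integrate over the three regions, and choose $\beta$ large.

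\medskip
In short: your route is the paper's route, but you are missing the Hölder inequality that decouples the $(j,l)$-factors, and you should replace the proposed iterative derivation of the multipoint bound by an appeal to the pointwise estimates on $f_n$ already established in \cite{LAWA}.
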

\begin{proof}
  We arrange the families to have $i_{j,1}<\dots <i_{j,k_j}$.
  For $\mathfrak{z}=(z_1,\dots, z_p)\in (\R^2)^p$, and $n\in \mathbb{N}$, let $f_{n}(\mathfrak{z})= \mathbb{P}_0(\forall j\in \{1,\dots, p\}, \theta(z_j)\geq n)$.
  Let also $P_t f (\mathfrak{z})=\int_{\R^2} p_t(0,y) f(\mathfrak{z}-y) \d y$, where $\mathfrak{z}-y=(z_1-y,\dots,z_p-y )$.

  Then, \[\mathbb{P}_x(\theta^1(z)\geq n)= \mathbb{P}_0(\theta(T^{\frac{1}{2}}(z-x))\geq n )=f_n(T^{\frac{1}{2}}(z-x)  ),\]
  \[
  \mathbb{P}_x(\theta^i(z)\geq n)=P_{i-1}f_n(T^{\frac{1}{2}}(z-x) ),\]
  and more generally
  \[
  \mathbb{P}_x(\forall j\in\{1,\dots, p\},  \theta^i(z_j)\geq n )=
  P_{i-1}f_{n}(T^{\frac{1}{2}}(\mathfrak{z}-x) ).
  \]
  Let $k=\sum_{j=1}^d k_j$.
  Then,
  \begin{align}
  \mathbb{E}_{x_1,\dots, x_d}&[ \big(R^{\mathbf{i}_1,\dots, \mathbf{i}_d}_{\mathbf{n}_1,\dots, \mathbf{n}_d}\big)^p]
  = \int_{(\R^2)^p} \prod_{j=1}^d \mathbb{P}_{x_j}\big(\forall l \in \{1,\dots, k_j\}, \forall q\in \{1,\dots,p\}, \    \theta_{X_j}^{i_{j,l}}(z_q)\geq n_{j,l}\big) \d \mathfrak{z}\nonumber\\
  &=\int_{(\R^2)^p} %\prod_{j=1}^d \mathbb{P}_{x_j}( \forall q\in \{1,\dots,p\}, \    \theta^{i_{j,1}}(z_q)\geq n\big)
  \prod_{j=1}^d \prod_{l=1}^{k_j}
  \mathbb{P}_{x_j}( \forall q\in \{1,\dots,p\}, \    \theta^{i_{j,l}}(z_q)\geq n_{j,l}     \big|\forall l'<l,  \    \theta^{i_{j,l'}}(z_q)\geq n_{j,l'}       \big)  \d \mathfrak{z}\nonumber\\
  &\leq \int_{(\R^2)^p}
  \prod_{j=1}^d \prod_{l=1}^{k_j}
  \sup_{a\in \R^2} \mathbb{P}_{x_j}( \forall q\in \{1,\dots,p\}, \    \theta^{i_{j,l}}(z_q)\geq n_{j,l}     \big|X_{1,i_{j,l-1}T^{-1}}=a \big)  \d \mathfrak{z}\nonumber\\
  &= \int_{(\R^2)^p}\prod_{j=1}^d \prod_{l=1}^{k_j} \sup_{a\in \R^2} P_{i_{j,l}-i_{j,l-1}-1 }f_{n_{j,l}}(\sqrt{T} \mathfrak{z}-a ) \d \mathfrak{z}\nonumber\\
  &\leq \prod_{j=1}^d \prod_{l=1}^{k_j} \Big(\int_{(\R^2)^p} \sup_{a\in \R^2} \big(P_{i_{j,l}-i_{j,l-1}-1 }f_{n_{j,l}}(\sqrt{T} \mathfrak{z}-a )\big)^k \d \mathfrak{z} \Big)^{\frac{1}{k}}
  \label{eq:bad1}\\
  &=\prod_{j=1}^d \prod_{l=1}^{k_j} \Big(T^{-2p} \int_{(\R^2)^p} \big(P_{i_{j,l}-i_{j,l-1}-1 }f_{n_{j,l}}( \mathfrak{z}' )\big)^k \d \mathfrak{z}' \Big)^{\frac{1}{k}}\nonumber\\
  &=T^{-2}\prod_{j=1}^d \prod_{l=1}^{k_j} \big\|P_{i_{j,l}-i_{j,l-1}-1 }f_{n_{j,l}} \big\|_{L^k} %\d \mathfrak{z}' \Big)^{\frac{1}{k}}\\
  \leq T^{-2}\prod_{j=1}^d \prod_{l=1}^{k_j} \big\|f_{n_{j,l}} \big\|_{L^k}.\label{eq:bad2}
  \end{align}
  We fix a positive real number $\beta$, and we set \[E_\beta=\{\mathfrak{z}=(z_1,\dots, z_p) \in (\R^2)^p :  \min \{|z_i|,|z_i-z_j|: i\neq j \}\leq T^{-\beta}.\] We have shown in \cite[Sublemma 2.2]{LAWA} that $f$ admits the following bounds, for some constant $C$ that depends only on $\beta$ and $c$:
  \begin{itemize}
  \item
  $f_n(\mathfrak{z} )\leq C\log(n)^pn^{-p}$ for $\mathfrak{z} \notin E_\beta$,
  \item $f_n(\mathfrak{z})\leq 4 \exp\Big( - \frac{\max\{ |z_i|^2:i\in \{1,\dots, p\} }{4} \Big)$.
  \end{itemize}
  By decomposing $(\R^2)^p$ into the disjoint union of $E_\beta \cap B(0,\log(T))^p$, $B(0,\log(T))^p\setminus E_\beta$, and $(\R^2)^p\setminus B(0,\log(T))^p$, we deduce that
  \[ \|f_n\|_{L^k}\leq C (\log(T)^{\frac{2p-2}{q}}T^{-\frac{2\beta}{q}} + \log(T)^2 \log(n)^p n^{-p}+ T^{-\frac{\log(T)}{4}}).\]
  For $\beta$ sufficiently large, and with $n$ ranging over the $n_{i,j}$ so that $\frac{\log(n)}{\log(T)}$ is bounded, this reduces to $\|f_n\|_{L^k}\leq log(T)^{2p+2} n^{-p}$, and we end up with
  \[
  \mathbb{E}_{x_1,\dots, x_d}[ \big(R^{\mathbf{i}_1,\dots, \mathbf{i}_d}_{\mathbf{n}_1,\dots, \mathbf{n}_d}\big)^p]\leq \log(T)^{2kp+2k} T^{-2}\prod_{j=1}^d \prod_{l=1}^{k_j} n_{j,l}^{-p},\]
  as announced.
\end{proof}
Remark that we were loose at two places, on lines \eqref{eq:bad1} and \eqref{eq:bad2}. By being more subtle, we should normally have extra factors $(i_{j,l}-i_{j,l-1}+1)^{-1}$, but the proof should be much longer to get them.

We also present a similar bound in which we are not considering small pieces, but the whole trajectories. We will use this bound a lot in the last sections.
For $X_1,\dots, X_d$ independent planar Brownian motions starting from $x_1,\dots, x_d$, and positive integers $n_1,\dots,n_d$, let
\[\mathcal{R}^{X_1,\dots,X_d}_{n_1,\dots, n_d}=\{ z\in \R^2: \forall j\in \{1,\dots, d\}, |\theta_{X_j}(z)|\geq n_i\},\]
and $R^{X_1,\dots,X_d}_{n_1,\dots, n_d}=|\mathcal{R}^{X_1,\dots,X_d}_{n_1,\dots, n_d}|$.
\begin{lemma}
  \label{le:globalBound}
  For all positive integer $p$, there exists $c,C$ such that for all integers $n_1,\dots, n_d\geq 2$ and $x_1,\dots x_d\in \R^2$,
  \[
  \mathbb{E}_{x_1,\dots, x_d}[(R^{X_1,\dots,X_d}_{n_1,\dots, n_d})^p]^{\frac{1}{p}}\leq C \log(n_1\dots n_d)^c n_1^{-1}\dots n_d^{-1}.
  \]
\end{lemma}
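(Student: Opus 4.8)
The plan is to reduce Lemma \ref{le:globalBound} to Proposition \ref{prop:l2bound} by a scaling argument. First I would observe that by Brownian scaling, if I replace each $X_j$ by $T^{1/2}X_j(\cdot/T)$ — a Brownian motion run for time $T$ rather than time $1$ — the winding set around a point $z$ becomes the winding set around $T^{1/2}z$, and its area scales by $T^{-2}$. More precisely, for a fixed integer $T\geq 2$, write $X_j = \bigcup_{i=1}^T X_j^i$ as the concatenation of the $T$ pieces of the rescaled motion. The key geometric fact, already used in \cite{LAWA} and recalled in the "general ideas" section, is that the winding of the whole curve around $z$ equals the sum of the windings of the pieces plus a piecewise-linear correction that is bounded (say by $C$ in absolute value, uniformly), because the correction is a sum of $T$ winding increments of straight segments, each contributing a bounded angle — actually one must be slightly careful and control the piecewise-linear part, but up to a factor it contributes at most a constant number of extra turns per piece.

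Second, I would use this decomposition to bound $R^{X_1,\dots,X_d}_{n_1,\dots,n_d}$ by a sum over tuples of indices. If $z$ is a point around which $X_j$ winds at least $n_j$ times, then $\sum_{i=1}^T |\theta^i_{X_j}(z)| \geq n_j - C$, so by pigeonhole there is at least one index $i$ with $|\theta^i_{X_j}(z)| \geq (n_j-C)/T \geq n_j/(2T)$ for $n_j$ large; but this bound loses too much. The better route, matching the target $n_j^{-1}$, is to split according to \emph{how} the total winding $n_j$ is distributed: either one piece carries almost all of it (order $n_j$), or it is genuinely spread out, in which case several pieces each carry a macroscopic amount and we gain extra smallness from the product in Proposition \ref{prop:l2bound}. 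So I would dyadically decompose: for each $j$ and each threshold $2^a$, sum over the (at most $T$) indices $i$ with $|\theta^i_{X_j}(z)|\in[2^a,2^{a+1})$, and apply Proposition \ref{prop:l2bound} with the $n_{j,l}$ being these dyadic values. Summing the resulting $\sum_a \#\{\text{pieces}\}\cdot 2^{-ap}\cdots$ geometric-type series, and using that $\log$ factors in $T$ and in the dyadic $n$'s are all $\lesssim \log(n_1\cdots n_d)^{c}$ once $T$ is chosen as a fixed power of $\max_j n_j$ (as permitted by the hypothesis $\log(n_{j,l})/\log T \in [c,c^{-1}]$), yields the bound $C\log(n_1\cdots n_d)^c\, T^{-2/p}\cdot T^{-2/p'}\cdots n_1^{-1}\cdots n_d^{-1}$ — wait, the $T^{-2/p}$ from the proposition must be undone by the $T^{-2}$ scaling of the area; indeed the proposition already produces $T^{-2/p}$, and the area rescaling contributes a compensating $T^{2/p}$ in $L^p$-norm, so the $T$-powers cancel and only the $\log(T)^q \lesssim \log(n_1\cdots n_d)^c$ survive.

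Third, and more carefully, I would take $T$ to be, say, $\lceil (n_1\cdots n_d)^{10}\rceil$ or some convenient fixed power of $\max_j n_j$ large enough that, after the dyadic sum, the number of terms ($\lesssim T^d \cdot (\log T)^d$ crude count, but effectively only $\lesssim (\log T)^{d}$ after the pigeonhole that forces nonempty dyadic classes, together with the $T^{-2/p}$ gain per class) is absorbed, and the "pieces with $|\theta^i|\leq C$" contribute nothing (winding at least $1$, say, so their contribution to the event requires $n_j\leq C$, a trivial range handled separately). The role of requiring $n_j\geq 2$ is exactly to stay in the regime where $\log n_j$-type bounds are meaningful.

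The main obstacle I expect is bookkeeping the combinatorics of the dyadic/pigeonhole decomposition so that the number of index-tuples one sums over does not overwhelm the $T^{-2/p}$ gain and the final $n_j^{-1}$ rate is clean — in particular making sure the $\log$-powers end up as $\log(n_1\cdots n_d)^c$ for a single constant $c$ depending only on $p$ and $d$, rather than as $\log T$ to a $T$-dependent power. A secondary subtlety is the treatment of the bounded piecewise-linear correction: one must verify it really is uniformly bounded (or at worst contributes a bounded number of turns per piece, which is still fine since it only shifts each dyadic class by $O(1)$), so that "$X_j$ winds $\geq n_j$ times" genuinely forces the pieces to collectively wind $\gtrsim n_j$ times. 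Once these two points are handled, everything else is the routine scaling and Hölder bookkeeping already present in the proof of Proposition \ref{prop:l2bound}.
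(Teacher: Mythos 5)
The paper's proof of Lemma~\ref{le:globalBound} is \emph{direct} and does not pass through any piece decomposition or time-splitting parameter $T$: it factorizes $\mathbb{E}[(R)^p]$ over $j$ using independence of the $X_j$, rewrites each factor as $f_{n_j}(z_1,\dots,z_p)$, applies the generalized H\"older inequality to separate the $d$ factors, and then bounds each $\|f_{n_j}\|_{L^d}$ using the two pointwise estimates on $f_n$ quoted from \cite[Sublemma~2.2]{LAWA} (the $\log(n)^p n^{-p}$ bound away from the diagonal, and the Gaussian tail). The ``$T=n_1\cdots n_d$'' appearing in the paper's proof is just notation to reuse the same three-way decomposition of $(\R^2)^p$ that was used in the proof of Proposition~\ref{prop:l2bound}; no Brownian piece-splitting occurs. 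Your plan instead tries to \emph{reduce} Lemma~\ref{le:globalBound} to Proposition~\ref{prop:l2bound} by artificially chopping each $X_j$ into $T$ pieces and summing over dyadic pigeonhole classes, which is a genuinely different and considerably harder route.

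There are concrete gaps in that route. First, Proposition~\ref{prop:l2bound} requires $\log(n_{j,l})/\log(T)\in[c,c^{-1}]$ for \emph{every} index, so it simply does not apply to the dyadic classes $|\theta^i_{X_j}(z)|\in[2^a,2^{a+1})$ with $2^a$ of order $1$ or of order $T^{o(1)}$ — and those low classes are the ones with the most candidate pieces. Second, the sum over index tuples $\mathbf{i}_j\in\{1,\dots,T\}^{k_j}$ produces a factor $T^{\sum_j k_j}$, while the proposition only gives you $T^{-2/p}\prod n_{j,l}^{-1}$; you nowhere explain how the former is absorbed, and the cancellation you assert (``the $T$-powers cancel'') is not the relevant one — the $T^{-2/p}$ comes from the fact that each piece lives on a time window of length $T^{-1}$, which has nothing to do with rescaling the full trajectory, and you have $d\cdot k_j$ pieces worth of index sums to pay for. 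Third, the piecewise-linear winding correction is of order $T$, not bounded; you notice this late (``one must be slightly careful'') but the earlier pigeonhole statement $\sum_i|\theta^i_{X_j}(z)|\geq n_j-C$ with a uniform $C$ is what the rest of the argument leans on, and it is false as written. More fundamentally, the true ingredient is the $L^k$ bound on $f_n$, which is what drives \emph{both} the proposition and the lemma; invoking the proposition rather than the $f_n$ bounds themselves is working at the wrong level and creates all of the above problems while solving none.
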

\begin{proof}
  This is similar to but simpler than the previous proof. We have
  \begin{align}
  \mathbb{E}_{x_1,\dots, x_d}[(R^{X_1,\dots,X_d}_{n_1,\dots, n_d})^p]
&  = \int_{(\R^2)^p} \prod_{j=1}^d \mathbb{P}_{x_j}\big( \forall q\in \{1,\dots,p\}, \    \theta_{X_j}(z_q)\geq n_{j}\big) \d z_1\dots \d z_d\nonumber\\
  &=\int_{(\R^2)^p}
  \prod_{j=1}^d f_{n_j}(z_1,\dots, z_d)\d z_1\dots \d z_d\\
  &\leq \prod_{j=1}^d  \Big(\int_{(\R^2)^p}
  f_{n_j}(z_1,\dots, z_d)^d \d z_1\dots \d z_d\Big)^{\frac{1}{d}}.
  \end{align}
  Decomposing $(\R^2)^p$ as in the previous proof, with $T=n_1\dots n_d$, and with $\beta$ sufficiently small, we get $\|f_{n_j}\|_{L^d}\leq \log(T)^{p+2} n_j^{-p}$, from which the lemma follows.
\end{proof}

We now summon some inclusions from \cite[Equations (24) and (25)]{LAWA}. We invite our reader to understand these inclusions by themself rather than by looking the formal proof in \cite{LAWA}, which is not very enlightening.\footnote{Remember that an additional factor $T$ come from the piecewise-linear part of the path, $\theta^{pl}=\theta_X-\sum_i \theta^i$.} The meaning of these inclusions is simply that for a sum to be large, some of the summands must be large.
\begin{lemma}
  Let $n,p,T$ be such that $\frac{n}{3}>T(p+1)$. Then,
  \begin{align*}
  \sum_{i=1}^T \mathcal{D}^i_{n+T(p+1)}\setminus \bigcup_{\substack{i,j=1\\i\neq j}}^T  R^{\{i,j\}}_{\{\frac{n}{3},p\} }
  \subseteq\mathcal{D}^X_n\subseteq
  \sum_{i=1}^T \mathcal{D}^i_{n-T(p+1)}\cup \bigcup_{\substack{i,j=1\\i\neq j}}^T  R^{\{i,j\}}_{\{\frac{n}{3},p\} } \cup  \bigcup_{\substack{i,j,k=1\\i\neq j\neq k\neq i}}^T  R^{\{i,j,k\}}_{\{p,p,p\} }.
\end{align*}
\end{lemma}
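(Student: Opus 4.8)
The plan is to prove the two inclusions separately, and in each case to exploit the decomposition $\theta_X = \sum_{i=1}^T \theta^i_X + \theta^{pl}_X$, where $\theta^{pl}_X$ is the winding of the piecewise-linear path obtained by concatenating the chords joining the successive subdivision points $X_{(i-1)T^{-1}}$. The key elementary facts are: (i) this decomposition is an identity of integers away from the (Lebesgue-null) union of the ranges of all the pieces and chords, valid for $z$ in the complement; and (ii) the piecewise-linear path consists of $T$ segments, so around any point $z$ its winding is controlled: more precisely, for any $p$, the set of $z$ where $|\theta^{pl}_X(z)| \geq T(p+1)$ has a geometric description that, combined with a pigeonhole argument among the $T$ chords, forces one of a bounded number of "bad" events. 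This is exactly the content of \cite[Equations (24), (25)]{LAWA}, and I would cite it rather than reprove it.

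For the right-hand (upper) inclusion: suppose $z \in \mathcal{D}^X_n$, i.e. $\theta_X(z) \geq n$. Write $\theta_X(z) = \sum_i \theta^i_X(z) + \theta^{pl}_X(z)$. First case: $|\theta^{pl}_X(z)| \leq T(p+1)$. Then $\sum_i \theta^i_X(z) \geq n - T(p+1)$, so by pigeonhole at least one index $i$ has $\theta^i_X(z) \geq (n - T(p+1))/T$; but I want the cleaner bound $\theta^i_X(z) \geq n - T(p+1)$, which follows because if \emph{no} $\theta^i_X(z)$ reaches $n - T(p+1)$ then... — actually the correct dichotomy, and the reason the statement is phrased with the $R$-sets, is: either exactly one piece is "dominant" (giving $z \in \mathcal{D}^i_{n-T(p+1)}$), or at least two pieces are simultaneously "moderately large" (giving $z \in R^{\{i,j\}}_{\{n/3,p\}}$ for some $i \neq j$, using that if two pieces both have winding $\geq p$ and the total is $\geq n$, one of them is in fact $\geq n/3$ by a counting argument on the number of large pieces), or three pieces are each at least $p$ (the last term). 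Second case: $|\theta^{pl}_X(z)| > T(p+1)$; here the geometric structure of the $T$-segment path forces $z$ to be encircled by at least two of the chords a controlled number of times, which after bounding the chord-windings by the relevant piece-windings again lands in one of the $R$-sets. I would assemble these cases to get the stated union.

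For the left-hand (lower) inclusion the logic is reversed and simpler: if $z \in \mathcal{D}^i_{n+T(p+1)}$ for some $i$ and $z \notin \bigcup_{i \neq j} R^{\{i,j\}}_{\{n/3,p\}}$, then the single piece $X^i$ winds at least $n + T(p+1)$ times around $z$, while every \emph{other} piece $X^j$ winds at most $p$ times in absolute value around $z$ (else $z$ would lie in some $R^{\{i,j\}}$, using $n/3 \leq n+T(p+1)$ to absorb the dominant piece into the $R$-set on the other coordinate — this is where the hypothesis $n/3 > T(p+1)$ is used to keep the thresholds consistent), so $\sum_{j \neq i} |\theta^j_X(z)| \leq T p$, and $|\theta^{pl}_X(z)| \leq T(p+1)$ trivially since... no — the piecewise-linear part still needs a bound, but on the set where no two chords wind much it is at most $T$, hence in total $\theta_X(z) \geq (n + T(p+1)) - Tp - T \geq n$. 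I would double-check the exact bookkeeping of the $T(p+1)$ versus $Tp + T$ terms.

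\textbf{Main obstacle.} The genuinely delicate point is the control of the piecewise-linear part $\theta^{pl}_X$ and the pigeonhole/counting step that converts "the total winding is $\geq n$ and no single piece dominates" into "at least two pieces are each $\geq n/3$, or three are each $\geq p$": getting the constants ($n/3$, the three-piece alternative, the shift $T(p+1)$) to line up is exactly the combinatorial heart of \cite[Eqs.~(24)--(25)]{LAWA}. Rather than reconstruct that argument — which, as the excerpt itself notes, "is not very enlightening" — I would present the intuitive picture (a large sum needs large summands; a $T$-segment polygon can only wind a point many times if several of its edges cooperate) and defer the verification of the precise inclusions to the cited equations, checking only that the numerology ($n/3 > T(p+1)$, the exponents of $p$) matches what the downstream sections require.
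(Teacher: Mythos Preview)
Your approach is precisely what the paper does: it does not prove this lemma at all but simply cites \cite[Equations (24) and (25)]{LAWA}, adding only the remark that the formal proof there ``is not very enlightening'' and inviting the reader to grasp the inclusions intuitively (with the footnote that the extra $T$ in the shift $T(p+1)$ comes from the piecewise-linear part $\theta^{pl}$). Your plan to sketch the pigeonhole intuition and then defer to the cited equations is therefore exactly aligned with the paper, and in fact goes a little further than the paper itself.

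The two places where your sketch wobbles --- the bound on $\theta^{pl}_X$ in the lower inclusion (where you write ``no ---'') and the precise trichotomy in the upper inclusion --- are real loose ends in your heuristic, but since both you and the paper ultimately delegate the bookkeeping to \cite{LAWA}, there is no gap relative to the paper. For a cleaner mental picture: the piecewise-linear closed loop has $T+1$ segments, each subtending an angle strictly less than $\pi$ from any point off it, so $|\theta^{pl}_X(z)|\le T$ always; this is what gets absorbed into the shift $T(p+1)$. And the trichotomy behind the upper inclusion is: count the indices $i$ with $|\theta^i_X(z)|>p$; if there is at most one, that single piece must carry winding at least $n-(T-1)p-T\ge n-T(p+1)$; if there are exactly two, then since the remaining $T-2$ pieces and $\theta^{pl}$ together contribute less than $T(p+1)<n/3$, one of the two large pieces has winding at least $n/3$, landing $z$ in some $R^{\{i,j\}}_{\{n/3,p\}}$; if there are three or more, $z$ lies in some $R^{\{i,j,k\}}_{\{p,p,p\}}$.
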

Using this decomposition on both $\mathcal{D}^X_n$ and $\mathcal{D}^Y_m$, and using the inclusion-exclusion principle, we deduce
\begin{corollary}
  \label{coro:encadrement}
  Let $n,m,p,q,T$ be such that $\frac{n}{3}>T(p+1)$ and $\frac{m}{3}>T(q+1)$. Then,
  \[
  \sum_{i,j=1}^T D^{i,j}_{n+T(p+1), m+T(q+1)} - R_1
  \leq
  D^X_{n,m}\leq
  \sum_{i,j=1}^T D^{i,j}_{n-T(p+1), m-T(q+1)}+ R_2
  \]
  where
  \begin{align*}
  R_1&
  =%\sum_{i,j=1}^T \mathcal{D}^{i,j}_{n-T(p+1), m-T(q+1)}
  \sum_{\substack{i,j,k\\i\neq j }}  R^{\{i,j\},\{k\}}_{\{\frac{n}{3},p\},\{  m+T(q+1) \} }
  + \sum_{\substack{i,j,k\\j\neq k }}  R^{\{i\},\{j,k\}}_{\{  n+T(p+1) \},\{\frac{m}{3},q\}, }
  +\sum_{\substack{i,j,k,l\\ i\neq j, \\ k\neq l}}  R^{\{i,j\},\{k,l\}}_{\{\frac{n}{3},p\},\{\frac{m}{3},q\} }\\
\shortintertext{and}
  R_2&
  =%\sum_{i,j=1}^T \mathcal{D}^{i,j}_{n-T(p+1), m-T(q+1)}
  \ \ \sum_{\substack{i,j,k\\i\neq j }}\ \   R^{\{i,j\},\{k\}}_{\{\frac{n}{3},p\},\{  m-T(q+1) \} }
  \ + \ \sum_{\substack{i,j,k\\j\neq k }}  R^{\{i\},\{j,k\}}_{\{  n-T(p+1) \},\{\frac{m}{3},q\}, }
  +\sum_{\substack{i,j,k,l\\ i\neq j\neq k\neq i}}  R^{\{i,j,k\},\{l\}}_{\{p,p,p\},\{  m-T(q+1) \} }\\&
  +\sum_{\substack{i,j,k,l\\ j\neq k\neq l\neq j}}  R^{\{i\},\{j,k,l\}}_{\{n-T(p+1)\},\{ q,q,q\} }
  +\sum_{\substack{i,j,k,l\\ i\neq j, \\ k\neq l}}  R^{\{i,j\},\{k,l\}}_{\{\frac{n}{3},p\},\{\frac{m}{3},q\} }
  +\sum_{\substack{i,j,k,l,r\\ i\neq j,\\ k\neq l\neq r\neq k}}  R^{\{i,j\},\{k,l,r\}}_{\{\frac{n}{3},p\},\{q,q,q\} }\\&
  +\sum_{\substack{i,j,k,l,r\\ i\neq j\neq k\neq i, \\ l\neq r}}  R^{\{i,j,k\},\{l,r\}}_{\{p,p,p\},\{\frac{m}{3},q\} }
  +\sum_{\substack{i,j,k,l,r,s\\ i\neq j\neq k\neq i, \\ l\neq r\neq s\neq l}}  R^{\{i,j,k\},\{l,r,s\}}_{\{p,p,p\},\{q,q,q\} }.
  \end{align*}
\end{corollary}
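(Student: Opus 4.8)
The corollary is a purely set-theoretic consequence of the lemma above. The plan is to apply that lemma once to $X$, with the pair of thresholds governed by $(n,p)$, and once to $Y$, with $(m,q)$, to intersect the two double inclusions (using $\mathcal{D}^{X,Y}_{n,m}=\mathcal{D}^X_n\cap\mathcal{D}^Y_m$), and to turn every union, set difference and intersection into an inequality between Lebesgue measures. Write $\mathcal{U}=\bigcup_{i=1}^T\mathcal{D}^i_{n+T(p+1)}$, $\mathcal{U}'=\bigcup_{i=1}^T\mathcal{D}^i_{n-T(p+1)}$, $\mathcal{E}_X=\bigcup_{i\neq j}R^{\{i,j\}}_{\{\frac{n}{3},p\}}$, $\mathcal{F}_X=\bigcup_{i\neq j\neq k\neq i}R^{\{i,j,k\}}_{\{p,p,p\}}$ (all built from the pieces of $X$), and define $\mathcal{V},\mathcal{V}',\mathcal{E}_Y,\mathcal{F}_Y$ likewise from $Y$ with $(m,q)$ in place of $(n,p)$. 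The lemma reads $\mathcal{U}\setminus\mathcal{E}_X\subseteq\mathcal{D}^X_n\subseteq\mathcal{U}'\cup\mathcal{E}_X\cup\mathcal{F}_X$, and similarly for $Y$, so
\[(\mathcal{U}\cap\mathcal{V})\setminus(\mathcal{E}_X\cup\mathcal{E}_Y)\subseteq\mathcal{D}^{X,Y}_{n,m}\subseteq(\mathcal{U}'\cup\mathcal{E}_X\cup\mathcal{F}_X)\cap(\mathcal{V}'\cup\mathcal{E}_Y\cup\mathcal{F}_Y).\]

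For the lower bound, the key observation is that the sets $C_{i,k}:=(\mathcal{D}^i_{n+T(p+1)}\cap\mathcal{D}^k_{m+T(q+1)})\setminus(\mathcal{E}_X\cup\mathcal{E}_Y)$ are pairwise disjoint: if $i\neq i'$ then $\mathcal{D}^i_{n+T(p+1)}\cap\mathcal{D}^{i'}_{n+T(p+1)}\subseteq R^{\{i,i'\}}_{\{\frac{n}{3},p\}}\subseteq\mathcal{E}_X$ (using $n+T(p+1)\geq\frac{n}{3}$ and $n+T(p+1)\geq p$), and symmetrically if $k\neq k'$ one lands in $\mathcal{E}_Y$ — this is the rigorous form of the heuristic that the pieces $\mathcal{D}^i_N$ are essentially disjoint once $N$ is large. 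Since $\bigcup_{i,k}C_{i,k}=(\mathcal{U}\cap\mathcal{V})\setminus(\mathcal{E}_X\cup\mathcal{E}_Y)\subseteq\mathcal{D}^{X,Y}_{n,m}$, disjointness gives $D^{X,Y}_{n,m}\geq\sum_{i,k}|C_{i,k}|$, and for each $(i,k)$ one has $|C_{i,k}|\geq D^{i,k}_{n+T(p+1),m+T(q+1)}-|(\mathcal{D}^i_{n+T(p+1)}\cap\mathcal{D}^k_{m+T(q+1)})\cap\mathcal{E}_X|-|(\mathcal{D}^i_{n+T(p+1)}\cap\mathcal{D}^k_{m+T(q+1)})\cap\mathcal{E}_Y|$. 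A short case distinction — whether or not the pair of $X$-pieces coming from $\mathcal{E}_X$ contains the piece $i$, using $\frac{n}{3}>p$ — shows the first correction term lies in $\bigcup_{c\neq i}R^{\{i,c\},\{k\}}_{\{\frac{n}{3},p\},\{m+T(q+1)\}}$, so summing over $i,k$ it is at most the first sum in $R_1$; likewise the second correction is at most the second sum in $R_1$. As $R_1$ also contains a further non-negative sum, this gives the left inequality.

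For the upper bound, distribute the right-hand intersection displayed above: $\mathcal{D}^{X,Y}_{n,m}$ is contained in the union of the nine products $A\cap B$ with $A\in\{\mathcal{U}',\mathcal{E}_X,\mathcal{F}_X\}$ and $B\in\{\mathcal{V}',\mathcal{E}_Y,\mathcal{F}_Y\}$, so by subadditivity $D^{X,Y}_{n,m}\leq\sum_{A,B}|A\cap B|$. The term $|\mathcal{U}'\cap\mathcal{V}'|$ is at most $\sum_{i,k}D^{i,k}_{n-T(p+1),m-T(q+1)}$ (subadditivity again), and each of the eight remaining mixed products is absorbed by one crude inclusion: intersecting a block of $X$-pieces (a single one from $\mathcal{U}'$, a pair from $\mathcal{E}_X$, or a triple from $\mathcal{F}_X$) with a block of $Y$-pieces produces an $R$-set with exactly the corresponding superscripts, e.g.\ $\mathcal{D}^i_{n-T(p+1)}\cap R^{\{j,k\}}_{\{\frac{m}{3},q\}}\subseteq R^{\{i\},\{j,k\}}_{\{n-T(p+1)\},\{\frac{m}{3},q\}}$ or $R^{\{i,j,k\}}_{\{p,p,p\}}\cap R^{\{l,r\}}_{\{\frac{m}{3},q\}}\subseteq R^{\{i,j,k\},\{l,r\}}_{\{p,p,p\},\{\frac{m}{3},q\}}$. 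Summing over the free indices with their constraints, the eight products give precisely the eight sums defining $R_2$, which is the right inequality.

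There is no conceptual obstacle here: the whole argument is set algebra together with subadditivity and the disjointness remark. The only real work is bookkeeping — carrying the index constraints ($i\neq j$, $i\neq j\neq k\neq i$, and so on) correctly through the nine products and through the case distinction in the lower bound, and checking that each crude inclusion lands in the $R$-family prescribed by the definitions of $R_1$ and $R_2$ (in particular that every threshold downgrade, such as $n+T(p+1)\to\frac{n}{3}$ or $n+T(p+1)\to p$, is numerically valid, which is exactly where the hypotheses $\frac{n}{3}>T(p+1)$, $\frac{m}{3}>T(q+1)$ and $T\geq2$ enter). All the inequalities are deliberately lossy — a union bounded by the sum of its pieces, $m+T(q+1)$ bounded below by $\frac{m}{3}$ or $q$ — and this costs nothing: as announced at the start of Section~\ref{sec:error} and exploited in Section~\ref{sec:l2}, Proposition~\ref{prop:l2bound} will bound each of these $R$-sums by a fixed negative power of $T$ times the main term, so the precise exponents and constants are irrelevant.
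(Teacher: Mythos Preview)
Your argument is correct and is exactly the approach the paper has in mind: apply the lemma separately to $X$ and to $Y$, intersect, and use elementary set algebra and subadditivity of Lebesgue measure (what the paper calls ``the inclusion--exclusion principle''). The disjointness observation for the cleaned pieces $C_{i,k}$ in the lower bound, and the distribution of the nine cross-products in the upper bound, are precisely the bookkeeping the paper leaves implicit.
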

From Proposition \ref{prop:l2bound}, we obtain that for all $\epsilon>0$ and $r>1$, there exists $C,c$ such that for all $n,m,p,q,T$ with $v\geq \sqrt{T}$ and $v^{\epsilon}<T$ for $v\in \{n,m,p,q\}$, and with $Tp<n$ and $Tq<m$,
\[\mathbb{E}[R_1^r ]^{\frac{1}{r} }\leq C\log(T)^c T^{-\frac{2}{r}} \big( T^3n^{-1}m^{-1}p^{-1}+T^3n^{-1}m^{-1}q^{-1} \big),\]
and
\[\mathbb{E}[R_2^r ]^{\frac{1}{r} }\leq C\log(T)^c T^{-\frac{2}{r}} \big( T^3n^{-1}m^{-1}p^{-1}+T^3n^{-1}m^{-1}q^{-1}+ T^4p^{-3} m^{-1}+T^4n^{-1}q^{-3}\big).\]

In particular, taking $p=\sqrt{n}$ and $q=\sqrt{m}$, we have
\begin{proposition}
\label{prop:encadrement}
  For all $r>1$, there exists $C,c$ such that for all $m\leq n$,
  \[\sum_{i,j=1}^T D^{i,j}_{n+T(\sqrt{n}+1), m+T(\sqrt{m}+1)} - R\leq D^{X,Y}_{n,m}\leq\sum_{i,j=1}^T D^{i,j}_{n-T(\sqrt{n}+1), m-T(\sqrt{m}+1)}+ R,\]
  with
  \[\mathbb{E}[R^r]^{\frac{1}{r}} \leq  C\log(T)^c T^{3 -\frac{2}{r}} n^{-1}m^{-\frac{3}{2}}.\]
  Alternatively,
  \[D^{X,Y}_{n+T(\sqrt{n}+1),m+T(\sqrt{m}+1)}- R\leq\sum_{i,j=1}^T D^{i,j}_{n, m}\leq D^{X,Y}_{n-T(\sqrt{n}+1),m-T(\sqrt{m}+1)}+ R.\]
\end{proposition}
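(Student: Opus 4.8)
The plan is to obtain the statement by specialising Corollary~\ref{coro:encadrement} to $p=\lfloor\sqrt n\rfloor$ and $q=\lfloor\sqrt m\rfloor$. With these choices $T(p+1)\le T(\sqrt n+1)$ and $T(q+1)\le T(\sqrt m+1)$, so under the standing hypotheses ($m\le n$, $T$ a fixed power of $m$) the conditions $n/3>T(p+1)$ and $m/3>T(q+1)$ of that corollary hold for $n$ large; moreover the ratios $\log p/\log T$, $\log q/\log T$, $\log(n/3)/\log T$ and $\log(m/3)/\log T$ stay in a fixed compact subinterval of $(0,\infty)$, which is exactly what Proposition~\ref{prop:l2bound} needs in order to apply to every summand of $R_1$ and $R_2$ with a constant not depending on $n$. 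Since $N\mapsto D^{i,j}_N$ is non-increasing, replacing $D^{i,j}_{n\pm T(p+1),\,m\pm T(q+1)}$ by $D^{i,j}_{n\pm T(\sqrt n+1),\,m\pm T(\sqrt m+1)}$ only weakens the two-sided inclusion of the corollary, which therefore holds with $R:=R_1+R_2$.

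It then remains to estimate $\mathbb E[R^r]^{1/r}\le\mathbb E[R_1^r]^{1/r}+\mathbb E[R_2^r]^{1/r}$ (Minkowski). Each of $R_1$, $R_2$ is a sum of at most $O(T^6)$ terms of the form $R^{\mathbf i_1,\mathbf i_2}_{\mathbf n_1,\mathbf n_2}$, Proposition~\ref{prop:l2bound} (applied with exponent $r$) bounds the $L^r$-norm of each by $C\log(T)^c T^{-2/r}$ times the product of the corresponding inverse winding thresholds, and a structural type using $k$ free indices in $\{1,\dots,T\}$ therefore contributes, again via Minkowski, at most $C\log(T)^c T^{k-2/r}$ times that product. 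Summing these contributions and simplifying with $p,q\ge\sqrt T$, $Tp<n$ and $Tq<m$ reproduces the two estimates recorded just before the statement, namely $\mathbb E[R_1^r]^{1/r}\le C\log(T)^c T^{-2/r}\big(T^3n^{-1}m^{-1}p^{-1}+T^3n^{-1}m^{-1}q^{-1}\big)$ and its four-term analogue for $R_2$. Finally, inserting $p=\sqrt n$, $q=\sqrt m$ and using $m\le n$ --- so that $n^{-3/2}m^{-1}\le n^{-1}m^{-3/2}$, and so that the relation tying $T$ to $m$ makes the $T^4$ contributions subdominant --- gives $\mathbb E[R^r]^{1/r}\le C\log(T)^c T^{3-2/r}n^{-1}m^{-3/2}$.

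The ``alternative'' reformulation follows in the same way, by applying Corollary~\ref{coro:encadrement} at the shifted arguments $(n+T(\sqrt n+1),\,m+T(\sqrt m+1))$ for the upper bound and $(n-T(\sqrt n+1),\,m-T(\sqrt m+1))$ for the lower bound, but still with parameters $p=\sqrt n$ and $q=\sqrt m$, chosen so that the shifts cancel exactly and the bound on $R$ is unchanged up to the constant. The substance having already been packaged in Proposition~\ref{prop:l2bound} and in the combinatorial inclusions behind Corollary~\ref{coro:encadrement}, I do not anticipate a genuine obstacle here; the one delicate point is the bookkeeping of the error terms, namely checking that the $O(T^6)$ summands involving four, five or six free indices --- which a priori scale like $T^4$ or worse --- are each dominated, after the substitution $p=\sqrt n$, $q=\sqrt m$, by the announced $C\log(T)^c T^{3-2/r}n^{-1}m^{-3/2}$, which is where the constraints $p,q\ge\sqrt T$, $Tp<n$, $Tq<m$, $m\le n$ and (where needed) the fact that $T$ is a sufficiently small power of $m$ are all brought to bear.
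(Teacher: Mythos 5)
Your proposal follows the paper's route exactly: specialise Corollary~\ref{coro:encadrement} to $p\sim\sqrt n$, $q\sim\sqrt m$, use monotonicity of $N\mapsto D^{i,j}_N$ to pass to the cleaner thresholds $n\pm T(\sqrt n+1)$, $m\pm T(\sqrt m+1)$, bound each summand of $R_1,R_2$ via Proposition~\ref{prop:l2bound} together with Minkowski's inequality, and obtain the alternative reformulation by shifting the arguments before applying the corollary.

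One remark on the ``delicate point'' you flagged. After substituting $p=\sqrt n$, $q=\sqrt m$ into the displayed bound for $R_2$, the term $T^4 n^{-1}q^{-3}$ becomes $T^4 n^{-1}m^{-3/2}$, which exceeds the announced $T^{3}n^{-1}m^{-3/2}$ by exactly a factor of $T$ irrespective of how small a power of $m$ the parameter $T$ is; it is not subdominant. So one really obtains $\mathbb E[R^r]^{1/r}\le C\log(T)^c T^{4-2/r}n^{-1}m^{-3/2}$ rather than $T^{3-2/r}$. This appears to be a minor slip carried over from the paper: the constant $T^6$ in the error term of Corollary~\ref{coro:boot}, which comes from $\mathbb E[(nmR)^2]$ with $r=2$, is consistent with the exponent $T^{4-2/r}$ rather than $T^{3-2/r}$, and every downstream use takes $T$ to be a small power of $m$, so the extra $T$ is harmless. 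Your write-up inherits the same imprecision, but the substance and the route of the argument are the paper's.
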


\section{Asymptotic in $L^2$ for the sum }
\label{sec:l2}
Our goal in this section is roughly to show that $\Sigma_{n,m,T}$ is equal to $\frac{\ell^{X,Y}(\R^2)}{4\pi^2}$, plus an error term which has a small $L^2$-norm when $n,m,T$ are large. Let us remark that in this section, we derive the convergence for $D^{X,Y}_{n,m}$, but we do not get the convergence rate announced in the theorem: this will be done in Section \ref{sec:boot}. In fact, apart from Subsection \ref{subsub}, the content of this section is \emph{not} necessary to obtain the convergence with the good convergence rate. The reader could skip directly to Subsection \ref{subsub} and then to Section
\ref{sec:boot}, but we think that the strategy in this section gives a different light on the subject than the proofs in Section
\ref{sec:boot}. In fact, it is even difficult to understand the precise role played by  $\ell^{X,Y}(\R^2)$ by the reading of Section \ref{sec:boot} alone.
Besides, some of the results given here concern properties of the intersection measure that we haven't found elsewhere and they might be of independent interest.

Our strategy consist on giving meaning to the following successive estimations, where $L:\R^2 \to \R_+$ is a function that will be determined later.
\begin{align*}
nm \sum_{i,j=1}^T D^{i,j}_{n,m}& \simeq nm \sum_{i,j=1}^T  \mathbb{E}[D^{i,j}_{n,m}|X_{(i-1)T^{-1}},Y_{(j-1)T^{-1}}] & \mbox{(Lemma \ref{le:tech1})}\\
&\simeq \sum_{i,j=1}^T T^{-1} L_{\sqrt{T} ( Y_{(j-1)T^{-1}}- X_{(i-1)T^{-1}} )} & \mbox{(Lemma \ref{le:tech2})}\\
&\simeq \int_0^1 \int_0^1 T L_{\sqrt{T}(Y_t-X_s)} \d s\d t &\mbox{(Corollary \ref{coro:tech3})}\\
&\simeq \ell^{X,Y}(\R^2)\ \int_{\R^2} L_z \d z. & \mbox{(Lemma \ref{le:tech4})}
\end{align*}
Remark that already after the second step, we have eliminated the dependency in $n$ and $m$, and the winding do not appear anymore after that point.

\subsection{Limitation of the noise}
The goal of this subsection is to prove the first in the series of approximations presented above.
\begin{lemma}
\label{le:tech1}
  For all $c,\epsilon>0$, there exists a constant $C$ such that for all $n,m,T$ with $T<m\leq n<T^{c^{-1}}$, for all $x,y\in \R^2$,
  \[\mathbb{E}_{x,y}\Big[\Big(T^{\frac{1}{2}-\epsilon} nm \sum_{i,j=1}^T (D^{i,j}_{n,m}-\mathbb{E}[D^{i,j}_{n,m}|X_{(i-1)T^{-1}},Y_{(j-1)T^{-1}}]) \Big)^2\Big]\leq C.\]
\end{lemma}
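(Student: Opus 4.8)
The plan is to expand the square and use the variance structure. Write $\Delta_{i,j} = D^{i,j}_{n,m} - \mathbb{E}[D^{i,j}_{n,m}\mid X_{(i-1)T^{-1}}, Y_{(j-1)T^{-1}}]$, so that the quantity to estimate is $(T^{\frac{1}{2}-\epsilon} nm)^2 \mathbb{E}[(\sum_{i,j}\Delta_{i,j})^2]$. Expanding, one must control $\sum_{i,j,i',j'} \mathbb{E}[\Delta_{i,j}\Delta_{i',j'}]$. The key observation is that $D^{i,j}_{n,m}$ depends only on the increments of $X$ on $[(i-1)T^{-1}, iT^{-1}]$ and of $Y$ on $[(j-1)T^{-1}, jT^{-1}]$ together with the starting points $X_{(i-1)T^{-1}}, Y_{(j-1)T^{-1}}$; the conditional expectation subtracts off exactly the part measurable with respect to those starting points. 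So I would condition on the $\sigma$-algebra generated by all the grid values $(X_{kT^{-1}})_{k}$ and $(Y_{kT^{-1}})_k$: given this, the pieces $X^i$ are independent and the pieces $Y^j$ are independent, hence $\mathbb{E}[\Delta_{i,j}\Delta_{i',j'}\mid \mathcal{G}] = 0$ unless $i=i'$ and $j=j'$ (when $i\ne i'$, say, the two factors involve independent $X$-pieces and each has conditional mean zero). Therefore only the diagonal survives:
\[
\mathbb{E}\Big[\Big(\sum_{i,j}\Delta_{i,j}\Big)^2\Big] = \sum_{i,j=1}^T \mathbb{E}[\Delta_{i,j}^2] \leq \sum_{i,j=1}^T \mathbb{E}[(D^{i,j}_{n,m})^2].
\]

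It then remains to bound $\mathbb{E}_{x,y}[(D^{i,j}_{n,m})^2]$ uniformly in $x,y$ and in the indices $i,j$. This is precisely the kind of estimate delivered by Proposition \ref{prop:l2bound} (or its single-piece specialization): applying it with $d=2$, $k_1=k_2=1$, $p=2$, the winding thresholds $n,m$, and the appropriate rescaling by $\sqrt T$ for the pieces $X^i, Y^j$, one gets
\[
\mathbb{E}_{x,y}[(D^{i,j}_{n,m})^2] \leq C\log(T)^{q}\, T^{-2}\, n^{-2} m^{-2}
\]
uniformly in $x,y,i,j$, using that $\frac{\log n}{\log T}, \frac{\log m}{\log T}$ are bounded because $T<m\leq n<T^{c^{-1}}$ (so the logarithmic factors are harmless). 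Summing over the $T^2$ pairs $(i,j)$ gives $\sum_{i,j}\mathbb{E}[(D^{i,j}_{n,m})^2] \leq C\log(T)^q T^{-2} \cdot n^{-2}m^{-2}\cdot T^2 \cdot T^{-2}$; wait — more carefully, $\sum_{i,j}$ contributes a factor $T^2$ against the $T^{-2}$, leaving $C\log(T)^q T^{-2} n^{-2}m^{-2}$. Hence
\[
(T^{\frac{1}{2}-\epsilon} nm)^2 \sum_{i,j}\mathbb{E}[\Delta_{i,j}^2] \leq C\log(T)^q\, T^{1-2\epsilon}\, T^{-2} = C\log(T)^q\, T^{-1-2\epsilon},
\]
which is bounded (indeed tends to $0$), absorbing the $\log(T)^q$ into any power of $T$. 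This establishes the claim.

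The main obstacle is making the conditional-independence step airtight: one must be careful that $D^{i,j}_{n,m} = |\mathcal{D}^{X^i,Y^j}_{n,m}|$ genuinely depends only on the $i$-th $X$-piece and $j$-th $Y$-piece (as curves, which includes their starting points), so that conditioning on the full grid $\mathcal{G}$ renders the family $\{D^{i,j}_{n,m}\}_{i,j}$ into a product-independent array in the two indices separately; then the cross terms vanish by the tower property. A secondary point is verifying that Proposition \ref{prop:l2bound} applies with the stated scaling — the pieces $X^i$ are Brownian motions on an interval of length $T^{-1}$, so rescaling time by $T$ and space by $\sqrt T$ turns them into standard Brownian motions on $[0,1]$ and turns the winding sets $\mathcal{D}^{i,j}_{n,m}$ into $T^{-1}$ times a rescaled set, which is exactly the normalization built into that proposition (hence the $T^{-2}$ from the area scaling in $\R^2$). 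Everything else is routine bookkeeping with the logarithmic factors.
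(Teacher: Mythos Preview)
Your argument has two genuine gaps.

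\textbf{The orthogonality claim fails.} You assert that conditioning on the grid $\mathcal{G}=\sigma((X_{kT^{-1}})_k,(Y_{kT^{-1}})_k)$ makes $\mathbb{E}[\Delta_{i,j}\mid\mathcal{G}]=0$. It does not: given $\mathcal{G}$, the piece $X^i$ is a Brownian \emph{bridge} from $X_{(i-1)T^{-1}}$ to $X_{iT^{-1}}$, so $\mathbb{E}[D^{i,j}_{n,m}\mid\mathcal{G}]$ is a function of all four endpoint values $X_{(i-1)T^{-1}},X_{iT^{-1}},Y_{(j-1)T^{-1}},Y_{jT^{-1}}$, whereas the term you subtract, $\mathbb{E}[D^{i,j}_{n,m}\mid X_{(i-1)T^{-1}},Y_{(j-1)T^{-1}}]$, uses only the two starting points. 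These differ, so $\mathbb{E}[\Delta_{i,j}\mid\mathcal{G}]\neq 0$ and the cross terms do not vanish. This is precisely why the paper first works with the auxiliary quantity $P^{i,j}_{n,m}=D^{i,j}_{n,m}-\mathbb{E}[D^{i,j}_{n,m}\mid X_{(i-1)T^{-1}},Y_{(j-1)T^{-1}},Y_{jT^{-1}}]$ (note the extra endpoint in the conditioning), for which a Markov-property argument genuinely kills the terms with $i\neq k$; the terms with $i=k$, $j\neq l$ then require a separate estimate showing decay like $|j-l|^{-1}$, and a final $L^2$-projection step removes the extra variable $Y_{jT^{-1}}$.

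\textbf{The diagonal bound is too weak, and there is an arithmetic slip.} Proposition~\ref{prop:l2bound} with $p=2$ gives $\mathbb{E}[(D^{i,j}_{n,m})^2]\le C\log(T)^{q}T^{-2}n^{-2}m^{-2}$ uniformly in $i,j$; summing over the $T^2$ pairs yields $C\log(T)^{q}\,n^{-2}m^{-2}$, \emph{not} $C\log(T)^{q}T^{-2}n^{-2}m^{-2}$ as you write. Multiplying by $(T^{\frac12-\epsilon}nm)^2=T^{1-2\epsilon}n^2m^2$ then gives $C\log(T)^{q}T^{1-2\epsilon}$, which diverges. The missing factor $T^{-1}$ comes from an observation you do not use: $D^{i,j}_{n,m}$ vanishes unless $X_{(i-1)T^{-1}}$ and $Y_{(j-1)T^{-1}}$ are within distance of order $T^{-1/2}$, an event of probability roughly $(i+j)^{-1}$. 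Disintegrating on this event replaces $\sum_{i,j}1\sim T^2$ by $\sum_{i,j}(i+j)^{-1}\sim T\log T$, which recovers the required extra power of $T$.
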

\begin{remark}
  {\normalfont Many of the proofs we will present have the same structure, which therefore deserve a general explanation. We expend the square into a sum over four indices $i,j,k,l$, and we split this sum in four parts: \[ \{i=k,j=l\}, \ \{i=k \mbox{ or } j=l \}, \{ i<k,j<l \mbox{ or } i>k, j<l \}, \]
  \[\{ i<k,j>l \mbox{ or } i>k, j>l \}.\]

  It will be useful to keep track of how many powers of $T$ we should be able to save, and how many of them we need to save. For example, if we consider the sum with all the four indices $i= k,j\neq l$, we need save $3$ powers of $T$ so that the sum does not diverge, plus some extra power so that it actually goes to zero sufficiently fast. We will save a factor $T^2$ from scaling. One should come from the fact that $D^{i,j}_{n,m}$ has an extremely large probability to be $0$, unless $|X_{(i-1)T^{-1}}-Y_{(j-1)T^{-1}}|$ is of the order of $T^{-\frac{1}{2}}$, which happens with probability about $T^{-1}$ (or $(i+j)^{-1}$ around $x=y$).
  The fourth one comes from the fact that the correlation between
  $D^{i,j}_{n,m}$ and $D^{i,l}_{n,m}$ decays at least as $|l-j|^{-1}$, which in computations is seen by the apparition of a kernel $p_{(l-j-1)T^{-1}}$ which is integrated over a small ball of area about $T^{-1}$.

  In some of the proofs, the contribution from the fourth part of the sum is easily shown to be equal to $0$. This, however, is not always the case, and we then have to get this factor $|l-j|^{-1}$, as well as a factor $|k-l|^{-1}$. The difficulty, in the case $i<k-1,j<l-1$ for example, is that the natural thing to do is to disintegrate with respect to the variables $X_{(i-1)T^{-1}},Y_{(j-1)T^{-1}}$, or $X_{(k-1)T^{-1}},Y_{(l-1)T^{-1}}$, but in both case we end up having to deal with a Brownian \emph{bridge} instead of a Brownian motion. Because we are lacking, for Brownian bridges, the nice estimates in the mean that we have for the Brownian motion (see Corollary \ref{coro:werner} in particular), one must avoid the apparition of such a bridge, or control how far it is from a Brownian motion, which can become tedious.

  To be perfectly rigorous, we should also deal separately with the expressions when two indices differs by exactly $1$ (e.g. $i=k+1$), or when one of them is equal to $1$ (e.g. $i=1$). Yet, these cases are always treated identically to the other ones, except for the two following things: some heat kernels such as $p_{(k-i-1)T^{-1}}$ or $p_{(i-1)T^{-1}}$ degenerate, so that they should be interpreted as Dirac measures. The steps that consist in bounding the integral on some balls  of such a kernel, by bounding the kernel itself by its maximum, should simply be replaced by the operation of bounding the integral directly by $1$.
  }
\end{remark}
\begin{proof}[Proof of Lemma \ref{le:tech1}]
  We assume $x=0$. Set \[P^{i,j}_{n,m}=D^{i,j}_{n,m}-\mathbb{E}[D^{i,j}_{n,m}|X_{(i-1)T^{-1}},Y_{(j-1)T^{-1}},Y_{jT^{-1}}].\]
  We first show that $ nm \sum_{i,j=1}^T P^{i,j}_{n,m}$ goes to $0$ sufficiently fast.
  %We will treat separately the three kinds of products $P^{i,j}_{n,m}P^{k,l}_{n,m}$ with $i\neq k$, $P^{i,j}_{n,m}P^{i,l}_{n,m}$ with $|j-l|>1$, and $P^{i,j}_{n,m}P^{i,l}_{n,m}$ with $|i-l|\geq 1$.

  \diams Assume $i>k$ and $j\neq l$.
  Let $\sigma_i=\sigma((X_{hT^{-1}})_{h\leq i-1}, (Y_{hT^{-1}})_{h\in \{0,\dots, T\} }, (Y_t)_{t\in [(l-1)T^{-1}, lT^{-1}]} )$. Then, from the Markov property of the Brownian motion,
  \[\mathbb{E}[ D^{i,j}_{n,m}|\sigma_i]=\mathbb{E}[ D^{i,j}_{n,m}|X_{(i-1)T^{-1}}, Y_{(j-1)T^{-1}}, Y_{jT^{-1}}],\]
  and it follows that $\mathbb{E}[P^{i,j}_{n,m}|\sigma_i]=0$.

  Besides, since $i>k$, $P^{k,l}_{n,m}$ is $\sigma_i$-measurable, so that
  \[
  \mathbb{E}[P^{i,j}_{n,m} P^{k,l}_{n,m}]=\mathbb{E}[\mathbb{E}[ P^{i,j}_{n,m}| \sigma_i]P^{k,l}_{n,m} ]=0.
  \]
  The same results of course also apply to $i<k$, $j\neq l$.

  %
  % When both $i\neq k$ and $j\neq l$, $D^{i,j}_{n,m}$ is independent from $D^{k,l}_{n,m}$ conditional to $\sigma_T$. It follows that
  % \[\mathbb{E}[D^{i,j}_{n,m}|\sigma_T]\mathbb{E}[D^{k,l}_{n,m}|\sigma_T]=\mathbb{E}[D^{i,j}_{n,m}D^{k,l}_{n,m}|\sigma_T],\]
  % from which we easily deduce that
  % \begin{equation}
  % \mathbb{E}[P^{i,j}_{n,m}P^{k,l}_{n,m}]=0.
  % \end{equation}
  \diams Let us now look at $\mathbb{E}[(P^{i,j}_{n,m})^2]$, which is smaller than $2\mathbb{E}[(D^{i,j}_{n,m})^2]$. Let $\bar{Y}:[0,2]\to \mathbb{R}^2$ be a Brownian motion that extends $Y$, in the sense that $\bar{Y}_t=Y_t$ for $t\in[0,1]$. We then extend the definition of $Y^i$ to
  $i\in \{T+1,\dots, 2T\}$ by setting
  $Y^i=\bar{Y}_{|(i-1)T^{-1},iT^{-1}}$.
  Then, for $i,j\in \{0,\dots, T\}$, $D^{i,j}_{n,m}$ is equal in distribution to $D^{1,j+i-1}_{n,m}$ (including for $j+i-1>T$), so we can restrict ourselves to study the case $i=1$.

  % First, for $i=j=1$, a simple scaling argument gives
  % \[ \mathbb{E}_{0,0}[(D^{i,j}_{n,m})^2]=T^{-2}\mathbb{E}[|\mathcal{D}^X_n\cap \mathcal{D}^Y_m|^2]\sim C T^{-2}n^{-1}m^{-1}.\]
  % If we had a similar bound for the approximately $T^2$ other couples $(i,j)$, it would not be sufficient to prove that there sum is a $o(n^{-1}m^{-1})$, so we need to take advantage from the fact that
  % $X_{(i-1)T^{-1}}$ and $Y_{(i-1)T^{-1}}$ are far appart in that case.

  Let $\alpha=\frac{1}{2}-\epsilon$ and $n_X=\|X\|_{\mathcal{C}^\alpha, [0,T^{-1}]}$ be the $\alpha$-H\"older norm of $X$ restricted to the interval $[0,T^{-1}]$, and $n^j_Y=\|Y\|_{\mathcal{C}^\alpha, [(j-1)T^{-1},jT^{-1}]}$.

   We remark that $\D^{1,j}_{n,m}$ is included on the intersection of the two balls $B_X=B(0, T^{-\alpha} n_X )$ and
  $B^j_Y=B(Y_{(j-1)T^{-1}}, T^{-\alpha} n^j_Y )$, which must therefore must be non-empty for $D^{i,j}_{n,m}$ to be non-zero.
  Thus, we have, for arbitrary $\epsilon>0$ and $p>1$,
  \begin{align*}
  \mathbb{E}[(P^{1,j}_{n,m})^2]
  &\leq 2 \mathbb{E}[|B_X|^2 \mathbbm{1}_{n_X\geq T^\epsilon }  ]+2 \mathbb{E}[|B_Y|^2 \mathbbm{1}_{n^j_Y\geq T^\epsilon }  ]
  +2
  \mathbb{E}[(D^{1,j}_{n,m})^2 \mathbbm{1}_{n_X\leq T^{\epsilon}, n^j_Y\leq T^{\epsilon}}]\\
  &\leq 4 \pi^2T^{-4\alpha}\mathbb{E}[\|X\|_{\mathcal{C}^\alpha}^2   \mathbbm{1}_{\|X\|_{\mathcal{C}^\alpha}\geq T^\epsilon}]
  +2 \mathbb{E}[(D^{1,j}_{n,m})^2 \mathbbm{1}_{n_X\leq T^{\epsilon}, n^j_Y\leq T^{\epsilon}}]\\
  &\leq 4\pi^2T^{-4\alpha} \mathbb{E}[\|X\|_{\mathcal{C}^\alpha}^4]^{\frac{1}{2}}\mathbb{E}[\|X\|_{\mathcal{C}^\alpha}^p]^{\frac{1}{2}}T^{-\frac{p\epsilon}{2}}
  +2 \mathbb{E}[(D^{1,j}_{n,m})^2 \mathbbm{1}_{n_X\leq T^{\epsilon}, n^j_Y\leq T^{\epsilon}}].
  \end{align*}
  Since $T$ is more than a positive power of $n\vee m$, we can choose $p$ such that $T^{-\frac{p\epsilon}{2} }=o(n^{-2}m^{-2}T^{-2-4\alpha})$.

  In order to control the last term, we apply a scaling and we disintegrate with respect to the value of $Y_{(j-1)T^{-1}}$. For $t\in[0,1]$, let $\tilde{X}_t= \sqrt{T} X_{tT^{-1}}$ and $\tilde{Y}_t= \sqrt{T} Y_{(j-1+t)T^{-1}}$. Then, $\|\tilde{X}\|_{\mathcal{C}^\alpha}=T^{\frac{1}{2}-\alpha} n_X=T^{\epsilon} n_X$.

  For $j>1$,
  \begin{align*}
  \mathbb{E}_{0,y}[(D^{1,j}_{n,m})^2 \mathbbm{1}_{n_X\leq T^{\epsilon}, n^j_Y\leq T^{\epsilon}} ]
  &= \int_{\mathbb{R}^2} p_{(j-1)T^{-1}}(y,z)\mathbb{E}_{0,y}\big[(D^{1,j}_{n,m})^2 \mathbbm{1}_{n_X\leq T^{\epsilon}, n^j_Y\leq T^{\epsilon}}\big| Y_{(j-1)T^{-1}}=z \big]\d z \\
  &= \int_{\mathbb{R}^2} p_{(j-1)T^{-1}}(y,z)\mathbb{E}_{0,z}\big[(D^{1,1}_{n,m})^2 \mathbbm{1}_{n_X\leq T^{\epsilon}, n^1_Y\leq T^{\epsilon}} \big]\d z \\
  &\leq \int_{\mathbb{R}^2} p_{(j-1)T^{-1}}(y,z) \mathbbm{1}_{z\leq T^{\epsilon} T^{-\alpha}} \mathbb{E}_{0,z}\big[(D^{1,1}_{n,m})^2 \big]\d z\\
  &\leq \frac{T^{-1+4\epsilon} }{2(j-1)T^{-1}} \sup_{z\in \R^2}  \mathbb{E}_{0,z}\big[(D^{1,1}_{n,m})^2 \big]\\
  &\leq \frac{T^{-3+4\epsilon} }{2(j-1)T^{-1}} \sup_{z\in \R^2}  \mathbb{E}_{0,z}\big[(D^{X,Y}_{n,m})^2 \big]\\
  &\leq C \log(T)^c \frac{T^{-3+4\epsilon} }{2(j-1)T^{-1}} n^{-2}m^{-2}.
  \end{align*}
  For the last inequality, we used Lemma \ref{le:globalBound}.

  If follows that \[\sum_{j=1}^{2T} \mathbb{E}_{0,y}[(D^{1,j}_{n,m})^2 \mathbbm{1}_{n_X\leq T^{\epsilon}, n^j_Y\leq T^{\epsilon}} ]\leq C' \log(T)^{c+1} T^{-1+4\epsilon} n^{-2}m^{-2}\leq C'' T^{-1+5\epsilon} n^{-2}m^{-2}.\]
  %
  % Since $y\neq 0$, for $T$ sufficiently large, $|y-z|\geq |y|/2$ for all $z\in B(0,T^{-\frac{1}{2}+\epsilon})$, and then
  % $p_t(y,z)\leq 2 \pi^{-1} |y|^{-1}$. Then,
  % \begin{align*}
  % \mathbb{E}_{0,y}[(D^{1,j}_{n,m})^2 \mathbbm{1}_{n_X\leq T^{\epsilon}, n^j_Y\leq T^{\epsilon}} ]
  % &\leq 2 \pi^{-1} |y|^{-1} T^{-1+4\epsilon}  \sup_z \mathbb{E}_{0,z}\big[(D^{1,1}_{n,m})^2 \big]\\
  % &=2 \pi^{-1} |y|^{-1} T^{-3+4\epsilon} \sup_z \mathbb{E}_{0,z}\big[(D^{X,Y}_{n,m})^2 \big]\\
  % &\leq C  |y|^{-1} \log(nm)^k T^{4\epsilon-3} n^{-2}m^{-2},
  % \end{align*}
  % so that finally
  % \[ \mathbb{E}_{0,y}[(P^{i,j}_{n,m})^2]\leq C' \log(nm)^k T^{4\epsilon-3} n^{-2}m^{-2}.\]
  From Cauchy--Schwarz inequality, we also get a similar bound on the expectations $\mathbb{E}_{0,y}[P^{i,j}_{n,m}P^{i,j+1}_{n,m}]$
  %This bound is sufficient for the approximately $T^2$ possible choices of $(i,j)$, but it is not sufficient for the approximately $T^3$ possible choices of  $(i,j,l)$ in

  \diams We now consider $\mathbb{E}_{0,y}[P^{i,j}_{n,m}P^{i,l}_{n,m}]$, assuming $l\geq j+2$.
  Once again, we can assume $i=1$. We set $\bar{D}^{1,j}_{n,m}=\mathbb{E}[D^{1,j}_{n,m}|Y_{(j-1)T^{-1}}, Y_{jT^{-1}}] $.
  An elementary computation gives
  \[\mathbb{E}[P^{1,j}_{n,m}P^{1,l}_{n,m} ]\leq \mathbb{E}[D^{1,j}_{n,m}D^{1,l}_{n,m}]+
  \mathbb{E}[\bar{D}^{1,j}_{n,m}\bar{D}^{1,l}_{n,m}]
  %= \mathbb{E}[D^{1,j}_{n,m}D^{1,l}_{n,m}]-\mathbb{E}[\mathbb{E}[D^{1,j}_{n,m}|\sigma_T ]\mathbb{E}[D^{1,l}_{n,m}|\sigma_T ]]\leq \mathbb{E}[D^{1,j}_{n,m}D^{1,l}_{n,m}]
  .\]
  We treat the first expression, the second one can be bounded in an identical way.

  For all $p$, there exists $C_p$ such that for all $n$,
  \[
  \mathbb{E}[D^{1,j}_{n,m}D^{1,l}_{n,m}]\leq \mathbb{E}[D^{1,j}_{n,m}D^{1,l}_{n,m}\mathbbm{1}_{\max(n_X,n_Y,n_Y')\leq T^\epsilon}]+ C_p T^{p\epsilon},
  \]

  For $t\in[0,1]$, we set
  \[
  U=Y_{(j-1)T^{-1}}, \quad V=Y_{(l-1)T^{-1}}-Y_{jT^{-1}},\]
  \[\hat{X}_t=\sqrt{T} X_{tT^{-1}}-x, \quad \hat{Y}_t=\sqrt{T}(Y_{(j-1+t)T^{-1}}-U-y),\quad \hat{Z}_t=\sqrt{T}( Y_{(l-1+t)T^{-1}}-V-\hat{Y}_1-U-y   ).\]
  Then, $\hat{X}$, $\hat{Y}$ and $\hat{Z}$ are three Brownian motions starting from $0$, the five random variables $(U,V,\hat{X},\hat{Y},\hat{Z})$ are independent, and we have
  \[
  D^{1,j}_{n,m}D^{1,l}_{n,m}=
  T^{-2}| \D^{\hat{X}}_n\cap (\D_m^{\hat{Y}}+(x+U)\sqrt{T}     )|| \D^{\hat{X}}_n\cap (\D_m^{\hat{Z}}+(x+U+V)\sqrt{T}+\hat{Y}_1 )|.
  \]
  Disintegrating with respect to $(U,V)$, we get
  \begin{align*}
  \mathbb{E}_{0,y}[D^{1,j}_{n,m}D^{1,l}_{n,m}\mathbbm{1}_{\max(n_X,n_Y,n_Y')\leq T^\epsilon}]
  &=T^{-2}\int_{\mathbb{R}^2\times \mathbb{R}^2}
  p_{(j-1)T^{-1}}(x,u) p_{(l-j-1)T^{-1}}(0,v)\\
  &\hspace{-3.5cm}
  \mathbb{E}_{0,y}[| \D^{\hat{X}}_n\cap (\D_m^{\hat{Y}}+u\sqrt{T}     )|| \D^{\hat{X}}_n\cap (\D_m^{\hat{Z}}+(u+v)\sqrt{T}+\hat{Y}_1 )|\mathbbm{1}_{\max(n_X,n_Y,n_Y')\leq T^\epsilon}]\d u \d v.
  \end{align*}
  For the last expectation to be different from $0$, $|u|\sqrt{T}$ must be smaller than $T^{\epsilon}(n_X+n_Y)$, and therefore $|u|$ must be smaller than $2T^{-\frac{1}{2}+2\epsilon}$. Besides, $T^{\epsilon} |u+v|\sqrt{T}+\hat{Y}_1 $ must also be smaller than $n_X+n'_Y$, and therefore $|v|$ must be smaller than $5 T^{-\frac{1}{2}+2\epsilon}$.

  We get
  \begin{align*}
  \mathbb{E}_{0,y}[D^{1,j}_{n,m}D^{1,l}_{n,m}\mathbbm{1}_{\max(n_X,n_Y,n_Y')\leq T^\epsilon}]
  &= T^{-2}\int_{B(0,2  T^{-\frac{1}{2}+2\epsilon})\times B(0,5 T^{-\frac{1}{2}+2\epsilon}) }\hspace{-0.5cm}
  p_{(j-1)T^{-1}}(0,u) p_{(l-j-1)T^{-1}}(0,v)\\&\hspace{-3.5cm}
  \mathbb{E}_{0,y}[| \D^{\hat{X}}_n\cap (\D_m^{\hat{Y}}+u\sqrt{T}     )|| \D^{\hat{X}}_n\cap (\D_m^{\hat{Z}}+(u+v)\sqrt{T}+\hat{Y}_1 )|\mathbbm{1}_{\max(n_X,n_Y,n_Y')\leq T^\epsilon}]\d u \d v,
  \end{align*}
  and we can now eliminate the troubles related to the appearance of $\hat{Y}_1$ with a simple Cauchy--Schwarz inequality:
  \begin{align*}
  \mathbb{E}_{0,y}[D^{1,j}_{n,m}D^{1,l}_{n,m}&\mathbbm{1}_{\max(n_X,n_Y,n_Y')\leq T^\epsilon}]
  \leq T^{-2}\int_{B(0,2  T^{-\frac{1}{2}+2\epsilon})\times B(0,5 T^{-\frac{1}{2}+2\epsilon}) } \hspace{-0.5cm}
  p_{(j-1)T^{-1}}(x,u) p_{(l-j-1)T^{-1}}(0,v)\\
  &\hspace{-1cm}
  \mathbb{E}_{0,y}[| \D^{\hat{X}}_n\cap (\D_m^{\hat{Y}}+u\sqrt{T})|^2]^{\frac{1}{2}}
  \mathbb{E}_{0,y}[| \D^{\hat{X}}_n\cap (\D_m^{\hat{Z}}+(u+v)\sqrt{T}+\hat{Y}_1 )|^2]^{\frac{1}{2}}\d u \d v\\
  &\leq C T^{-2} T^{-1+4\epsilon} \sup_z\mathbb{E}_{0,z}[(D^{\hat{X},\hat{Y}}_{n,m} )^2] \int_{B(0,5 T^{-\frac{1}{2}+2\epsilon})  } p_{(l-j-1)T^{-1}}(0,v) \d v\\
  &\leq C' T^{-3+4\epsilon} \log(T)^c n^{-2}m^{-2} \frac{T^{4\epsilon}}{l-j-1},
  \end{align*}
  using again Lemma \ref{le:globalBound} for the last inequality.

  We now finally sum over $i$ and $j$. We have
  \begin{align}
  n^2m^2 \mathbb{E}\Big[\Big(
  \sum_{i,j} P^{i,j}_{n,m}\Big)^2\Big]
  &\leq 3n^2m^2\sum_{i,j=1}^T \mathbb{E} [(P^{i,j}_{n,m})^2]+ n^2m^2\sum_{\substack{i,j,k=1\\ |j-k|>1 }}^T \mathbb{E} [P^{i,j}_{n,m}P^{i,k}_{n,m}]\nonumber\\
  &\leq C T^{-1+4\epsilon} \log(T)^c+C \log(nm)^k T^{-1+8\epsilon} \sum_{l=1}^{2T} \frac{1}{l}\nonumber\\
  &\leq C' \log(T)^{c+1} T^{8\epsilon}  T^{-1}.%\underset{n,m\to \infty}\longrightarrow 0.
  \label{eq:conv0}
  \end{align}
  %
  % We have now proved that
  % \[
  % \mathbb{E}_{x,y}\Big[ \Big(T^{\frac{1}{2}-\epsilon}
  % \sum_{i,j=1}^T P^{i,j}_{n,m} \Big)^2\Big]
  % \]
  % is bounded over $T<m\leq n <T^{c^{-1}}$, but
  This is not exactly what we wanted because of the variable $Y_{jT^{-1}}$ appearing in the definition of $P^{i,j}_{n,m}$. In order to conclude, we set
  \[ R^{i,j}_{n,m}= D^{i,j}_{n,m}-\mathbb{E}[ D^{i,j}_{n,m}| X_{(i-1)T^{-1}}, X_{iT^{-1}}, Y_{(j-1)T^{-1}}]
  .\]
  From the symmetry between $X$ and $Y$, the bound \eqref{eq:conv0} also holds with $R^{i,j}_{n,m}$ replacing $P^{i,j}_{n,m}$. Since the conditional expectation is a projection in $L^2$, we also have
  \begin{align*}
  & \Big\|nm \sum_{i,j} (\mathbb{E}[D^{i,j}_{n,m} | X_{(i-1)T^{-1}}, Y_{(j-1)T^{-1}}, Y_{j T^{-1}}  ]- \mathbb{E}[D^{i,j}_{n,m} | X_{(i-1)T^{-1}}, Y_{(j-1)T^{-1}}  ])\Big\|_{L^2}^2\\
  &=
\Big\|   \mathbb{E}\Big[ nm\sum_{i,j} R^{i,j}_{n,m} \Big| X_{(i-1)T^{-1}}, Y_{(j-1)T^{-1}}, Y_{j T^{-1}}\Big]\Big\|_{L^2}^2\\
  &\leq C T^{-1+\epsilon}.
  \end{align*}
  We conclude by combining this  with \eqref{eq:conv0}.
\end{proof}

\subsection{Elimination of the indices $n,m$}
Our next step is to prove the following lemma.
%We now want to prove the following.
\begin{lemma}
  \label{le:tech2}
  For all $c>0$, there exists a constant $C$ such that for all $n,m,T$ with $T<m\leq n<T^{c^{-1}}$,
  \[ \sup_{x,y\in \R^2} \mathbb{E}_{x,y}\Big[\Big(
  \sum_{i,j=1}^T \big( nm \mathbb{E}[D^{i,j}_{n,m}|X_{(i-1)T^{-1}},Y_{(j-1)T^{-1}}] -T^{-1} L_{\sqrt{T}(Y_{(j-1)T^{-1}}-X_{(i-1)T^{-1}})}
  \big)\Big)^2\Big]\leq C T^4m^{-1}.
  \]
\end{lemma}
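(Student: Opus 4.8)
The plan is to make sense of the function $L$ and then estimate the difference. First I would identify $L$: the quantity $nm\,\mathbb{E}[D^{i,j}_{n,m}\mid X_{(i-1)T^{-1}},Y_{(j-1)T^{-1}}]$ should, after the scaling $z\mapsto \sqrt{T}z$, depend on the conditioning only through $w=\sqrt{T}(Y_{(j-1)T^{-1}}-X_{(i-1)T^{-1}})$ and the time lengths $1-(i-1)T^{-1}$, $1-(j-1)T^{-1}$ available to the two pieces after the conditioning point. Writing $\tilde X$, $\tilde Y$ for independent Brownian motions of length $1$ started at $0$ and $w$ respectively, the natural candidate is
\[
L_w=\lim_{n,m\to\infty} nm\,\mathbb{E}\big[\,|\mathcal{D}^{\tilde X}_n\cap\mathcal{D}^{\tilde Y}_m|\,\big],
\]
and one expects $L_w=\frac{1}{4\pi^2}\,\rho(w)$ for some density related to the intersection behaviour at separation $w$; in fact the subsequent lemmas identify $\int L=\frac{\ell^{X,Y}(\R^2)}{4\pi^2}$ in the relevant sense, so here I only need that $L$ is a bounded, rapidly decaying function with $L_w=0$ once $|w|$ exceeds a fixed multiple of the H\"older norms. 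The first real step is therefore to show that, for a single pair $(i,j)$, replacing the Brownian pieces of length $T^{-1}$ by genuine length-$1$ Brownian motions (i.e.\ ignoring that $X^i$, $Y^j$ are only of duration $T^{-1}$ and that the windings must be computed after closing the sub-path) costs at most $O(T^{-1/2})$-type errors per term; this uses the single-winding asymptotics recalled from \cite{LAWA} (Corollary \ref{coro:werner}) together with Lemma \ref{le:globalBound} to bound the second moments uniformly.

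Second, I would rewrite the target sum. Disintegrating $\mathbb{E}[D^{i,j}_{n,m}\mid X_{(i-1)T^{-1}},Y_{(j-1)T^{-1}}]$ over the remaining randomness and scaling by $\sqrt T$ gives
\[
nm\,\mathbb{E}[D^{i,j}_{n,m}\mid X_{(i-1)T^{-1}},Y_{(j-1)T^{-1}}]
= T^{-1}\,G_{i,j}\big(\sqrt{T}(Y_{(j-1)T^{-1}}-X_{(i-1)T^{-1}})\big)
\]
where $G_{i,j}$ is the finite-$n,m$, finite-duration analogue of $L$ for the indices $i,j$. The per-term error $|G_{i,j}(w)-L_w|$ is controlled by two independent mechanisms: the $n,m\to\infty$ convergence in the single-winding problem (a power of $m$, hence small as soon as $m>T^{c}$), and the fact that the duration $1-(i-1)T^{-1}$ differs from $1$ only for $i$ close to $T$, contributing a deficit of size $(T-i+1)/T$ that is itself damped because $G$ and $L$ both vanish unless $|w|$ is of order $1$, an event of conditional probability $O((i+j)^{-1})$ once we integrate over the law of $(X_{(i-1)T^{-1}},Y_{(j-1)T^{-1}})$. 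Summing $T^{-1}\cdot O(\text{error})\cdot O((i+j)^{-1})$ over $i,j\in\{1,\dots,T\}$ and squaring, then using the $L^2$ machinery of the previous remark (the same four-index split $\{i=k,j=l\}$, $\{i=k\text{ or }j=l\}$, $\{i<k,j<l\}$, $\{i<k,j>l\}$, with the decorrelation factors $|j-l|^{-1}$, $|k-l|^{-1}$ coming from the heat kernels $p_{(l-j-1)T^{-1}}$ integrated over balls of area $\sim T^{-1}$), produces the bound $C\,T^4 m^{-1}$: the $T^4$ is the loss from the crude per-term estimates and the over-counting in the sum, and the $m^{-1}$ is what survives from the single-winding convergence rate.

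The main obstacle, as flagged in the remark preceding the previous proof, is the boundary terms $i$ (or $j$) near $T$, where the sub-path $X^i$ has almost no time left and the approximation $G_{i,j}\approx L$ is worst, and the associated nuisance that when one disintegrates over $X_{(k-1)T^{-1}}$ in the cross terms one is tempted to introduce a Brownian \emph{bridge} on $[(i-1)T^{-1},(k-1)T^{-1}]$, for which the mean estimates of Corollary \ref{coro:werner} are not available. I would handle this exactly as in Lemma \ref{le:tech1}: always disintegrate so that what remains is a genuine Brownian motion (conditioning on the earlier endpoint and using the Markov property forward), absorb the bad boundary indices into a separate sum that is crudely bounded by $\sum_{i\text{ or }j>T-T^{1/2}} T^{-1}(i+j)^{-1}\cdot O(1)$ and is easily seen to be $o(T^4 m^{-1})$ after squaring, and control the Hölder-norm tails by the Cauchy--Schwarz argument already used above (bounding $D^{i,j}_{n,m}$ by the area of an intersection of two balls of radius $T^{-\alpha}\|\cdot\|_{\mathcal C^\alpha}$ on the event that the Hölder norms are large). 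Everything else is the routine bookkeeping of powers of $T$ and $\log T$ described in the remark.
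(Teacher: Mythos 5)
The paper's proof of this lemma is a two-line argument, and your plan, while it would probably land somewhere, starts from a false premise about the structure of the quantity and then builds a lot of unnecessary machinery on top of it.

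The crucial observation you miss is the exact Brownian scaling identity
\[
\mathbb{E}\big[D^{i,j}_{n,m}\,\big|\,X_{(i-1)T^{-1}},Y_{(j-1)T^{-1}}\big]
= T^{-1}\,\mathbb{E}_{\sqrt{T}X_{(i-1)T^{-1}},\,\sqrt{T}Y_{(j-1)T^{-1}}}\big[D^{\hat{X},\hat{Y}}_{n,m}\big],
\]
which holds for every pair $(i,j)$. The point is that $D^{i,j}_{n,m}$ depends only on the sub-paths $X^i$ and $Y^j$, and conditionally on the values $X_{(i-1)T^{-1}},Y_{(j-1)T^{-1}}$ these are independent Brownian motions of duration \emph{exactly} $T^{-1}$, for every $i$ and $j$. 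Brownian scaling (space by $\sqrt T$, time by $T$) then turns them into unit-length Brownian motions and produces the factor $T^{-1}$ for the area. Consequently your ``$G_{i,j}$'' does not depend on $(i,j)$ at all, and your second error mechanism --- the ``duration deficit $1-(i-1)T^{-1}$'' and the associated worry about the boundary indices $i\approx T$ --- simply does not exist. You are conflating the remaining time of the whole path $X$ after $(i-1)T^{-1}$ with the duration of the piece $X^i$; only the latter enters, and it is constant.

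Once the identity is in place, each summand is pointwise (deterministically) bounded by $T^{-1}\sup_{x,y}\big|nm\,\mathbb{E}_{x,y}[D^{X,Y}_{n,m}]-L_{y-x}\big|$. There are $T^2$ summands, so the sum is bounded by $T$ times this sup, and squaring gives at most $T^2\sup(\cdots)^2$, which Corollary \ref{coro:werner} bounds by $C T^2 m^{-1}\le C T^4 m^{-1}$. No four-index covariance split, no heat-kernel decorrelation, no Brownian bridges, no H\"older-norm tails are required for \emph{this} lemma; all of that machinery belongs to Lemma \ref{le:tech1}, where one sums the random variables $D^{i,j}_{n,m}$ themselves rather than their (deterministic, bounded) conditional expectations. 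The conceptual gap is thus not merely stylistic: the plan misidentifies where the error comes from, overlooks that each term is controlled deterministically and uniformly in $(i,j)$, and as a result proposes to redo the hardest part of the previous proof where no probabilistic estimate is needed at all.
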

We first need some preparation. We start with some estimations in mean for $D^{X,Y}_{n,m}$.
\subsubsection{Asymptotic in mean}
\label{subsub}
The following result can be found in \cite[Lemme 5]{Werner}.\footnote{Remark that our normalisation is different from the one in \cite{Werner}, because we consider the winding as an integer instead of a continuous angle.}
\begin{lemma}
  \label{le:wendelin}
  Let $f_n(x)=\mathbb{P}_0(\theta_X(x)=n)$ and
  \[\Theta^1=\{\phi:\mathbb{R}^2\setminus \{0\}\to \R: \forall k>0, \int_{\mathbb{R}^2} |\phi(x)|^k \d x<\infty\}.\]
  Then,
  \begin{itemize}
  \item There exists $n_0$ and $\phi\in \Theta^1$ such that for all $n\geq n_0$, $n^2f_n\leq \phi$. % and $n^4 f^{(2)}_n\leq \psi$.
  \item For all $x\neq 0$, as $n\to \infty$, \[n^2f_n(x)\longrightarrow l(x)\coloneqq\frac{1}{2\pi} \int_0^1 p_s(0,x) \d s% \mbox{ and }n^4 f^{(2)}_n(x,y)\longrightarrow 4\pi^2 G(y-x)(G(x)+G(y)))
  .\]
  \end{itemize}
  %{\color{red} la borne unif est seulement sur un domaine loin de la diagonale?}
\end{lemma}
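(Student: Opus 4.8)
The plan is to start from Yor's explicit description of the joint law of the winding number $\theta_X(x)$ and the geometric data of the Brownian path near $x$, and to read off from it both the pointwise limit and the domination. Concretely, after a translation we may assume $x=0$ is the reference point and the Brownian motion starts from some $x_0\neq 0$; by Brownian scaling we may normalise $|x_0|=1$. The winding number $\theta_X(0)$ of $\bar X$ (the path closed by a segment) around $0$ differs from the continuous total angle $\gamma$ swept by $X$ by a bounded correction coming from the closing segment and from the fractional part, so $\mathbb P_0(\theta_X(0)=n)$ is, up to this bounded shift, the probability that the continuous winding lies in a unit-length window around $2\pi n$. The first step is therefore to reduce the statement to an estimate of the density of the continuous winding angle $\gamma$ at height $2\pi n$, uniformly in the path not hitting $0$.

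The second step is the asymptotic analysis. Writing $X$ in polar coordinates $(R_t,\gamma_t)$ around $0$ and using the skew-product representation $\gamma_t = \beta_{H_t}$ with $H_t=\int_0^t R_s^{-2}\d s$ and $\beta$ an independent linear Brownian motion, one has $\gamma_1 = \beta_{H_1}$, so conditionally on the radial part, $\gamma_1$ is Gaussian with variance $H_1$. Hence $f_n(x) \approx \mathbb E\big[ \tfrac{1}{\sqrt{2\pi H_1}} e^{-(2\pi n)^2/(2H_1)} \big]$ up to the bounded correction above. Since $H_1<\infty$ a.s. but has no exponential moments in general, the decay is governed by the tail of $H_1$: the dominant contribution to this expectation as $n\to\infty$ comes from atypically large values of $H_1$, i.e.\ from paths that spend time near $0$, and a Laplace/Tauberian analysis of $\mathbb E[e^{-\lambda/H_1}/\sqrt{H_1}]$ as $\lambda\to\infty$ produces the $n^{-2}$ rate with the explicit constant $l(x)=\frac{1}{2\pi}\int_0^1 p_s(0,x)\,\d s$; the integral $\int_0^1 p_s(0,x)\,\d s$ is exactly the expected occupation density of $X$ at $0$, which is the right quantity since "winding many times around $0$" forces the path to come close to $0$. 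For the domination one uses the same representation together with the elementary bound $\frac{1}{\sqrt{2\pi t}}e^{-a^2/(2t)}\le C a^{-2}$ (at $a=2\pi n$), giving $n^2 f_n(x)\le C\,\mathbb E[\text{something integrable in }x]$; one then checks that the resulting bound $\phi(x)$ — which one can take comparable to $l(x)$ plus a Gaussian-type term controlling large $|x|$ and a logarithmic-type term controlling $|x|\to 0$ — lies in $\Theta^1$, i.e.\ has all $L^k$ norms finite, the only delicate points being integrability near $x=0$ (where $l(x)\sim \frac{1}{2\pi}\cdot$ a logarithmic singularity, still in every $L^k_{\mathrm{loc}}$) and the Gaussian decay at infinity.

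The main obstacle I expect is not the pointwise limit but making the domination uniform in $n\ge n_0$ while keeping the bound in $\Theta^1$: one must control the small-$|x|$ regime (where the starting point is essentially on top of the reference point, $H_1$ is typically huge, and $f_n(x)$ is largest) and simultaneously the large-$|x|$ regime (where the path must travel a long way before it can wind, costing a Gaussian factor) with a single function $\phi$, and one must handle the discretisation/closing-segment correction relating $\theta_X$ to the continuous angle without losing uniformity. Since, however, this is precisely \cite[Lemme 5]{Werner} — modulo the change of normalisation from continuous angle to integer winding, which only shifts the window and multiplies the limiting constant by the Jacobian $2\pi$, already accounted for in the statement of $l(x)$ — I would simply invoke that reference for the quantitative estimates and restrict the write-up to explaining the normalisation change and why $l(x)\in\Theta^1$.
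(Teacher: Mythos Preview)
Your proposal is correct and matches the paper's treatment: the paper does not prove this lemma at all but simply records it as \cite[Lemme~5]{Werner}, with a footnote noting exactly the normalisation issue (integer winding versus continuous angle) that you flag at the end. Your additional heuristic via the skew-product representation and the Tauberian analysis of $H_1$ is a reasonable sketch of how Werner's argument goes, but since you ultimately defer to the same reference, the two approaches coincide.
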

We are first going to show that, when $f_n$ is replaced with $g_n=\mathbb{P}_0(\theta_X(z)\geq n)$, not only can we bound $n g_n$, but we can also get a bound on the convergence rate.
\begin{lemma}
\label{le:techMean}
  Let $g_n(x)=\mathbb{P}_0(\theta_X(z)\geq n)$.
  For $n\geq 2$ and $r>0$, let also
  \[C_n= 2\pi \ln\Big(1+\frac{1}{2\pi n-1} \Big).\]
  Then, there exists $C$ such that for all $z\in \R^2$ and $n\geq 2$,

  \[ 0\leq C_nl(z)-g_n(z) \leq
  \left\{ \begin{array}{ll} Cn^{-3} |z|^{-2}  & \mbox{when } |z|\leq 1,\\  Cn^{-3}e^{-\frac{|z|}{3}} &\mbox{when } |z|\geq 1. \end{array}\right.
  \]

  In particular, there exists $C'$ such that for all $z\in \R^2$ and $n\geq 2$,
  \[\big|n g_n(z)- l (z)\big|
  \leq
  \left\{ \begin{array}{ll} C' (n^{-1} l (z)+n^{-2} |z|^{-2})  & \mbox{when } |z|\leq 1,\\  C'(n^{-1} l (z)+n^{-2}e^{-\frac{|z|}{3}} ) &\mbox{when } |z|\geq 1. \end{array}\right.
  \]
\end{lemma}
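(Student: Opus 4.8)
The plan is to reduce $g_n$ to a one–dimensional object via the skew–product representation of $X-z$, and then analyse that object asymptotically.

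\emph{Representation.} Write $X_t-z=R_te^{i\Theta_t}$ with $\Theta_0=0$, $\Theta$ the continuous determination of the argument along $X$. Then $\log R_t=\log|z|+\beta_{\sigma_t}$ and $\Theta_t=\widetilde\beta_{\sigma_t}$, where $\beta,\widetilde\beta$ are independent standard Brownian motions run on the common clock $\sigma_t=\int_0^t|X_s-z|^{-2}\,\d s$; in particular $\widetilde\beta$ is independent of $\sigma_1$. Since the winding number of a closed curve is the nearest integer to its total angular increment divided by $2\pi$, and the closing segment of $\bar X$ sweeps an angle in $(-\pi,\pi)$ around any point off it, $\{\theta_X(z)\ge n\}=\{\Theta_1\ge \pi(2n-1)\}$ up to a null event. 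Hence, with $\bar\Phi$ the standard Gaussian tail and $a_n=\pi(2n-1)$,
\[
g_n(z)=\mathbb E\Big[\bar\Phi\Big(\frac{a_n}{\sqrt{\sigma_1}}\Big)\Big]=\frac{a_n}{2\sqrt{2\pi}}\int_0^\infty \mathbb P(\sigma_1>t)\,t^{-3/2}e^{-a_n^2/(2t)}\,\d t,
\]
the second equality by integration by parts. Everything is now encoded in the tail of the clock $\sigma_1=\int_0^1|X_s-z|^{-2}\,\d s$; note also $l(z)=\tfrac1{2\pi}\int_0^1 p_s(0,z)\,\d s=\tfrac1{4\pi^2}E_1(|z|^2/2)$, with $E_1$ the exponential integral.

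\emph{Tail of the clock.} Using $\sigma_1=A^{-1}(|z|^{-2})$ with $A(h)=\int_0^h e^{2\beta_v}\,\d v$, the event $\{\sigma_1>t\}$ equals $\{A(t)<|z|^{-2}\}$, a small–deviation event for the exponential functional of $\beta$; equivalently, and this is the route I would take since it is essentially the mechanism already present in Werner's proof of Lemma~\ref{le:wendelin}, one decomposes $X$ at its last visit to the circle $\{|w-z|=|z|\}$ and estimates the contribution of the innermost excursion. One thereby gets, for $|z|\le 1$,
\[
\mathbb P(\sigma_1>t)=l(z)\big(\kappa_0\,t^{-1/2}+\kappa_1\,t^{-1}\big)+R_z(t),\qquad |R_z(t)|\le C\,|z|^{-2}\,t^{-3/2},
\]
with $\kappa_0,\kappa_1$ explicit universal constants, together with the one–sided companion estimate needed below. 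The factor $l(z)$ appears because, at leading order, $\mathbb P(\sigma_1>t)$ is $l(z)$ times a universal tail — it measures the (normalised) expected occupation of $X$ near $z$ — and the constant $2\pi$ that will enter $C_n$ is the angle of one full turn. For $|z|\ge 1$ I would replace this by the elementary bounds $l(z)\le\tfrac1{2\pi^2|z|^2}e^{-|z|^2/2}$ (from $E_1(x)\le e^{-x}/x$) and $\mathbb P(\sigma_1>t)\le C e^{-|z|^2/2}t^{-1/2}$ (the path must first reach the vicinity of $z$), which already give the claim once $|z|\gtrsim\log n$; the band $1\le|z|\lesssim\log n$ is covered by the same cancellation as for $|z|\le1$.

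\emph{Assembling and the consequence.} Substituting the expansion into the integral formula and using $\int_0^\infty t^{-2}e^{-a^2/(2t)}\,\d t=2a^{-2}$ and $\int_0^\infty t^{-5/2}e^{-a^2/(2t)}\,\d t=\sqrt{2\pi}\,a^{-3}$ with $a=a_n\asymp n$, the $\kappa_0$ and $\kappa_1$ terms contribute $l(z)\big(\tfrac{2}{2n-1}+O(n^{-2})\big)$ with the $n^{-1}$ and $n^{-2}$ coefficients explicit, and the point of the closed form $C_n=2\pi\ln\!\big(1+\tfrac1{2\pi n-1}\big)$ is precisely that it matches this two–term expansion up to $O(n^{-3})$; the remainder $R_z(t)$, integrated against $t^{-3/2}e^{-a_n^2/(2t)}$ (which concentrates on $t\asymp n^2$), contributes $O(|z|^{-2}n^{-3})$. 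The one–signed statement $0\le C_n l(z)-g_n(z)$ is read off from the one–sided companion bound on the clock tail. Finally, writing $g_n(z)=C_n l(z)-E_n(z)$ with $0\le E_n(z)\le C n^{-3}|z|^{-2}$, and using $nC_n=1+\tfrac1{2(2\pi n-1)}+O(n^{-2})$, one gets
\[
|n g_n(z)-l(z)|\le |nC_n-1|\,l(z)+n\,E_n(z)\le C'\big(n^{-1}l(z)+n^{-2}|z|^{-2}\big)
\]
for $|z|\le1$, and the analogue with $e^{-|z|/3}$ in place of $|z|^{-2}$ for $|z|\ge1$.

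\emph{Main obstacle.} The only genuine difficulty is the tail step: obtaining the two–term expansion of $\mathbb P(\sigma_1>t)$ with a remainder of the exact order $|z|^{-2}t^{-3/2}$ uniformly in $z$, and with the correct one–sided control. Once that is in hand, the rest is bookkeeping with Gaussian integrals and the elementary expansion of $C_n$.
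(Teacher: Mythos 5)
Your reduction is a genuinely different route from the paper's, but it has a real gap precisely at the step you yourself flag as the main obstacle. The skew--product representation, the identification of $\{\theta_X(z)\geq n\}$ with $\{\Theta_1\geq \pi(2n-1)\}$ up to null sets (consistent with the paper's footnote relating $\tilde\theta$ and $\theta_X$), the formula $g_n(z)=\mathbb E[\bar\Phi(a_n/\sqrt{\sigma_1})]$ and the integration by parts, and the Gaussian moment bookkeeping $\int_0^\infty t^{-2}e^{-a^2/(2t)}\d t=2a^{-2}$, $\int_0^\infty t^{-5/2}e^{-a^2/(2t)}\d t=\sqrt{2\pi}\,a^{-3}$ are all correct. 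But everything then hinges on the asserted two--term tail expansion
\[
\mathbb P(\sigma_1>t)=l(z)\big(\kappa_0 t^{-1/2}+\kappa_1 t^{-1}\big)+R_z(t),\qquad |R_z(t)|\leq C|z|^{-2}t^{-3/2},
\]
together with a one--sided companion bound strong enough to give $0\leq C_n l(z)-g_n(z)$. None of this is proved, and it is not a routine estimate: it requires that \emph{both} the $t^{-1/2}$ and the $t^{-1}$ coefficients be exactly proportional to $l(z)$ with universal ratios, a remainder of the precise order $|z|^{-2}t^{-3/2}$ uniformly in $z$, and sign control. Establishing that is essentially equivalent to rederiving from scratch the explicit law that the paper imports.

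The paper avoids this entirely by starting from Mansuy's closed--form integral for the winding law (Theorem~5.2 in \cite{Mansuy}), so that $g_n$ appears as an explicit iterated integral against the kernel $\frac{x}{x^2+t^2}$ over $x\in[2\pi(n-1),2\pi n]$. The quantity $C_n l(z)$ is then the same integral with $\frac{x}{x^2+t^2}$ replaced by $\frac1x$, whence the sign $0\leq C_n l(z)-g_n(z)$ is immediate from the pointwise inequality $\frac{x}{x^2+t^2}\leq\frac1x$, and the size of the error comes from $\frac1x-\frac{x}{x^2+t^2}=\frac{t^2}{x(x^2+t^2)}\leq\frac{t^2}{x^3}$ together with elementary bounds on $I_0$ and the heat kernel; the two regimes $|z|\leq1$ and $|z|\geq1$ are handled by bounding the $\rho$--integral differently (splitting at $\rho=1$ in the latter). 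The identification of the resulting expression $\tilde l$ with $l$ is done at the end by comparing to the known pointwise limit of Lemma~\ref{le:wendelin}. In short: the paper trades your probabilistic decomposition for an analytic closed form, and the lemma then follows from inequalities on an explicit integrand; your plan, to be complete, would need to supply a proof of the tail expansion of $\sigma_1$ (with sign), which is the hard part and is currently missing.
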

\begin{proof}
  The proof is very similar to, though simpler than, our proof of Lemma 3.1 in \cite{LAWA}.
  We start with the following formula, which is Theorem 5.2 in \cite{Mansuy}.\footnote{In \cite{Mansuy}, the formula is given for the continuous determination $\tilde{\theta}$ of the angle along the trajectory. The relation between $\tilde{\theta}$ and $\theta_X$ is given by $ \tilde{\theta}(z)-2\pi \theta \in [-\pi,\pi]$. }
  \[
  \mathbb{P}_r\big(\theta(0)\geq n\big| |B_1|=\rho \big)=\frac{1}{2\pi^2 I_0(r\rho)} \int_{2\pi (n-1)}^{2\pi n} \int_0^\infty e^{-r\rho \cosh(t)} \frac{x}{x^2+t^2} \d t \d x,\]
  from which we deduce, integrating back with respect to $\rho$,
  \[
  g_n(z)=\frac{1}{2\pi^2}\int_0^\infty
  \frac{\rho}{ I_0(|z|\rho)} \int_{0}^{2\pi} p_1(|z|,\rho e^{iu} ) \d u  \int_{2\pi (n-1)}^{2\pi n} \int_0^\infty e^{-|z|\rho \cosh(t)} \frac{x}{x^2+t^2} \d t \d x \d \rho.\]
  We set  \[\tilde{ l }(z)=\frac{1}{4\pi^3}\int_0^\infty \frac{\rho}{I_0(|z|\rho)} \int_0^{2\pi} p_1(|z|,\rho e^{iu} ) \d u \int_0^\infty e^{-|z|\rho \cosh(t)} \d t \d \rho,\]
  where $I_0$ is the modified Bessel function of the first kind, and $p_1(r,\rho e^{iu})$ is defined by the canonical identification between $\mathbb{C}$ and $\R^2$.
  which as we will deduce later is actually equal to $ l (z)$. First, we show that the first part of the lemma holds with $\tilde{ l }$ replacing $ l $.

  The bound $0\leq C_n\tilde{ l }(z)-g_n(z)$ is simply obtain by using $\frac{x}{x^2+t^2}<\frac{1}{x}$. For the other inequality, we first assume $|z|\leq 1$ and we use the bounds
  \[\frac{1}{x}-\frac{x}{x^2+t^2}=\frac{t^2}{x(x^2+t^2)}\leq \frac{t^2}{x^3}, \quad p_1(x,y)\leq \frac{1}{2\pi}, \quad I_0(|z|\rho)\geq 1.\]
  We obtain
  \begin{align*}
  C_n\tilde{ l }(z)-g_n(z)&\leq \frac{1}{2\pi^2}  \int_{2\pi (n-1)}^{2\pi n} x^{-3}\d x
   \int_0^\infty\int_0^\infty
  \rho   e^{-|z|\rho \cosh(t)} t^2 \d t \d \rho\\
  &\leq  C n^{-3} \int_0^\infty \frac{t^2}{|z|^2\cosh(t)^2} \d t\\
  &\leq C'n^{-3} |z|^{-2}.
  \end{align*}

  This would also work for $|z|\geq 1$, but the bound would not be sufficient for our purpose. In that case, we split the integral on $\rho$ at $1$. For $\rho\leq 1$, we use $p_1(|z|, \rho e^{iu})\leq p_1(0,|z|-1)$. For $\rho\geq 1$, we use $e^{-|z|\rho\cosh(t)}\leq e^{-\frac{|z|}{3}} e^{-\frac{\rho}{3}}e^{-\frac{t}{3}}$.
  We get
  %\[\frac{1}{x}-\frac{x}{x^2+t^2}\leq \frac{t^2}{x^3}, \quad e^{-|z|\rho \cosh(t)}\leq e^{-\rho\cosh(t)}, \quad
  %\int_{0}^\infty \int_0^{2\pi} p_1(r,\rho e^{iu }) r \d u \d r=1, \quad I_0(|z|\rho)\geq 1.\]
  \begin{align*}
  C_n\tilde{ l }_{|z|}-g_n(z)&\leq \frac{1}{2\pi^2} \int_{2\pi(n-1)}^{2\pi n} x^{-3} \d x \Big(
  2 \pi p_1(0,|z|-1) \int_0^1 \rho \int_0^\infty e^{-|z|\rho \cosh(t)} t^2 \d t\d \rho\\
  &\hspace{7.5cm}+e^{-\frac{|z|}{3}}
  \int_1^\infty \rho e^{-\frac{\rho}{3}} \int_0^\infty e^{- \frac{t}{3}} t^2 \d t\d \rho
  \Big)\\
  &\leq C \int_{2\pi(n-1)}^{2\pi n} x^{-3} \d x \ e^{-\frac{|z|}{3}}.
  \end{align*}
  Now, we can deduce that $ l =\tilde{ l }$. Indeed, for all $z\neq 0$, we know that \[n^2(g_n(z)-g_n(z+1))=f_n(z)n^2\underset{n\to \infty}\longrightarrow l (z)\] from Lemma \ref{le:wendelin}, but we also know that \[n^2(g_n(z)-g_n(z+1))=n^2(C_n\tilde{ l }(z)-C_{n+1}\tilde{ l }(z)+o(n^{-3}) )\underset{n\to \infty}\longrightarrow_{n\to \infty} \tilde{l}(z).\]
\end{proof}

\begin{corollary}
  \label{coro:werner}
  Let $A^{X,Y}_{n,m}=|\{z\in \R^2: \theta_X(z)\geq n,  \theta_Y(z)\geq m\}|$. Then,
  \[n^2m^2\mathbb{E}_{x,y}[A^{X,Y}_{n,m}]\underset{n,m\to \infty}\longrightarrow L_{y-x}\coloneqq\frac{1}{4\pi^2} \int_0^1\int_0^1 p_{s+u}(x,y) \d u \d s, \mbox{ and } nm\mathbb{E}_{x,y}[D^{X,Y}_{n,m}]\underset{n,m\to \infty}\longrightarrow L_{y-x},\]
  with convergence uniform in $x$ and $y$.

  There exists a constant $C$ such that for all $n,m\geq 1$,
  \[
  \sup_{x,y\in \R^2} |nm\mathbb{E}_{x,y}[D^{X,Y}_{n,m}]-L_{y-x}|\leq C (n^{-\frac{1}{2}} + m^{-\frac{1}{2}}).
  \]

  Besides, there exists an integrable function $\phi$ and $n_0$ such that for all $n,m>n_0$ and for all $y\in \R^2$, $nm\mathbb{E}_{y}[D^{X,Y}_{n,m}] \leq \phi(y)$.
\end{corollary}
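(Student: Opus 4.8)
The plan is to reduce both expectations to convolutions of the one‑point quantities and then feed in Lemmas~\ref{le:wendelin} and~\ref{le:techMean}. Write $g_n(z)=\mathbb{P}_0(\theta_X(z)\ge n)$ and $f_n(z)=\mathbb{P}_0(\theta_X(z)=n)$. Since $X-x$ and $Y-y$ are Brownian motions from $0$ and $X,Y$ are independent, Tonelli's theorem gives
\[
\mathbb{E}_{x,y}[D^{X,Y}_{n,m}]=\int_{\R^2}g_n(z-x)\,g_m(z-y)\,\d z,
\qquad
\mathbb{E}_{x,y}[A^{X,Y}_{n,m}]=\int_{\R^2}f_n(z-x)\,f_m(z-y)\,\d z .
\]
The law of $X$ started at $0$ is rotation invariant and $\theta_{RX}(Rz)=\theta_X(z)$, so $g_n$ and $f_n$ are radial; after the change of variables $w=z-x$ the integrals become $(g_n*g_m)(y-x)$ and $(f_n*f_m)(y-x)$. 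Hence $nm\,\mathbb{E}_{x,y}[D^{X,Y}_{n,m}]=\big((ng_n)*(mg_m)\big)(y-x)$ and $n^2m^2\,\mathbb{E}_{x,y}[A^{X,Y}_{n,m}]=\big((n^2f_n)*(m^2f_m)\big)(y-x)$, so everything is a function of $y-x$ alone and the task is to control these convolutions.

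Next I would pass to the limit. By Lemma~\ref{le:techMean}, $g_n\le C_n l$ with $nC_n=2\pi n\ln(1+\tfrac1{2\pi n-1})\le\frac{2\pi}{2\pi-1}<2$, so $ng_n\le 2l$ for every $n$; since $l(z)\asymp\log(1/|z|)$ near $0$ and $l$ has Gaussian decay at infinity, $l\in L^1\cap L^2$, and the bound of Lemma~\ref{le:techMean} gives $ng_n\to l$ pointwise away from $0$, hence $\|ng_n-l\|_{L^2}\to0$ by dominated convergence. Likewise Lemma~\ref{le:wendelin} gives $n^2f_n\le\phi\in\Theta^1\subset L^2$ and $n^2f_n\to l$ pointwise, hence $l\le\phi$ and $\|n^2f_n-l\|_{L^2}\to0$. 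Using $\|u*v\|_{L^\infty}\le\|u\|_{L^2}\|v\|_{L^2}$ together with $nm(g_n*g_m)-l*l=(ng_n-l)*(mg_m)+l*(mg_m-l)$ and its analogue for $f$, we obtain $\sup_w|nm(g_n*g_m)(w)-(l*l)(w)|\to0$ and $\sup_w|n^2m^2(f_n*f_m)(w)-(l*l)(w)|\to0$. Finally the semigroup identity $p_s*p_u=p_{s+u}$ yields
\[
(l*l)(w)=\frac1{4\pi^2}\int_0^1\!\!\int_0^1(p_s*p_u)(0,w)\,\d u\,\d s=\frac1{4\pi^2}\int_0^1\!\!\int_0^1 p_{s+u}(0,w)\,\d u\,\d s=L_w ,
\]
which is $L_{y-x}$ for $w=y-x$; this establishes both uniform convergences.

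For the rate $C(n^{-1/2}+m^{-1/2})$ I would use the quantitative estimate of Lemma~\ref{le:techMean}: write $ng_n-l=(ng_n-nC_nl)+(nC_n-1)l$ with $|nC_n-1|\le Cn^{-1}$ and $|ng_n-nC_nl|\le Cn^{-2}h$, where $h(z)=|z|^{-2}\mathbbm{1}_{|z|\le1}+e^{-|z|/3}\mathbbm{1}_{|z|>1}$, and estimate $(ng_n-l)*(mg_m)$ in $L^\infty$ uniformly in $w$. The term $(nC_n-1)\,l*(mg_m)$ is $\le Cn^{-1}\,l*(2l)=Cn^{-1}L$, uniformly bounded; for $n^{-2}h*(mg_m)$ one cuts the $z$‑integral at $\{|w-z|\le\rho\}$ and $\{|w-z|>\rho\}$, bounding on the small ball (via $ng_n,mg_m\le 2l$ and Cauchy--Schwarz) by $C\|l\|_{L^2}\big(\int_{|z|\le\rho}l(z)^2\,\d z\big)^{1/2}\lesssim\rho\log(1/\rho)$, and on the complement by $Cn^{-2}\big(\rho^{-2}\|mg_m\|_{L^1}+\|mg_m\|_{L^1}\big)\le Cn^{-2}\rho^{-2}$; with $\rho=n^{-2/3}$ this is $\lesssim n^{-2/3}\log n$, and symmetrically in $m$ for $l*(mg_m-l)$, which beats $n^{-1/2}+m^{-1/2}$ (the remaining bounded range of $n,m$ is absorbed in the constant, since $nm(g_n*g_m)\le 4L_0$ and $L\le L_0$ are bounded). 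The domination statement then follows at once: for $n,m$ large, $nm\,\mathbb{E}_{0,y}[D^{X,Y}_{n,m}]=\big((ng_n)*(mg_m)\big)(y)\le 4(l*l)(y)=4L_y$, and $L\in L^1$ because $\int_{\R^2}L=\|l\|_{L^1}^2<\infty$, so $\phi:=4L$ works.

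The step I expect to be the main obstacle is the convolution estimate near the origin in the rate bound: the error $ng_n-l$ is only controlled by $O(n^{-2}|z|^{-2})$ there, which is not locally integrable in the plane, so one cannot convolve error bounds naively; the fix is to exploit that $l$ and $mg_m$ are merely logarithmically singular, hence in $L^2_{\mathrm{loc}}$, and to trade the local blow‑up against the smallness $n^{-2}$ through the cut‑off $\rho$. Once Lemma~\ref{le:techMean} and the semigroup identity are in hand, everything else is bookkeeping.
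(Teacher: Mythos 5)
Your reduction to the convolutions $(ng_n)*(mg_m)$ and $(n^2f_n)*(m^2f_m)$, the use of $\|u*v\|_{L^\infty}\le\|u\|_{L^2}\|v\|_{L^2}$ together with $L^2$-convergence, and the cut-off near the singularity of the Lemma~\ref{le:techMean} error bound, are all exactly the ingredients of the paper's proof; it just writes the same estimates as $\sup_y\int|\dots|\,\mathrm{d}z$ rather than making the convolution explicit. A few small remarks. Your observation that $nC_n<2$ for all $n\ge2$ gives $ng_n\le 2l$ directly, which is a touch cleaner than the paper's route of first showing $ng_n\le 2\phi$ for $n$ large via the pointwise limit; both are fine. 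For the rate, the paper computes $\|ng_n-l\|_{L^2}^2\le Cn^{-1}$ by splitting $\R^2$ at radii $\epsilon=n^{-3/2}$ and $1$ and then feeds this into the $L^2$ duality bound, whereas you estimate $(ng_n-l)*(mg_m)$ directly in $L^\infty$ with a single radial cut $\rho\sim n^{-2/3}$ and obtain the marginally sharper $O(n^{-2/3}\log n)$; since both are $\le C(n^{-1/2}+m^{-1/2})$ this is an improvement in detail, not in substance. Finally, your cut is written as $\{|w-z|\le\rho\}$ but the estimate that follows (bounding $|ng_n-nC_nl|\le 4l$ by Cauchy--Schwarz against $mg_m$, then using $n^{-2}|z|^{-2}\le n^{-2}\rho^{-2}$ on the complement) clearly corresponds to cutting at $\{|z|\le\rho\}$, which is the correct choice since the singularity of the error term sits at $z=0$; this looks like a typo rather than a gap.
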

\begin{proof}
  From translation invariance, we can assume $x=0$.
  For the first convergence, it suffices to write the left hand side as
  \[ \int_{\mathbb{R}^2} n^2\mathbb{P}_{0,y}(\theta_X(z)=n )m^2\mathbb{P}_{0,y}(\theta_Y(z-y)=m ) \d z.\]
  The expression under the integral is dominated (for $n_0$ large enough) by $\phi(z)\phi(z-y)$, where $\phi$ is given by Lemma \ref{le:wendelin}. The function $z\mapsto \phi(z)\phi(z-y)$ is integrable from Cauchy--Schwarz inequality, which ensures that we can apply the dominated convergence theorem, so that the limit of the left hand side exists
  is equal to \[\int_{\mathbb{R}^2} \frac{1}{4\pi^2} \int_0^1 p_s(0,z) \d s \int_0^1 p_u(y,z) \d u \d z =\frac{1}{4\pi^2} \int_0^1\int_0^1 p_{s+u}(0,y) \d u \d s=L(y).\]

  For the uniformity, we set $\phi$ as in the previous lemma, and we write
  \begin{align*}
  \sup_y &|n^2m^2 \mathbb{E}_{0,y}[A_{n,m}]-L(y)|
  \leq\sup_y
  \int_{\mathbb{R}^2} |n^2f_n(z)m^2f_m(z-y)-l(z)l(z-y)|  \d z\\
  &\leq \sup_y \int_{\mathbb{R}^2} l(z)|m^2f_m(z-y)-l(z-y)|  \d z+\sup_y
  \int_{\mathbb{R}^2} |n^2f_n(z)-l(z)|m^2f_m(z-y)\d z\\
  &\leq  \sup_y \Big(\int_{\mathbb{R}^2} \phi(z)^2 \d z \int_{\mathbb{R}^2} |m^2f_m(z-y)-l(z-y)|^2  \d z\Big)^{\frac{1}{2}}\\&+\sup_y
  \Big(\int_{\mathbb{R}^2} |n^2f_n(z)-l(z)|^2\d z \int_{\R^2} (\phi(z-y))^2 \d z \Big)^{\frac{1}{2}}\\
  &\leq C \Big(\int_{\mathbb{R}^2} |m^2f_m(z)-l(z)|^2  \d z\Big)^{\frac{1}{2}}
  +C \Big(\int_{\mathbb{R}^2} |n^2f_n(z)-l(z)|^2  \d z\Big)^{\frac{1}{2}}\\
  &\hspace{-0.2cm}\underset{n,m\to \infty}\longrightarrow 0.
  \end{align*}

  For the second convergence, we first show that there exists $\phi\in \Theta^1: ng_n\leq \phi$. Indeed, let $g_n(x)=\mathbb{P}_0(\theta_X(x)\geq n)=\sum_{k=n}^\infty f_n(x)$. Let $\phi$ and $n_0$ be given as in Lemma \ref{le:wendelin}. Then, for all $n\geq n_0$,
  \[n g_n\leq n\sum_{k=n}^\infty k^{-2} \phi\underset{n\to \infty}\sim \phi .\]
  Thus, for $n_1$ sufficiently large, for all $n\geq n_1$, $ng_n\leq 2\phi$. With Lemma \ref{le:techMean}, we know that $ng_n$ converges pointwise toward $\ell$, and we then repeat the argument above.

  Now we need to bound the remaining part.
  We have
  \[
  \sup_y |nm \mathbb{E}_{0,y}[D_{n,m}]-L(y)|\leq C  \Big(\int_{\mathbb{R}^2} |mg_m(z)-l(z)|^2  \d z\Big)^{\frac{1}{2}}
  +C \Big(\int_{\mathbb{R}^2} |ng_n(z)-l(z)|^2  \d z\Big)^{\frac{1}{2}},
  \]
  and, for all $\epsilon>0$,
  \begin{align}
  \int_{\mathbb{R}^2} |ng_n(z)-l(z)|^2  \d z
  &\leq
  \int_{B(0,\epsilon)} l(z)^2\d z+\int_{B(0,\epsilon)}  n^2 \d z
  +\int_{B(0,1)\setminus B(0,\epsilon) } (n^{-1} l(z) +n^{-2} |z|^{-2})^2 \d z\nonumber\\
  &\hspace{5.5cm}+\int_{\R^2\setminus B(0,1) } (n^{-1} l(z) +n^{-2} e^{-\frac{|z|}{3} } )^2 \d z.
  \label{eq:bornInt}
  \end{align}
  In order to bound
  $\int_{B(0,\epsilon)} l(z)^2\d z$, we first remark that
  \begin{align*}
  \int_{B(0,\epsilon)} l(z)^2\d z&=\frac{1}{4\pi^2} \int_0^1 \int_0^1 \int_{B(0,\epsilon)} p_s(0,x)p_t(0,x) \d x\d t\d s\\
  &<\frac{1}{4\pi^2}   \int_0^1 \int_0^1 p_{s+t}(0,0) \d s\d t\\
  &<\infty.
  \end{align*}
  Then, by applying the changes of variables $y=\epsilon^{-1} x$, $u=\epsilon^{-2} t$, $v=\epsilon^{-2} s$, we get
  \begin{align*}
  \int_{B(0,\epsilon)} l(z)^2\d z&=\frac{\epsilon^2}{4\pi^2} \int_{B(0,1)} \int_0^{\epsilon^{-2}} \int_0^{\epsilon^{-2}} p_u(0,y)p_v(0,y) \d y\d u\d t\\
  &\leq \frac{\epsilon^2}{4\pi^2} \int_{B(0,1)} \int_0^{1} \int_0^{1} p_u(0,y)p_v(0,y) \d y\d u\d t
  +\frac{\epsilon^2}{4\pi^2} \int_{B(0,1)} \int_1^{\epsilon^{-2}} \int_1^{\epsilon^{-2}} \frac{1}{uv} \d y\d u\d t\\
  &\leq C' \epsilon^2 \log(\epsilon)^2.
  \end{align*}
  Going back to \eqref{eq:bornInt}, we obtain
  \begin{align*}
  \int_{\mathbb{R}^2} |ng_n(z)-l(z)|^2  \d z
  &\leq C \epsilon^2 \log(\epsilon)^2 +\pi n^2\epsilon^2
  +C n^{-2}+ C n^{-4} \epsilon^{-2}
  +C n^{-2}+Cn^{-4}.
  \end{align*}
  with $\epsilon=n^{-\frac{3}{2}}$, we obtain
  \[\int_{\mathbb{R}^2} |ng_n(z)-l(z)|^2  \d z\leq C' n^{-1}, \]
  which concludes the proof of the second point.

  For the last point, simply remark that $nm\mathbb{E}_{0,y}[D_{n,m}]$ is bounded by $\int_{\R^2} 4\phi(z)\phi(y-z)\d z$, which is integrable over the plane since
  \[\int_{\R^2\times \R^2} \phi(z)\phi(z-y)\d z \d y= \Big(\int_{\R^2} \phi(z)\d z \Big)^2<\infty.\]
\end{proof}

\subsubsection{Proof of Lemma \ref{le:tech2}}
Let us recall this Lemma.
\begin{lemma*}
  For all $c>0$, there exists a constant $C$ such that for all $n,m,T$ with $T<m\leq n<T^{c^{-1}}$, for all $x,y\in \R^2$,
  \[ \mathbb{E}_{x,y}\Big[\Big(
  \sum_{i,j=1}^T \big( nm \mathbb{E}[D^{i,j}_{n,m}|X_{(i-1)T^{-1}},Y_{(j-1)T^{-1}}] -T^{-1} L_{\sqrt{T}(Y_{(j-1)T^{-1}}-X_{(i-1)T^{-1}})}
  \big)\Big)^2\Big]\leq C T^4m^{-1}.
  \]
\end{lemma*}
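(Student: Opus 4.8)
The plan is to make the conditional expectation $\mathbb{E}[D^{i,j}_{n,m}\mid X_{(i-1)T^{-1}},Y_{(j-1)T^{-1}}]$ completely explicit by Brownian scaling, and then to quote the quantitative convergence-in-mean estimate of Corollary~\ref{coro:werner}. Since $D^{i,j}_{n,m}$ is a functional of the two pieces $X^i,Y^j$ alone, the Markov property says that conditionally on $X_{(i-1)T^{-1}}=a$ and $Y_{(j-1)T^{-1}}=b$, the pair $(X^i-a,\,Y^j-b)$ is distributed as a pair of independent Brownian motions run on $[0,T^{-1}]$ and started at $0$. Rescaling time by $T$ and space by $\sqrt T$ turns these into standard Brownian motions on $[0,1]$, multiplies areas by $T^{-1}$, and — because winding numbers and the straight-segment closure are invariant under a joint rescaling of curve and point — leaves the two winding inequalities unchanged after the obvious relabeling of the point. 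Together with translation invariance this gives, almost surely,
\[
\mathbb{E}\big[D^{i,j}_{n,m}\mid X_{(i-1)T^{-1}},Y_{(j-1)T^{-1}}\big]=T^{-1}\,h_{n,m}\big(W_{i,j}\big),\qquad W_{i,j}:=\sqrt T\,\big(Y_{(j-1)T^{-1}}-X_{(i-1)T^{-1}}\big),
\]
where $h_{n,m}(w):=\mathbb{E}_{0,w}[D^{X,Y}_{n,m}]$, which is finite (e.g.\ by Lemma~\ref{le:globalBound}).

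Next I would simply invoke the uniform rate in Corollary~\ref{coro:werner}: there is a constant $C$ such that $\sup_{w\in\R^2}\big|nm\,h_{n,m}(w)-L_w\big|\le C(n^{-1/2}+m^{-1/2})\le 2C\,m^{-1/2}$, the last step using $m\le n$. Hence, for each fixed $(i,j)$, \emph{pointwise} on $\Omega$,
\[
\Big|\,nm\,\mathbb{E}\big[D^{i,j}_{n,m}\mid X_{(i-1)T^{-1}},Y_{(j-1)T^{-1}}\big]-T^{-1}L_{W_{i,j}}\Big|=T^{-1}\big|nm\,h_{n,m}(W_{i,j})-L_{W_{i,j}}\big|\le 2C\,T^{-1}m^{-1/2}.
\]
Summing these $T^2$ inequalities by the triangle inequality yields, almost surely and uniformly in $x,y$,
\[
\Big|\sum_{i,j=1}^T\big(nm\,\mathbb{E}[D^{i,j}_{n,m}\mid X_{(i-1)T^{-1}},Y_{(j-1)T^{-1}}]-T^{-1}L_{W_{i,j}}\big)\Big|\le 2C\,T\,m^{-1/2},
\]
so the expectation in the statement is at most $4C^2T^2m^{-1}\le 4C^2T^4m^{-1}$, which is the claim (indeed slightly more).

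The point worth stressing is that there is essentially no obstacle at this stage: all the analytic work has been absorbed into Corollary~\ref{coro:werner} (itself resting on Werner's asymptotic, Lemma~\ref{le:wendelin}, and the refined estimate of Lemma~\ref{le:techMean}), and the summands here are controlled \emph{deterministically}, so a bare triangle inequality suffices and one does not need to run the "expand the square over four indices $i,j,k,l$" scheme of the general Remark. In fact this crude argument gives the sharper bound $CT^2m^{-1}$; the slacker form $CT^4m^{-1}$ in the lemma is all that is used downstream and leaves ample room. The only things requiring care are the scaling bookkeeping in the first display — the factor $T^{-1}$ for areas versus the factor $\sqrt T$ for the displacement of the starting points — and keeping the bound uniform in the starting points $x,y$, which it is precisely because Corollary~\ref{coro:werner} is.
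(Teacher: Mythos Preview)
Your argument is correct and is essentially identical to the paper's own proof: the paper also observes $\mathbb{E}[D^{i,j}_{n,m}\mid X_{(i-1)T^{-1}},Y_{(j-1)T^{-1}}]=T^{-1}\mathbb{E}_{\sqrt T X_{(i-1)T^{-1}},\sqrt T Y_{(j-1)T^{-1}}}[D^{\hat X,\hat Y}_{n,m}]$ by scaling and then bounds the whole sum by a power of $T$ times $\sup_{x,y}|nm\,\mathbb{E}_{x,y}[D_{n,m}]-L_{x-y}|^2$, invoking Corollary~\ref{coro:werner}. Your bookkeeping in fact yields $CT^2m^{-1}$, which is a bit sharper than the $CT^3m^{-1}$ the paper writes and well within the stated $CT^4m^{-1}$.
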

\begin{proof}
  Remark that
  \[ \mathbb{E}[D^{i,j}_{n,m}|X_{(i-1)T^{-1}},Y_{(j-1)T^{-1}}]= T^{-1} \mathbb{E}_{\sqrt{T}X_{(i-1)T^{-1}},\sqrt{T}Y_{(j-1)T^{-1}}}[D^{\hat{X},\hat{Y}}_{n,m}]. \]
  It follows that the left-hand side in the lemma is smaller than \[T^3 \sup_{x,y\in \R^2} |nm \mathbb{E}_{x,y}[D^{\hat{X},\hat{Y}}_{n,m}]- L_{x-y}|^2,\] which we know to be smaller than $CT^3 m^{-1}$ by Corollary \ref{coro:werner}.
\end{proof}

\subsection{Large $T$ limit}
At this stage,
% we still have to study
% \[ \sum_{i,j=1}^T T^{-1} L_{T^{\frac{1}{2}}(Y_{jT^{-1}}-X_{iT^{-1}} ) }.\]
%Yet, we can say
we have made an important step, since we are now free from the variables $n$ and $m$ (and from any winding function).
Our goal now is to replace the sum with an integral,
for which we need to bound
\[
\sum_{i,j=0}^{T-1} T^{-1} L_{T^{\frac{1}{2}}
(Y_{jT^{-1}}-X_{iT^{-1}} ) }
- T^2\int_0^1\int_0^1 T^{-1} L_{T^{\frac{1}{2}}(Y_s-X_t)} \d s\d t.
\]
Remark that we have shifted the indices $i$ and $j$ by $1$ for convenience.
\begin{lemma}
  \label{le:tech3}
  For all $\epsilon>0$, there exists a constant $C$ such that for all $K$, for all function $f\in \mathcal{C}^2_c(\R^2)$, supported on $B(0,K)$,
  \[\mathbb{E}\Big[
  \Big( T^{-1}\sum_{i,j=0}^{T-1} f(T^{\frac{1}{2}}X_{iT^{-1}}, T^{\frac{1}{2}}Y_{jT^{-1}}) -T\int_0^1\int_0^1 f(\sqrt{T}X_s,\sqrt{T}X_t )\Big)^2\Big]^{\frac{1}{2}}\leq C K \|f\|_\infty T^{-\frac{1}{2}+\epsilon} .
  \]
\end{lemma}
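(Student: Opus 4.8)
Here is the plan. Reduce the two‑dimensional‑in‑time discretisation error to two one‑dimensional ones, and treat each by the Markov property in a way that only ever produces the $L^\infty$ norm of the integrand, so that the $\mathcal C^2$ hypothesis plays no role. Put $W_u=\sqrt T\,X_{u/T}$ and $\widetilde W_v=\sqrt T\,Y_{v/T}$, two independent planar Brownian motions on $[0,T]$, and let $\nu=\sum_{i=0}^{T-1}\delta_{W_i}$, $\mu=\int_0^T\delta_{W_u}\,\d u$, and similarly $\widetilde\nu,\widetilde\mu$ for $\widetilde W$; after rescaling, the quantity in the statement is $T^{-1}\iint f\,\d(\nu\otimes\widetilde\nu-\mu\otimes\widetilde\mu)$. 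Writing $\nu\otimes\widetilde\nu-\mu\otimes\widetilde\mu=(\nu-\mu)\otimes\widetilde\nu+\mu\otimes(\widetilde\nu-\widetilde\mu)$, and noting that the two contributions are interchanged by swapping $W\leftrightarrow\widetilde W$, it suffices to estimate in $L^2$ the quantity $T^{-1}\big(\sum_{i=0}^{T-1}\psi(W_i)-\int_0^T\psi(W_u)\,\d u\big)$, where $\psi(x)=\sum_{j=0}^{T-1}f(x,\widetilde W_j)$ is, conditionally on $\widetilde W$, a bounded function supported in $B(0,K)$ and independent of $W$ (the randomness of $\widetilde W$ is reinstated at the very end). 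This is covered by a one‑dimensional statement needing no smoothness: for $Z$ a planar Brownian motion and $g$ bounded with support in $B(0,K)$,
\[
\Big\|\sum_{i=0}^{T-1}g(Z_i)-\int_0^T g(Z_u)\,\d u\Big\|_{L^2}\le C\,\|g\|_\infty\,(1+K)\,\log T .
\]

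To prove this, decompose each unit‑interval error using the Markov property: with $Q:=\int_0^1 P_w\,\d w$,
\[
g(Z_i)-\int_i^{i+1}g(Z_u)\,\d u=(g-Qg)(Z_i)-\xi_i,\qquad \xi_i:=\int_i^{i+1}\big(g(Z_u)-P_{u-i}g(Z_i)\big)\,\d u,
\]
so that $\sum_i g(Z_i)-\int_0^T g(Z_u)\,\d u=\sum_i(g-Qg)(Z_i)-\sum_i\xi_i$ with $(\xi_i)$ a martingale‑difference sequence. Hence $\mathbb E[(\sum_i\xi_i)^2]=\sum_i\mathbb E[\xi_i^2]\le 4\|g\|_\infty^2\sum_i\mathbb P(Z\text{ meets }B(0,K)\text{ on }[i,i+1])\le C\|g\|_\infty^2(1+K)^2\log T$, using the elementary occupation bound $\sum_{i<T}\mathbb P(|Z_i|\le r)\le C(1+r)^2\log T$ (itself from $\int_0^T(1-e^{-r^2/2u})\,\d u\le Cr^2\log T$). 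The other term is the delicate one, because $g-Qg$ is not small pointwise; but its heat evolution is, and this is the crux: since $\|\Delta_x p_s(x,\cdot)\|_{L^1}=\tfrac4{es}$, one gets for every integer $d\ge0$
\[
\|P_d(g-Qg)\|_\infty=\Big\|\int_0^1(P_d-P_{d+w})g\,\d w\Big\|_\infty\le\|g\|_\infty\int_0^1\log\tfrac{d+w}{d}\,\d w\le\frac{C\|g\|_\infty}{1+d},
\]
i.e. the single derivative is moved off $g$ and onto the kernel, which is precisely what keeps only $\|g\|_\infty$. Expanding the square, for $i\le i'$ one has $\mathbb E[(g-Qg)(Z_i)(g-Qg)(Z_{i'})]=\mathbb E[(g-Qg)(Z_i)\,P_{i'-i}(g-Qg)(Z_i)]$; bounding the first factor by $2\|g\|_\infty\mathbbm{1}_{|Z_i|\le K+C}$ (up to a negligible Gaussian tail) and the second by $C\|g\|_\infty/(1+i'-i)$, the summation over $i'$ contributes a factor $\log T$ and that over $i$ the occupation factor $(1+K)^2\log T$, giving $\mathbb E[(\sum_i(g-Qg)(Z_i))^2]\le C\|g\|_\infty^2(1+K)^2(\log T)^2$; the cross term $\mathbb E[(\sum\xi_i)(\sum(g-Qg)(Z_{i'}))]$ is handled identically. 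Adding the pieces yields the displayed one‑dimensional bound.

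Applying this conditionally on $\widetilde W$ with $g=\psi$, and using $\|\psi\|_\infty\le\|f\|_\infty N$ with $N=\#\{j<T:|\widetilde W_j|\le K\}$ and the occupation‑moment bound $\mathbb E[N^2]\le C(1+K)^4(\log T)^2$, gives $\|T^{-1}(\sum_i\psi(W_i)-\int_0^T\psi(W_u)\,\d u)\|_{L^2}\le C\|f\|_\infty(1+K)^3T^{-1}(\log T)^2$, which is $\le CK\|f\|_\infty T^{-1/2+\epsilon}$ with room to spare in the range of $K$ occurring in the applications (in particular for $K$ fixed). The remaining subtlety is the precise power of $K$: to obtain it one does not go through $\|\psi\|_\infty$, but keeps the $j$‑sum inside the expansion of $\mathbb E[(\sum_{i,j}\eta_{ij})^2]$ with $\eta_{ij}=f(W_i,\widetilde W_j)-\int_i^{i+1}f(W_u,\widetilde W_j)\,\d u$, applies the same Markov decomposition in the $W$‑variable (producing the decay $1/(1+|i-i'|)$), and uses that a planar Brownian motion has only polylogarithmically many indices $j'$ with $\widetilde W_{j'}$ within unit distance of a given $\widetilde W_j$, so that the double $(j,j')$‑sum is effectively over $O(N\cdot\mathrm{polylog}(T))$ pairs; combined with the support constraint $|W_i|^2+|\widetilde W_j|^2\lesssim K^2$, this is where the dependence linear in $K$ comes from. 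I expect this last bookkeeping — rather than any analytic point — to be the main obstacle; the conceptual content is entirely in the one‑dimensional estimate and in the kernel‑derivative bound above.
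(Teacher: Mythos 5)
Your approach is genuinely different from the paper's, and the conceptual core is sound. The paper peels off the two time variables simultaneously by It\^{o}'s formula, obtaining an expression in $P^1_\sigma\Delta^1 P^2_\tau\Delta^2 f$ and then treating the quadruple sum $\sum_{i,j,k,l}\mathbb{E}[Z_{i,j}Z_{k,l}]$ case by case according to the ordering of the indices. You instead telescope $\nu\otimes\widetilde\nu-\mu\otimes\widetilde\mu=(\nu-\mu)\otimes\widetilde\nu+\mu\otimes(\widetilde\nu-\widetilde\mu)$, reducing everything to a clean one--dimensional Riemann--sum statement which you handle via the decomposition $g(Z_i)-\int_i^{i+1}g(Z_u)\,\d u=(g-Qg)(Z_i)-\xi_i$ with $(\xi_i)$ a martingale--difference sequence. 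Both proofs rest on the same analytic fact --- that $\|\Delta P_t g\|_\infty\lesssim t^{-1}\|g\|_\infty$, so that derivatives fall on the heat kernel and not on $g$ --- but your one--dimensional reduction is more transparent, and your decomposition also gives the sharper rate $T^{-1}(\log T)^2$ in place of $T^{-1/2+\epsilon}$. (As an aside, you compute $\|\Delta p_s\|_{L^1}=4/(es)$ correctly; the paper's intermediate claim that $\|\partial_t p_t\|_{L^1}\propto t^{-2}$ appears to be a slip, but this only changes $|k-i|^{-2}$ to $|k-i|^{-1}$ in the paper's sums and costs it a logarithm that is absorbed in $\epsilon$.)

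There is a genuine gap, however, and you have identified it yourself: the $K$--dependence. Your argument, going through $\|\psi\|_\infty\le\|f\|_\infty N$ and $\mathbb{E}[N^2]^{1/2}\lesssim(1+K)^2\log T$, yields $(1+K)^3T^{-1}(\log T)^2$, whereas the lemma claims the linear bound $K\|f\|_\infty T^{-1/2+\epsilon}$. This is not merely cosmetic: Corollary \ref{coro:tech3} is proved by applying the lemma to a dyadic decomposition with $K_k\sim 2^k$ and then summing $\sum_k K_k\|f_k\|_\infty$, and that sum only converges under the hypothesis $\sup_z(1+|z|)f(z)<\infty$ because the $K$--dependence is linear; with $K_k^3$ the series $\sum_k 2^{2k}\cdot\sup_z(1+|z|)f_k(z)$ diverges for functions decaying only like $1/(1+|z|)$. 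You gesture at a fix --- keep the $(j,j')$--sum inside the expansion of $\mathbb{E}[(\sum_{i,j}\eta_{ij})^2]$ and exploit the neighbourhood--recurrence count for the planar Brownian motion --- but this is only a sketch, and it is exactly the kind of bookkeeping where a power of $K$ can be lost or gained; it needs to be carried out to count as a proof. A second, smaller worry is interpretive: the paper's usage (the appearance of $\mathbbm{1}_{|x-y|\le K}$ and of $\mathbb{P}_x(|Y_{(j+v)T^{-1}}-X_{(i+u)T^{-1}}|\le KT^{-1/2})$) suggests the support condition is $\{|x-y|\le K\}$ rather than $\{|x|\le K\}$; under that reading, $\psi(x)=\sum_j f(x,\widetilde W_j)$ is supported on $\bigcup_j B(\widetilde W_j,K)$ and no longer on a fixed ball $B(0,K)$, so your one--dimensional occupation bound would need to be replaced by a two--Brownian--motion collision estimate of the form $\sum_{i,j}\mathbb{P}(|W_i-\widetilde W_j|\le K)\lesssim K^2 T\log T$, which would change the bookkeeping again. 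So: the one--dimensional lemma and the kernel--derivative bound are correct and the route is elegant, but the proof as written establishes a weaker statement than the lemma, sufficient for the concrete application to $f=L$ (which decays Gaussian) but not for the lemma or the corollary as stated for general $f$.
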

\begin{proof}
  Let
  \[
  G_{i,j}^{s,t}=f(T^{\frac{1}{2}}X_{iT^{-1}+s}, T^{\frac{1}{2}}Y_{jT^{-1}+t}), \quad
  H_{i,j}^{s,t}=G_{i,j}^{s,t}-G_{i,j}^{0,0} %= f(T^{\frac{1}{2}}X_{iT^{-1}+s}, T^{\frac{1}{2}}Y_{jT^{-1}+t})-f(T^{\frac{1}{2}}X_{iT^{-1}}, T^{\frac{1}{2}}Y_{jT^{-1}})
  \]
  and
  \begin{align*}
  Z_{i,j}&= T^2 \int_0^{T^{-1}} \int_0^{T^{-1}} f(T^{\frac{1}{2}}X_{iT^{-1}+s}, T^{\frac{1}{2}}Y_{jT^{-1}+t})\d s \d t-f(T^{\frac{1}{2}}X_{iT^{-1}}, T^{\frac{1}{2}}Y_{jT^{-1}})\\
  &=T^2\int_0^{T^{-1}} \int_0^{T^{-1}} H_{i,j}^{s,t} \d s\d t.
  \end{align*}
  so that the right-hand side in the lemma is $T^{-\frac{1}{2}-\epsilon}\sum_{i,j=1}^T Z_{i,j} $.
  Our goal is therefore to show that, for all $\epsilon>0$, there exists a constant $C$ such that for $T$ large enough,
  \[\sum_{i,j,k,l} \mathbb{E}[Z_{i,j}Z_{k,l}]\leq C T^{1+\epsilon}.\]

  The proof is rather long, and we skip some details. %It will contain three parts. In the first one, we bound
%  $\mathbb{E}[Z_{i,j}Z_{k,l} ]$ when $i<j-2$ and $k<l-2$. By symmetry, we also control this quantity when $i-2>j$ and $k-2>l$. In the second part, we control the terms with
%  $i<j-2$ and $k-2>l$ (and also the ones with $i-2>j$ and $k<l-2$). In the third one, we control the diagonal and almost diagonal terms when $|i-j|\leq 2$ or $|k-l|<2$.

  Let $P^1_s f :(x,y)\mapsto \int_{\R^2} p_t(x,z) f(z,y)$ and $P^2_s f :(x,y)\mapsto \int_{\R^2} p_t(y,z) f(x,z)$. Let also
  $\nabla^1$ (resp. $\nabla^2$) be the differential with respect to the first (resp. second) variable,
  $\Delta^1$ (resp. $\Delta^2$) be the Laplacian with respect to the first (resp. second variable):
  \[\Delta^1 f:(x,y)\mapsto \Delta(z\mapsto f(z,y) )(x), \quad \Delta^2 f:(x,y)\mapsto \Delta(z\mapsto f(x,z) )(z) .\]

  From the It\^o formula (applied first to $Y$ and then to $X$), we have
  \begin{align}
  H_{i,j}^{s,t}=&f(T^{\frac{1}{2}}X_{iT^{-1}+s},T^{\frac{1}{2}}Y_{jT^{-1}+t })
  -f(T^{\frac{1}{2}}X_{iT^{-1}+s},T^{\frac{1}{2}}Y_{jT^{-1}})\nonumber\\
  &+f(T^{\frac{1}{2}}X_{iT^{-1}+s},T^{\frac{1}{2}}Y_{jT^{-1} })
  -f(T^{\frac{1}{2}}X_{iT^{-1}},T^{\frac{1}{2}}Y_{jT^{-1} })\nonumber\\
  =&\int_0^t T^{\frac{1}{2}} \nabla^2 \big(f(T^{\frac{1}{2}}X_{iT^{-1}+s},T^{\frac{1}{2}}Y_{jT^{-1}+\tau }
  -f(T^{\frac{1}{2}}X_{iT^{-1}},T^{\frac{1}{2}}Y_{jT^{-1}+\tau } )
  )\big) \d Y_{jT^{-1}+\tau }\nonumber \\
  &+\frac{1}{2}
  \int_0^t T\Delta^2 \big(f(T^{\frac{1}{2}}X_{iT^{-1}+s},T^{\frac{1}{2}}Y_{jT^{-1}+\tau } )
  -f(T^{\frac{1}{2}}X_{iT^{-1}},T^{\frac{1}{2}}Y_{jT^{-1}+\tau } )\big)
  \d \tau, \nonumber\\
  \mathbb{E}[H_{i,j}^{s,t}| X, Y_{jT^{-1} }]=&\frac{1}{2}
  \int_0^t T P^2_\tau \Delta^2 \big(f(T^{\frac{1}{2}}X_{iT^{-1}+s},T^{\frac{1}{2}}Y_{jT^{-1} } )
  - f(T^{\frac{1}{2}}X_{iT^{-1}},T^{\frac{1}{2}}Y_{jT^{-1} } )\big)
  \d \tau\hspace{-0.2cm}\label{eq:ito2}\\
  =&\frac{1}{2}\int_0^t \int_0^s T^{\frac{3}{2}}\nabla^1 P^2_\tau \Delta^2 f(T^{\frac{1}{2}}X_{iT^{-1}+\sigma},T^{\frac{1}{2}}Y_{jT^{-1} } ) \d X_{iT^{-1}+\sigma}  \d \tau\nonumber\\
  &+\frac{1}{4}\int_0^t \int_0^s T^2 \Delta^1 P^2_\tau \Delta^2 f(T^{\frac{1}{2}}X_{iT^{-1}+\sigma},T^{\frac{1}{2}}Y_{jT^{-1} } ) \d {\sigma} \d \tau,\nonumber\\
  \mathbb{E}[H_{i,j}^{s,t}| X_{iT^{-1}}, Y_{jT^{-1} }]=&
  \frac{T^2}{4}\int_0^t \int_0^s P^1_\sigma \Delta^1 P^2_\tau \Delta^2 f(T^{\frac{1}{2}}X_{iT^{-1}},T^{\frac{1}{2}}Y_{jT^{-1} } ) \d {\sigma} \d \tau.\label{eq:ito3}
  \end{align}

  \diams Let $i<j-2$, $k<l-2$, and $u,v,s,t\in [0,T^{-1}]$. Then, we get
  \begin{align}
  \mathbb{E}_{x,y}&\Big[f(T^{\frac{1}{2}} X_{iT^{-1}+s },T^{\frac{1}{2}} X_{jT^{-1}+t }  )H^{u,v}_{k,l} \Big]\nonumber\\
  =&
  \mathbb{E}_{x,y}\Big[f(T^{\frac{1}{2}} X_{iT^{-1}+s },T^{\frac{1}{2}} X_{jT^{-1}+t }  ) \mathbb{E}[H^{u,v}_{k,l}| X_{iT^{-1}+s}, Y_{jT^{-1}+t}] ]\nonumber\\
  =&\frac{T^2}{4}\int_0^u\int_0^v
  \mathbb{E}_{x,y}\Big[f(T^{\frac{1}{2}} X_{iT^{-1}+s },T^{\frac{1}{2}} X_{jT^{-1}+t }  ) \\
  &\hspace{2cm}P^1_{(j-i)T^{-1}-s +\sigma } \Delta^1 P^2_{(k-l)T^{-1}-t+\tau} \Delta^2f(T^{\frac{1}{2}} X_{iT^{-1}+s },T^{\frac{1}{2}} X_{jT^{-1}+t }  )
  \Big]\d \sigma \d \tau.\nonumber
  \end{align}
  Using
  \[ \|\partial_t p_t(0,\cdot)\|_{L^1}=\frac{1}{2\pi t} \int_0^\infty e^{-\frac{r^2}{2t}}\Big|\frac{r^2-2t}{2t^3}\Big|\d r=t^{-2}\|(\partial_t p_t(0,\cdot))_{|t=1}\|_{L^1},\]
  and using the Young's convolution inequality $\|f*g\|_{L^\infty}\leq \|f\|_{L^\infty}\|g\|_{L^1}$, we deduce that
  \[ \|\Delta^1P^1_t f\|_{L^\infty}=\| f*\Delta p_t(0,\cdot)\|_{L^\infty}=\| f*\partial_t p_t(0,\cdot)\|_{L^\infty}\leq C t^{-2} \|f\|_{L^\infty},\]
  and therefore
  \[ \|\Delta^2P^2_s \Delta^1P^1_t f\|_{L^\infty}\leq C^2 s^{-2} t^{-2} \|f\|_{L^\infty}.\]

  It follows that, for all $x,y\in \R^2$, $|f(x,y)P^2_t P^1_s \Delta^1\Delta^2  \Delta f (x,y)|\leq \mathbbm{1}_{|x-y|\leq K}C s^{-2}t^{-2} \|f\|^2_\infty  $, and therefore
  \begin{align*}
  \mathbb{E}_{x,y}&\Big[
  f(T^{\frac{1}{2}}X_{(i+u)T^{-1}},T^{\frac{1}{2}}Y_{(j+v)T^{-1}})
  P^1_{k-i-u+tT^{-1}} P^2_{l-j-v+sT^{-1}}\Delta^1\Delta^2 f (T^{\frac{1}{2}}X_{(i+u)T^{-1} },T^{\frac{1}{2}}Y_{(j+v)T^{-1} } )\Big]\\
  &  \leq \mathbb{P}_x (Y_{(j+v)T^{-1}}- X_{(i+u)T^{-1}} \leq K T^{-\frac{1}{2}}   ) C\|f\|_\infty  (k-i-u+tT^{-1})^{-2}(l-j-v+sT^{-1})^{-2}  \\
&\leq C' K^2 \|f\|_\infty^2 (i+j+1)^{-1}|k-i-1|^{-2}|l-j-1|^{-2}.
  \end{align*}
  With \eqref{eq:ito3}, we get
  \[\mathbb{E}_{x,y}\Big[f(T^{\frac{1}{2}}X_{(i+u)T^{-1}},T^{\frac{1}{2}}Y_{(j+v)T^{-1}}) Z_{k,l} \Big]\leq
  C K^{2} \|f\|_\infty^2  (i+j+1)^{-1}|k-i-1|^{-2}|l-j-1|^{-2}.
  \]
  We deduce that
  \[
  \mathbb{E}_{x,y}[H^{s,t}_{i,j}H^{u,v}_{k,l}]\leq
  C K^{2} \|f\|^2_\infty  (i+j+1)^{-1}|k-i-1|^{-2}|l-j-1|^{-2},
  \]
  and therefore
  \[
  \mathbb{E}_{x,y}[Z_{i,j}Z_{k,l}]\leq
  C K^2 \|f\|^2_\infty   (i+j+1)^{-1}|k-i-1|^{-2}|l-j-1|^{-2}.
  \]

  Summing over $i$ and $j$, we get
  \begin{align*}
  \sum_{\substack{i,j,k,l=0\\|i-j|\geq 2, |k-l|\geq 2,\\ i<k, j<l} }^{T-1}
  \mathbb{E}_{x,y}[Z_{i,j}Z_{k,l} ]
  &\leq CK^2\|f\|^2_\infty\hspace{-0.3cm}
  \sum_{\substack{i,j,k,l=0\\|i-j|\geq 2, |k-l|\geq 2,\\ i<k, j<l} }^{T-1}\hspace{-0.3cm}
 (i+j+1)^{-1}|k-i-1|^{-2}|l-j-1|^{-2}\\
  &\leq CK^2\|f\|^2_\infty T.
  \end{align*}
  %That was the easy case.

\diams Now we need to deal with the case  $i>k+1$, $j+1<l$, for which we are going to sacrifice an additional factor $T^\epsilon$. The reason for that is that we will end up with $(P^1_t\Delta^1f) (P^2_t\Delta^2f)$ instead of $f(P^1_tP^2_s\Delta^1\Delta^2f)$. As opposed to the latter, the former is not compactly supported anymore, and we have to mimic a compact support by treating separately the cases
  \[\{ |X_i^t-Y_j^s|\geq T^{\epsilon-\frac{1}{2}} \} \quad \mbox{and} \quad \{ |X_i^t-Y_j^s|\leq T^{\epsilon-\frac{1}{2}} \}.
  \]

  Using \eqref{eq:ito3}, we have
  \begin{align*}
  \mathbb{E}&[ \mathbbm{1}_{|X_{kT^{-1}}-Y_{jT^{-1}} |\leq T^{\epsilon-\frac{1}{2}}  }  H^{s,t}_{i,j}H^{u,v}_{k,l} ]\\
  =&\mathbb{E}[\mathbbm{1}_{|X_{kT^{-1}}-Y_{jT^{-1}} |\leq T^{\epsilon-\frac{1}{2}}  }  \mathbb{E}[H^{s,t}_{i,j}|X_{(k+1)T^{-1}}, Y] \mathbb{E}[H^{u,v}_{k,l}| X, Y_{(j+1)T^{-1}}]\\
  =&\frac{T^2}{4}\int_0^s \int_0^v \mathbb{E}\Big[\mathbbm{1}_{|X_{kT^{-1}}-Y_{jT^{-1}} |\leq T^{\epsilon-\frac{1}{2}}  }\\
  &\hspace{1cm}\big(P^1_{i-k-1+\sigma}\Delta^1 f(T^{\frac{1}{2}}X_{(k+1)T^{-1}}, T^{\frac{1}{2}}Y_{jT^{-1}+t })
  -P^1_{i-k-1+\sigma}\Delta^1 f(T^{\frac{1}{2}}X_{(k+1)T^{-1}}, T^{\frac{1}{2}}Y_{jT^{-1} })\big)\\
  &\big( P^2_{l-j-1+\sqrt{T}\nu }\Delta^2f(T^{\frac{1}{2}} X_{kT^{-1}+u },T^{\frac{1}{2}}Y_{(j+1)T^{-1} }  )-P^2_{l-j-1+\sqrt{T}\nu }\Delta^2f(T^{\frac{1}{2}} X_{kT^{-1} },T^{\frac{1}{2}}Y_{(j+1)T^{-1} }  )\big)
  \Big] \d \sigma \d \nu.
  \end{align*}
  We only treat one of the four terms obtained after developing the product,%{\color{red} le faire, c'est juste une grosse ligne...}
  \begin{align*}
  E=&\mathbb{E} \big[\mathbbm{1}_{|X_{kT^{-1}}-Y_{jT^{-1}} |\leq T^{\epsilon-\frac{1}{2}}  }
  P^1_{i-k-1+\sigma}\Delta^1 f(T^{\frac{1}{2}}X_{(k+1)T^{-1}}, T^{\frac{1}{2}}Y_{jT^{-1}+t })\\
  &\hspace{6cm}P^2_{l-j-1+\nu }\Delta^2f(T^{\frac{1}{2}} X_{kT^{-1}+u },T^{\frac{1}{2}}Y_{(j+1)T^{-1} }  )\big]\\
  =&
  \mathbb{E}\big[\mathbbm{1}_{|X_{kT^{-1}}-Y_{jT^{-1}} |\leq T^{\epsilon-\frac{1}{2}}  }
  P^1_{i-k-\sqrt{T}u+\sigma}\Delta^1 f(T^{\frac{1}{2}}X_{kT^{-1}+u}, T^{\frac{1}{2}}Y_{jT^{-1}+t })\\
  &\hspace{6cm}P^2_{l-j+\nu-\sqrt{T}t }\Delta^2f(T^{\frac{1}{2}} X_{kT^{-1}+u },T^{\frac{1}{2}}Y_{jT^{-t} }  )\big].
  \end{align*}
  The three other terms are dealt with identically.

  Using again $\|\Delta^1 P^1_t f\|_\infty\leq C t^{-2}\|f\|_\infty$, we obtain
  \begin{align*}
  E&\leq \mathbb{P}(|X_{kT^{-1}}-Y_{jT^{-1}} |\leq T^{\epsilon-\frac{1}{2}}) C \|f\|^2_\infty  (i-k-1)^{-2}(l-j-1)^{-2}\\
  &\leq C\|f\|^2_\infty  T^{2\epsilon} (k+j)^{-1}  (i-k-1)^{-2}(l-j-1)^{-2  },
  \end{align*}
  and it follows that
  \[
  \mathbb{E}_{x,y}[\mathbbm{1}_{|X_{kT^{-1}}-Y_{jT^{-1}} |\leq T^{\epsilon-\frac{1}{2}}  }Z_{i,j}Z_{k,l} ]
  \leq
  C K^4\|f\|^2_\infty  T^{2\epsilon} (k+j)^{-1}  (i-k-1)^{-2}(l-j-1)^{-2  }.
  \]
  Now we sum over $i,j,k,l$:
  \begin{align*}
  \sum_{\substack{i,j,k,l\\ i>k+1,j+1<l}}\hspace{-0.5cm}\mathbb{E}_{x,y}[\mathbbm{1}_{|X_{kT^{-1}}-Y_{jT^{-1}} |\leq T^{\epsilon-\frac{1}{2}}  } Z_{i,j}Z_{k,l} ]&
  \leq
  C K^{4} \|f\|_\infty^2 T^{2\epsilon} \hspace{-0.6cm} \sum_{\substack{i,j,k,l\\ i>k+1,j+1<l}}\hspace{-0.5cm}
  (k+j)^{-1}  (i-k-1)^{-2}(l-j-1)^{-2  }\\
  &\leq C K^4 \|f\|_\infty^2 T^{1+2\epsilon} \log(T).
  \end{align*}

  Now we also need to control
  \[
  \mathbb{E}_{x,y}[\mathbbm{1}_{|X_{kT^{-1}}-Y_{jT^{-1}} |\geq T^{\epsilon -\frac{1}{2}}  } Z_{i,j}Z_{k,l} ],
  \]
  but this is simply bounded by \[8 \|f\|_\infty^2 \mathbb{P}\Big(\exists s,t \in [0,1]: |s-t|\leq T^{-1},\ |X_s-X_t|\geq \frac{ T^{\epsilon-\frac{1}{2} }}{2} \Big), \]
  which it is smaller than
  \[
  8\|f\|_\infty^2\frac{\mathbb{E}[ \sup_{s,t: |s-t|\leq T^{-1} } |X_s-X_t|^p ]}{T^{-\frac{p}{2}+\epsilon p  }},
  \]
  which itself is smaller than $C_p T^{-\epsilon p} \log(T)^{\frac{p}{2}}$ (see \cite[Lemma 1]{Fischer}). For $p>\frac{4}{\epsilon}$, we obtain
  \[
  \sum_{\substack{i,j,k,l\\ i>k+1,j+1<l}}\mathbb{E}_{x,y}[\mathbbm{1}_{|X_{kT^{-1}}-Y_{jT^{-1}} |\geq T^{\epsilon-\frac{1}{2}}  } Z_{i,j}Z_{k,l} ]\leq C \|f\|_\infty^2.
  \]
\diams
  We now need to control the terms for which $|i-j|\leq 2$ or $|k-l|\leq 2$. Fortunately, this is much simpler. First, we have
  \[\mathbb{E}[Z_{i,i}^2]\leq \|f\|_\infty^2,\]
  so that \[\sum_{\substack{i,j,k,l\\ i+j\leq 8,|j-l|,|i-k|\leq 2 }} \mathbb{E}[Z_{i,j}Z_{k,l}]\leq C \|f\|_\infty^2 T.\]
  Then, remark that
  \[
  \mathbb{E}[(H^{s,t}_{i,j})^2]\leq \|f\|^2_\infty \mathbb{P}( |X_{iT^{-1}+s}-Y_{jT^{-1}+t}|\leq KT^{-\frac{1}{2}} )\leq C \|f\|^2_\infty K^2(i+j)^{-1},\]
  from which we deduce that
  \[
  \mathbb{E}[Z_{i,j}^2]\leq C \|f\|^2_\infty K^2(i+j)^{-1}.
  \]
  For $|i-k|\leq 2, | j-l |\leq 2, i+j\geq 8 $, we have $k+l\geq i+j-4\geq \frac{i+j}{2} $, so that
  \[ \mathbb{E}[Z_{i,j}Z_{k,l}]\leq  \mathbb{E}[Z_{i,j}^2]^{\frac{1}{2}} \mathbb{E}[Z_{k,l}^2]^{\frac{1}{2}}\leq C \|f\|^2_\infty K^2(i+j)^{-\frac{1}{2}}(k+l)^{-\frac{1}{2}}\leq 2 C \|f\|^2_\infty K^2(i+j)^{-1},
  \]
  and we deduce that
  \[\sum_{\substack{i,j,k,l\\ |j-l|,|i-k|\leq 2 }} \mathbb{E}[Z_{i,j}Z_{k,l}]\leq C \|f\|_\infty^2 T.\]

\diams
  We now consider the case when $|i-k|\leq 2$ and $|j-l|\geq 2$. We assume $j\leq l$. Then,
  \begin{align*}
  \mathbb{E}[ H^{s,t}_{i,j}H^{u,v}_{k,l}]&=\frac{T}{2}\int_0^v \mathbb{E}\big[H^{s,t}_{i,j} P^2_{l-j+\nu-t }\Delta^2\big( f(T^{\frac{1}{2}}X_{kT^{-\frac{1}{2} }+u},T^{\frac{1}{2}}Y_{jT^{-\frac{1}{2} }+t}   )-f(T^{\frac{1}{2}}X_{kT^{-\frac{1}{2} } },T^{\frac{1}{2}}Y_{jT^{-\frac{1}{2} }+t}   ) \big) \big] \d \nu\\
  &\leq T \mathbb{P}(H^{s,t}_{i,j} \neq 0 ) \|f\|_\infty^2 \int_0^v (l-j+\nu-t)^{-2} \d \nu \leq C' K^2 (i+j)^{-1} \|f\|^2_\infty (l-j-1)^{-2}.
  \end{align*}
  We deduce that
  \[
  \mathbb{E}[ Z_{i,j}Z_{k,l}]
  \leq C K^2 (i+j)^{-1} \|f\|^2_\infty (l-j-1)^{-2},\]
  so that
  \[ \sum_{i,j,k,l: |i-k|\leq 2, |j-l|\geq 2}\mathbb{E}[ Z_{i,j}Z_{k,l}]\leq C K^2 \|f\|^2_\infty T.\]

  Combining all these bounds together, we get
  \[ \sum_{i,j,k,l} \mathbb{E}[ Z_{i,j}Z_{k,l}]\leq C K^2 \|f\|^2_\infty T^{1+\epsilon}.\]
\end{proof}

\begin{corollary}
  \label{coro:tech3}
  For all $\epsilon>0$, there exists a constant $C$ such that for all $f\in \mathcal{C}^0(\R^2,\R_+)$ with $\sup_{z\in \R^2} |(1+z) f(z)|<\infty$,
  \[
  T^{-\frac{1}{2}-\epsilon}\mathbb{E}\Big[
  \Big(\sum_{i,j} f(T^{\frac{1}{2}}X_{iT^{-1}}, T^{\frac{1}{2}}Y_{jT^{-1}}) -T^2\int_0^1\int_0^1 f(T^{\frac{1}{2}}X_s,T^{\frac{1}{2}}X_t )\Big)^2\Big]^{\frac{1}{2}}\leq C \sup_{z\in \R^2} (1+|z|) f(z).
  \]
\end{corollary}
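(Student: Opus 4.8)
The plan is to reduce the statement to Lemma \ref{le:tech3} by decomposing $f$ into dyadic annular pieces. Write $\Psi_T(g)=\sum_{i,j}g(\sqrt T X_{iT^{-1}},\sqrt T Y_{jT^{-1}})-T^2\int_0^1\int_0^1 g(\sqrt T X_s,\sqrt T Y_t)\,\d s\,\d t$; this is linear in $g$, it is $T$ times the quantity controlled by Lemma \ref{le:tech3}, and it satisfies the deterministic bound $|\Psi_T(g)|\le 2T^2\|g\|_\infty$. So Lemma \ref{le:tech3} says exactly: $\|\Psi_T(g)\|_{L^2}\le C K\|g\|_\infty\,T^{1/2+\epsilon}$ for every $g\in\mathcal C^2_c(\R^2)$ supported in $B(0,K)$, uniformly in the starting points. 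Fix a smooth partition of unity $(\chi_k)_{k\ge 0}$ on $\R^2$ with $\chi_0$ supported in $B(0,2)$ and $\chi_k$ supported in $\{2^{k-1}\le|z|\le 2^{k+1}\}$ for $k\ge 1$, and set $f_k=f\chi_k$, so that $f=\sum_k f_k$ and, for each realisation, $\Psi_T(f)=\sum_k\Psi_T(f_k)$ with only finitely many nonzero terms. Each $f_k$ is continuous, supported in $B(0,2^{k+1})$, and, with $M:=\sup_z(1+|z|)f(z)$, satisfies $2^{k+2}\|f_k\|_\infty\le 8M$. The decisive observation is that a direct application of Lemma \ref{le:tech3} to $f$ (or to its truncation at scale $\sqrt T$) is worthless because of the factor $K$, while applied scale by scale the products $2^{k}\|f_k\|_\infty$ are bounded \emph{independently of~$k$}.

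Next I would upgrade Lemma \ref{le:tech3} from $\mathcal C^2_c$ to merely continuous compactly supported functions. For each $k$, mollify $f_k$ into $g_{k,\delta}=f_k*\rho_\delta$, with $\rho_\delta\ge 0$ smooth and supported in $B(0,\delta)$, $\delta\le 1$; then $g_{k,\delta}\in\mathcal C^\infty_c$ is supported in $B(0,2^{k+2})$, $\|g_{k,\delta}\|_\infty\le\|f_k\|_\infty$, and $g_{k,\delta}\to f_k$ uniformly as $\delta\to 0$. By the deterministic bound, $\|\Psi_T(g_{k,\delta})-\Psi_T(f_k)\|_{L^2}\le 2T^2\|g_{k,\delta}-f_k\|_\infty\to 0$, so Lemma \ref{le:tech3} passes to the limit and yields
\[
\|\Psi_T(f_k)\|_{L^2}\ \le\ C\,2^{k+2}\|f_k\|_\infty\,T^{1/2+\epsilon}\ \le\ 8CM\,T^{1/2+\epsilon},
\]
uniformly in $k$ (and in the starting points).

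Finally I would sum over $k$ after splitting at $k^\ast:=\lceil 3\log_2 T\rceil$. For $k\le k^\ast$, use the previous display with $\epsilon/2$ in place of $\epsilon$; since there are $O(\log T)$ such scales this contributes $O\big(\log T\cdot M\,T^{1/2+\epsilon/2}\big)$ to $\|\Psi_T(f)\|_{L^2}\le\sum_k\|\Psi_T(f_k)\|_{L^2}$. For $k>k^\ast$, use the crude bound $\|\Psi_T(f_k)\|_{L^2}\le 2T^2\|f_k\|_\infty\le 4M\,T^2 2^{-k}$, whose sum over $k>k^\ast$ is at most $4M\,T^2 2^{-k^\ast}\le 4M\,T^{-1}$. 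Multiplying by $T^{-1/2-\epsilon}$ gives $O\big(\log T\cdot M\,T^{-\epsilon/2}\big)+O\big(M\,T^{-3/2}\big)$, and since $\log T\cdot T^{-\epsilon/2}$ is bounded on $T\ge 2$ this is $\le C M$, as required (and in fact it tends to $0$). The one genuinely delicate ingredient is this last balancing: one must never feed Lemma \ref{le:tech3} a support radius growing like a power of $T$, which is exactly why the dyadic splitting and the inequality $2^k\|f_k\|_\infty\lesssim M$ are needed, and the logarithm coming from the number of active scales must then be absorbed into the allowed $T^\epsilon$. By contrast the passage from $\mathcal C^2_c$ to continuous $f$ costs nothing, since $\Psi_T$ is, for each fixed $T$, Lipschitz from the sup norm into $L^2(\Omega)$.
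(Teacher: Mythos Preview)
Your proof is correct and follows the same overall strategy as the paper: extend Lemma~\ref{le:tech3} from $\mathcal C^2_c$ to $\mathcal C^0_c$, then decompose $f$ over dyadic annuli and sum. Your extension step via mollification and the deterministic Lipschitz bound $|\Psi_T(g)|\le 2T^2\|g\|_\infty$ is cleaner than the paper's, which appeals to a somewhat vague decomposition with $\sum_k\|f_k\|_\infty=\|f\|_\infty$.

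The real difference is in how the sum over scales is closed. The paper asserts that, because the pieces $f_k$ with $k$ odd (resp.\ even) have disjoint supports, one has $\sum_{k\ \mathrm{odd}}\sup_z(1+|z|)f_k(z)\le \sup_z(1+|z|)f(z)$, and concludes directly. This step is in fact incorrect as stated: disjoint supports give $\max_k\sup_z(1+|z|)f_k(z)\le M$, not the sum, and for $f(z)=M/(1+|z|)$ each term $\sup_z(1+|z|)f_k(z)$ is of order $M$, so the series diverges. (The paper's downstream application is to $f=L$, which has Gaussian tails, and for that specific $f$ the sum does converge; but the corollary in its stated generality is not proved by the paper's argument.) Your treatment avoids this: you only use the scale-uniform bound $2^k\|f_k\|_\infty\lesssim M$, truncate at $k^\ast\approx 3\log_2 T$, absorb the resulting $\log T$ into the allowed $T^\epsilon$, and kill the tail with the crude bound $\|\Psi_T(f_k)\|_{L^2}\le 2T^2\|f_k\|_\infty$. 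That is exactly the right repair.
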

\begin{proof}
  First, remark that Lemma \ref{le:tech3} holds with the condition $f\in \mathcal{C}^2_c(\R^2)$ replaced with the condition $f\in \mathcal{C}^0_c(\R^2,\R_+)$. Indeed, for any such function $f$, let $(f_k)\in \mathcal{C}^2_c(\R^2,\R_+)$ by such that $\sum_{k=1}^\infty f_k(x)=f(x)$ for all $x$. The bound then follow from applying Lemma \ref{le:tech3} on each $f_n$, since
  $\sum_k \|f_k\|_{L^\infty}=\|f\|$ and the supports of the $f_n$ are included on the support of $f$.

  Then, for $f\in \mathcal{C}^0(\R^2,\R_+)$, and $k\in \mathbb{N}\setminus\{0\}$, let $T_k=B(0,2^k)\setminus B(0,2^{k-1})$ and let $T_0=B(0,1)$. Let also $(f_k)$ be a family of positive and continuous functions, such that  $\sum f_k=f$ pointwise and such that $f_k$ is supported on $T_{k+1}\cup T_k$.
  Then,
  \[ 2^{k+1}\|f_k\|_\infty\leq 4 \sup_{z\in \R^2} (1+|z|)f_k(z). \]
  Remark also that from the disjoint supports we have \[\sum_{k\mbox{ \scriptsize odd}} \sup_{z\in \R^2} (1+|z|)f_k(z)\leq \sup_{z\in \R^2} (1+|z|)f(z) \mbox{ and }
\sum_{k\mbox{ \scriptsize even}}\sup_{z\in \R^2}  (1+|z|)f_k(z)\leq \sup_{z\in \R^2} (1+|z|)f(z).\]
  Applying Lemma \ref{le:tech3}  to each of the function $f_k$, and applying the triangle inequality, we deduce
  that
  \begin{align*}
  T^{-\frac{1}{2}-\epsilon}\mathbb{E}\Big[
  \Big(\sum_{i,j} f(T^{\frac{1}{2}}X_{iT^{-1}}, T^{\frac{1}{2}}Y_{jT^{-1}}) -T^2\int_0^1\int_0^1 f(T^{\frac{1}{2}}X_s,T^{\frac{1}{2}}X_t )\Big)^2\Big]^{\frac{1}{2}}
  &\leq 4C \sum_k \sup_{z\in \R^2} (1+|z|)f_k(z)\\
&  \leq 8C \sup_{z\in \R^2} (1+|z|)f(z).
  \end{align*}
\end{proof}

\begin{lemma}
\label{le:tech4}
  Let $p\geq 1$ and $f\in \mathcal{C}^0_b(\R^2)\cap L^p(\R^2)$. Then,
  \[ T^{\frac{1}{2}-\epsilon} \Big(\int_0^1\int_0^1 T f(T^{\frac{1}{2}}(X_s-Y_t)) \d s\d t- \ell^{X,Y}(\R^2) \int_{\R^2} f(x)\d x \Big)\overset{L^p}{\underset{t\to \infty}\longrightarrow }0.\]
\end{lemma}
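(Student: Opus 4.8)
The plan is to compare the $f$-mollified double occupation with the heat-kernel-mollified one. Set $J_T=\int_0^1\int_0^1 T f(T^{1/2}(X_s-Y_t))\,\d s\,\d t$, $c=\int_{\R^2}f$, and $\alpha_\varepsilon=\int_0^1\int_0^1 p_\varepsilon(X_s-Y_t)\,\d s\,\d t$. By the triangle inequality it suffices to show that
\[
\big\|\alpha_{1/T}-\ell^{X,Y}(\R^2)\big\|_{L^p}\qquad\text{and}\qquad \big\|J_T-c\,\alpha_{1/T}\big\|_{L^p}
\]
are both $o(T^{-1/2+\epsilon})$ for every $\epsilon>0$.

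For the first quantity I would establish the rate of $L^p$-convergence of the mollified (mutual) intersection local time, $\|\alpha_\varepsilon-\ell^{X,Y}(\R^2)\|_{L^p}\le C_p\,\varepsilon^{1/2}|\log\varepsilon|^{c_p}$, by moment estimates. Since $X$ and $Y$ are independent, the vector $(X_{s_i}-Y_{t_i})_{1\le i\le 2k}$ is centred Gaussian with covariance matrix $\Sigma$ built from the entries $s_a\wedge s_b+t_a\wedge t_b$, so $\mathbb{E}[(\alpha_\varepsilon-\alpha_{\varepsilon'})^{2k}]$ is an explicit integral over $[0,1]^{4k}$ of factors of the type $\det(\Sigma+\varepsilon\,\mathrm{id})^{-1}$; the determinants degenerate only along the diagonals $\{s_a=s_b,\ t_a=t_b\}$, where the singularity is (barely) integrable, and the regularisation parameter produces the gain $\varepsilon^{1/2}$ after the standard ordering-of-times analysis. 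Taking $\varepsilon=1/T$ gives $o(T^{-1/2+\epsilon})$. (This is one of the properties of $\ell^{X,Y}$ alluded to at the start of the section, and could alternatively be extracted from \cite{LeGall,Geman}.)

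For the second quantity, write $J_T-c\,\alpha_{1/T}=\int_0^1\int_0^1 h_T(X_s-Y_t)\,\d s\,\d t$ with $h_T(z)=Tf(T^{1/2}z)-c\,p_{1/T}(z)$, a kernel at scale $T^{-1/2}$ with $\int_{\R^2}h_T=0$. Passing to the mutual occupation measure $\nu(A)=\int_0^1\int_0^1\mathbbm{1}_A(X_s-Y_t)\,\d s\,\d t$, which a.s.\ admits a jointly continuous density $\rho$ with $\rho(0)=\ell^{X,Y}(\R^2)$, the vanishing integral yields
\[
J_T-c\,\ell^{X,Y}(\R^2)=\int_{\R^2}h_T(z)\big(\rho(z)-\rho(0)\big)\,\d z,
\]
so by Minkowski's integral inequality $\|J_T-c\,\ell^{X,Y}(\R^2)\|_{L^p}\le\int_{\R^2}|h_T(z)|\,\|\rho(z)-\rho(0)\|_{L^p(\Omega)}\,\d z$. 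The input I need here is the quantitative modulus $\|\rho(z)-\rho(0)\|_{L^p(\Omega)}\le C_p|z|^{1-\delta}$ for $|z|\le 1$ (any $\delta>0$) together with a uniform bound $\sup_z\|\rho(z)\|_{L^p(\Omega)}<\infty$ (in fact a Gaussian tail $\le C_p e^{-c|z|^2}$), both coming from the same kind of moment computation — a multilinear Taylor expansion of the Gaussian densities that exploits $\int h_T=0$. Splitting the $z$-integral at a cutoff of size $T^{-1/2+\epsilon/3}$ and using $\|h_T\|_{L^1}\le\|f\|_{L^1}+|c|$, the inner part is $\lesssim (T^{-1/2+\epsilon/3})^{1-\delta}=o(T^{-1/2+\epsilon})$, while the outer part is controlled by a Gaussian tail estimate for $p_{1/T}$ together with $\int_{|w|>T^{\epsilon/3}}|f(w)|\,\d w$, which is negligible for the rapidly decaying $f$ at play (in particular for $f=L$ of Corollary \ref{coro:werner}).

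The genuine obstacle is the moment estimate underlying both steps: integrating negative powers of the covariance determinants over the time simplex, past their diagonal degeneracies, and extracting the correct gain — a half-power of $\varepsilon$ in the first step, and a power of $|z|$ (through the cancellation $\int h_T=0$) in the second. Everything else is bookkeeping; the only real conceptual point, and the reason the scheme closes, is the initial reduction, which replaces an arbitrary mollifier by the heat kernel at the mere cost of a zero-mean remainder.
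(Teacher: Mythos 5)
Your scheme is correct in outline but longer than needed, and it leaves its central analytic input as a promissory note, whereas the paper simply quotes that input from the literature. The observation driving both arguments is the same: the mutual occupation measure $\nu(A)=\int_0^1\int_0^1\mathbbm{1}_A(X_s-Y_t)\,\d s\,\d t$ admits a density $\rho$ (the paper's $\beta(\cdot,\R^2)$) with $\rho(0)=\ell^{X,Y}(\R^2)$ and an $L^p$--H\"older modulus $\mathbb{E}[|\rho(y)-\rho(x)|^p]\leq C|y-x|^{p-\epsilon}$. The paper cites this directly from \cite[(1-b)]{LeGall}; you propose to re-derive it (and the related convergence of the heat-kernel mollification $\alpha_\varepsilon$) via Gaussian moment estimates, which is precisely where you write that ``the genuine obstacle'' lies. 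As written this is a sketch, not a proof — and the step you identify as the real work is the step the paper does not need to do because it is already in the reference.

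On the structure, the detour through $\alpha_{1/T}$ and the zero-mean kernel $h_T$ is superfluous. Once the density $\rho$ exists with $\rho(0)=\ell^{X,Y}(\R^2)$, a single change of variable gives
\[
\int_0^1\int_0^1 T f(T^{1/2}(X_s-Y_t))\,\d s\,\d t- \ell^{X,Y}(\R^2)\int_{\R^2}f
= \int_{\R^2} f(z)\big(\rho(T^{-1/2}z)-\rho(0)\big)\,\d z,
\]
and one applies Minkowski (or, as the paper does, Jensen on the normalised measure $|f|/\|f\|_{L^1}$) together with the H\"older modulus; no comparison with $p_{1/T}$ is needed. The zero-mean property of $h_T$ that you engineer by subtracting $c\,p_{1/T}$ is automatic here: subtracting $\rho(0)\int f$ already produces $\rho-\rho(0)$ under the integral. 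Note also that your displayed identity is off: $\int_{\R^2}h_T(z)(\rho(z)-\rho(0))\,\d z = \int h_T\rho = J_T - c\,\alpha_{1/T}$, not $J_T - c\,\ell^{X,Y}(\R^2)$. The slip is harmless given your triangle-inequality split, but it illustrates that the $h_T$ machinery is an extra layer rather than a shortcut. In short: same key estimate, but the paper uses it in one line by citation, while you rebuild the mollification apparatus and defer the hard part.
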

\begin{proof}
  We use here the definition of the local time $\ell_y$ given in \cite[(1-a)]{LeGall}.

  From this definition, we have $\int_0^1\int_0^1 T f(T^{\frac{1}{2}})(X_s-Y_t))\d s\d t=\int_{\R^2} T f(T^{\frac{1}{2}} y) \beta(y,\mathbb{R}^2) \d y$, with
  $\beta$ that satisfies $\beta(x,\mathbb{R}^2)\underset{x\to 0}\longrightarrow \beta(0,\mathbb{R}^2)=\ell^{X,Y}(\mathbb{R}^2)$.
  In the same paper (numerotation 1-b), the following property is also given: for all $p\geq 1$ and $\epsilon>0$, there exists $C$ such that for all $x$ and $y$ in $\R^2$ and $B$ Borelian,
  \[
  \mathbb{E}[(\beta(y,B)-\beta(x,B))^p]\leq C |x-y|^{p-\epsilon}.\]
  Therefore, we have
  \begin{align*}
  \mathbb{E}\Big[ \Big|\int_0^1\int_0^1 T f(T^{\frac{1}{2}}(X_s-Y_t))\d s\d t&- \ell^{X,Y}(\R^2) \int_{\R^2}f(z) \d z\Big|^p \Big]\\
  &=\mathbb{E}\Big[ \Big| \int_{\R^2} T f(T^{\frac{1}{2}} y) \beta(y,\mathbb{R}^2) \d y -
  \int_{\R^2}f(z)\beta(0,\R^2)   \d z \Big|^p \Big]\\
  &=\mathbb{E}\Big[ \Big| \int_{B(0,K)} f(z) (\beta( T^{-\frac{1}{2}}z,\R^2)-\beta(0,\R^2) )  \d z \Big|^p \Big]\\
  &\leq \pi^{p-1} K^{2p-2} \int_{B(0,K)} |f(z)|^p \mathbb{E}\Big[ (\beta( T^{-\frac{1}{2}}z,\R^2)-\beta(0,\R^2) )^p  \Big] \d z \\
  &\leq \pi^{p-1} K^{3p-2-\epsilon}  T^{-\frac{p}{2}+\frac{\epsilon}{2} } \int_{B(0,K)} |f(z)|^p \d z.
  \end{align*}
\end{proof}

By combining together Lemma \ref{le:tech1}, Lemma \ref{le:tech2}, Corollary \ref{coro:tech3} and Lemma \ref{le:tech4}, and computing $\int_{\R^2} L_y \d y=\frac{1}{4\pi^2}$, we obtain the following proposition.
\begin{proposition}
\label{prop:asymp}
  For all $c,\epsilon>0$, there exist a constant $C$ such that for all $T\leq m\leq n\leq T^{c^{-1}}$,
  \[
  \sup_{x,y\in \R^2} \mathbb{E}\Big[ \Big(\Sigma_{n,m,T}- \frac{\ell^{i,j}(\R^2)}{4\pi^2}\Big)^2\Big]\leq C(T^{-1+\epsilon}+ T^4m^{-1}). \]
\end{proposition}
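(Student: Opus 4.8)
The proposition is the endpoint of the chain of approximations displayed at the start of the section, so the plan is simply to make that chain rigorous as a telescoping identity in $L^2(\mathbb{P}_{x,y})$ and to estimate each gap by the relevant lemma. Writing $\xi_{i,j}=\sqrt T\,(Y_{(j-1)T^{-1}}-X_{(i-1)T^{-1}})$ and recalling that, by Fubini and $\int_{\R^2}p_t(0,z)\,\d z=1$, one has $\int_{\R^2}L_z\,\d z=\tfrac1{4\pi^2}$, I would set
\begin{align*}
A_1&=nm\sum_{i,j=1}^T\Big(D^{i,j}_{n,m}-\mathbb{E}\big[D^{i,j}_{n,m}\,\big|\,X_{(i-1)T^{-1}},Y_{(j-1)T^{-1}}\big]\Big),\\
A_2&=\sum_{i,j=1}^T\Big(nm\,\mathbb{E}\big[D^{i,j}_{n,m}\,\big|\,X_{(i-1)T^{-1}},Y_{(j-1)T^{-1}}\big]-T^{-1}L_{\xi_{i,j}}\Big),\\
A_3&=T^{-1}\sum_{i,j=1}^T L_{\xi_{i,j}}-T\int_0^1\!\!\int_0^1 L_{\sqrt T(Y_t-X_s)}\,\d s\,\d t,\\
A_4&=T\int_0^1\!\!\int_0^1 L_{\sqrt T(Y_t-X_s)}\,\d s\,\d t-\frac{\ell^{X,Y}(\R^2)}{4\pi^2},
\end{align*}
so that $\Sigma_{n,m,T}-\frac{\ell^{X,Y}(\R^2)}{4\pi^2}=A_1+A_2+A_3+A_4$. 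Since the heat kernel is even in its spatial variable, $L$ is an even function, which makes the passage from $Y_t-X_s$ to $X_s-Y_t$ (as it appears in Lemmas \ref{le:tech3}--\ref{le:tech4}) harmless, and a relabelling $i\mapsto i-1$, $j\mapsto j-1$ turns the range $\{1,\dots,T\}$ in $\Sigma_{n,m,T}$ into the range $\{0,\dots,T-1\}$ used in Corollary \ref{coro:tech3}.

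Before invoking the lemmas I would record the properties of the specific function $L$ that they require of their argument. From $L_z=\frac1{4\pi^2}\int_0^1\int_0^1 p_{s+u}(0,z)\,\d s\,\d u$ one reads off that $L$ is continuous; that $L_0=\frac1{4\pi^2}\int_0^1\int_0^1\frac{\d s\,\d u}{2\pi(s+u)}<\infty$, so $L$ is bounded; and that $p_{s+u}(0,z)\le C e^{-|z|^2/4}$ for $|z|\ge 1$ and $s,u\le 1$, so $L_z$ decays faster than any polynomial. Hence $L\in\mathcal{C}^0_b(\R^2)\cap L^p(\R^2)$ for every $p\in[1,\infty)$ and $\sup_{z\in\R^2}(1+|z|)L_z<\infty$, which is precisely what is needed to feed $L$ into Corollary \ref{coro:tech3} and Lemma \ref{le:tech4}.

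Then I would bound the four pieces in $L^2(\mathbb{P}_{x,y})$, uniformly in $x,y$: Lemma \ref{le:tech1} gives $\|A_1\|_{L^2}\le C\,T^{-1/2+\epsilon}$; Lemma \ref{le:tech2} gives $\|A_2\|_{L^2}\le C\,T^{2}m^{-1/2}$; Corollary \ref{coro:tech3} applied to $f=L$, after dividing through by $T$ and using the index relabelling, gives $\|A_3\|_{L^2}\le C\,T^{-1}\cdot T^{1/2+\epsilon}\sup_z(1+|z|)L_z\le C'\,T^{-1/2+\epsilon}$; and Lemma \ref{le:tech4} applied to $f=L$ with $p=2$ gives $\|A_4\|_{L^2}\le C\,T^{-1/2+\epsilon}$. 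By the triangle inequality and $(a+b+c+d)^2\le 4(a^2+b^2+c^2+d^2)$,
\[
\sup_{x,y\in\R^2}\mathbb{E}_{x,y}\Big[\Big(\Sigma_{n,m,T}-\frac{\ell^{X,Y}(\R^2)}{4\pi^2}\Big)^2\Big]\le C\big(T^{-1+2\epsilon}+T^{4}m^{-1}\big),
\]
and replacing $2\epsilon$ by $\epsilon$ yields the statement. All four bounds are valid in the regime $T<m\le n<T^{c^{-1}}$, and the two endpoints of $T\le m\le n\le T^{c^{-1}}$ are absorbed by choosing the constant relative to a slightly smaller $c$, which is immaterial.

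There is no genuine obstacle left at this level: the entire analytic content sits in Lemmas \ref{le:tech1}, \ref{le:tech2}, \ref{le:tech4} and Corollary \ref{coro:tech3}, which are already established, and the proposition is their assembly. The only points demanding attention are bookkeeping ones --- the evenness, boundedness, polynomial decay and $L^p$-integrability of $L$ recorded above (so that Corollary \ref{coro:tech3} and Lemma \ref{le:tech4} genuinely apply with $f=L$), the index shift between $\{1,\dots,T\}$ and $\{0,\dots,T-1\}$, and the computation $\int_{\R^2}L_z\,\d z=\frac1{4\pi^2}$ --- together with the observation that among the four error terms only $\|A_2\|_{L^2}^2\le C T^4 m^{-1}$ fails to be $O(T^{-1+\epsilon})$, which is exactly why the bound in the proposition carries precisely the two displayed terms.
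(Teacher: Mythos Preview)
Your proposal is correct and follows exactly the paper's own approach: the paper's proof of the proposition is the single sentence ``By combining together Lemma \ref{le:tech1}, Lemma \ref{le:tech2}, Corollary \ref{coro:tech3} and Lemma \ref{le:tech4}, and computing $\int_{\R^2} L_y \,\d y=\frac{1}{4\pi^2}$,'' and you have simply written this out as an explicit telescoping sum with the bookkeeping (properties of $L$, index shift, identification of which term produces $T^4 m^{-1}$) made visible.
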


\subsection{Proof of the $L^2$ convergence in Theorem \ref{th:intersection}}

The previous proposition, applied to $n^{\pm}=n\pm T(\sqrt{n}+1)$ and $m^{\pm}=m\pm T(\sqrt{m}+1)$, gives
\[
\sup_{x,y\in \R^2} \mathbb{E}\Big[ \Big(nm \sum_{i,j=1}^T \mathcal{D}^{i,j}_{n+T(\sqrt{n}+1), m+T(\sqrt{m}+1)}- \frac{\ell^{i,j}(\R^2)}{4\pi^2}\Big)^2\Big]\leq C'(T^{-1+\epsilon}+ T^4m^{-1})\]
and
\[
\sup_{x,y\in \R^2} \mathbb{E}\Big[ \Big(nm \sum_{i,j=1}^T \mathcal{D}^{i,j}_{n-T(\sqrt{n}+1), m-T(\sqrt{m}+1)}- \frac{\ell^{i,j}(\R^2)}{4\pi^2}\Big)^2\Big]\leq C'(T^{-1+\epsilon}+ T^4m^{-1}).\]
With $T=m^{\frac{1}{5}}$, and with Proposition \ref{prop:encadrement}, we obtain:

  For all $c,\epsilon>0$, there exist a constant $C$ such that for all $ m\leq n\leq m^{c^{-1}}$,
\[
\sup_{x,y\in \R^2} \mathbb{E}\Big[ \Big(nm D^{X,Y}_{n,m} - \frac{\ell^{i,j}(\R^2)}{4\pi^2}\Big)^2\Big]\leq C(T^{-1+\epsilon}+ T^4m^{-1})\leq 2 Cm^{-\frac{1}{5}+\epsilon}.\]
This concludes the proof of the $L^2$ convergence in Theorem \ref{th:intersection}.

\section{Implementing the bootstrap}
\label{sec:boot}
The idea of the bootstrap is that the decomposition
\[ D^{X,Y}_{n,m}=\sum_{i,j=1}^T D^{i,j}_{n,m}+O(n^{-\alpha}m^{-\beta}),\]
together with the $L^2$-asymptotics
\[ D^{X,Y}_{n,m}= \frac{\ell^{X,Y}(\R^2)}{nm}+ O(n^{-1}m^{-1-\epsilon} )\]
and the asymptotic in mean
\[ \mathbb{E}[D^{X,Y}_{n,m}]= \frac{\mathbb{E}[\ell^{X,Y}(\R^2)]}{nm}+O(n^{-\gamma} m^{-\delta}), \gamma\geq\alpha, \delta\geq \beta,\]
should imply the $L^2$-asymptotics
\[ D^{X,Y}_{n,m}=\frac{\ell^{X,Y}(\R^2)}{nm}+ O(n^{-\alpha}m^{-\beta}).\]

Let us first explains shortly this idea for the set $D^X_{n}$, because this case is much simpler to deal with. Let us assume that we already know the $L^2$ bounds
\[D^X_n=\sum_{i=1}^T D^i_n+R_n, \quad R_n=O(n^{-\alpha}),\]
\[ D^{X}_{n}=\frac{1}{2\pi n}+ O(n^{-\alpha'}),\]
and
\[\mathbb{E}[D^X_n]=\frac{1}{2\pi n}+ O(n^{-\beta}), \]
with $\alpha'<\alpha\leq \beta$. Then, we can improve the middle bound by applying it to the $D^i_n$, which are i.i.d. scaled copies of $D_n^X$, $D_n^i\overset{(d)}= T^{-1}D_n$.
We then get
\[
 \Var(D^X_n)\leq 2 \sum_{i=1}^T  \Var(D^i_n)+2 \Var(R_n)
 =\frac{2}{T}\Var(D^X_n)+ O(n^{-2\alpha})= O(T^{-1} n^{-2 \alpha'}+n^{-2\alpha}),
\]
so that $D^X_n=\frac{1}{2\pi n}+O(n^{\alpha'}T^{-\frac{1}{2}}+ n^{-\alpha}+n^{-\beta} )$: the worst term $n^{-\alpha'}$ has been improved.\footnote{In this particular case, one can simply put the expression
$\frac{2}{T}\Var(D^X_n)$ on the other side of the equation, and directly obtain the bound we are searching for without any recursion, but this is a special case.}

There are at least two additional difficulties that arises in our situation. First, the extra factor $T$ that we gain during the bootstrap came from the independence between the different pieces $D^{i}_n$, $D^j_n$. On the opposite, we do not have independence between $D^{i,j}_{n,m}$ and  $D^{k,l}_{n,m}$, and we will have to show that the contribution from the covariances becomes small as $|i-k|$ and $|j-l|$ becomes large. Besides, the scaling relation is much less trivial in our situation: if we consider for example
\[ \sup_{x,y} \mathbb{E}[D^{1,1}_{n,m}D^{1,1}_{n,m} ],\]
this only scales as $T^{-2}$ instead of $T^{-4}$, so that this extra scaling factor does not compensate the $T^4$ factor coming from the sum over the indices $i,j,k,l$. There is indeed an extra $T^{-2}$ factors coming from the fact that
$D^{i,j}_{n,m}D^{k,l}_{n,m} $ is vanishing as soon as $|X_{iT^{-1}}-Y_{jT^{-1}}|$ or $|X_{kT^{-1}}-Y_{lT^{-1}}|$ is large, but this is less simple to take into account.

\subsection{Limitation of the noise}
\begin{lemma}
  \label{le:boot1}
  Let $\alpha,\beta>0$ and assume that
  \[
  \sup_{x,y} \Big(\mathbb{E}_{x,y}[(D^{X,Y}_{n,m})^2]-\mathbb{E}_{x,y}[\mathbb{E}[D^{X,Y}_{n,m}|\ell^{X,Y}(\R^2) ]^2] \Big) =O(n^{-\alpha}m^{-\beta} ).
  \]
  Then, for all $\epsilon>0$,
  \[
  \sup_{x,y}
   \mathbb{E}_{x,y}\Big[\Big(\sum_{i,j=1}^T  (D^{i,j}_{n,m}-\mathbb{E}[D^{i,j}_{n,m}|\ell^{i,j}(\R^2), X_{(i-1)T^{-1}}, Y_{(j-1)T^{-1}} ])     \Big)^2 \Big] =O ( T^{-1+\epsilon} n^{-\alpha}m^{-\beta} ).
  \]
\end{lemma}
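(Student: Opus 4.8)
The plan is to follow the template of the proof of Lemma \ref{le:tech1}: expand the square over the four indices $i,j,k,l$, split according to the relative order of $(i,j)$ and $(k,l)$, kill the ``aligned'' off-diagonal terms by a martingale argument, and estimate the remaining ones by disintegrations — the genuinely new point being that the hypothesis must be carried through where, in Lemma \ref{le:tech1}, one used the cruder Lemma \ref{le:globalBound}. Write $\mathcal{G}^{i,j}\coloneqq\sigma\big(\ell^{i,j}(\R^2),X_{(i-1)T^{-1}},Y_{(j-1)T^{-1}}\big)$ and $Q^{i,j}\coloneqq D^{i,j}_{n,m}-\mathbb{E}[D^{i,j}_{n,m}\mid\mathcal{G}^{i,j}]$, so the quantity to bound is $\sum_{i,j,k,l=1}^{T}\mathbb{E}_{x,y}[Q^{i,j}Q^{k,l}]$. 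The basic reduction is Brownian scaling: conditionally on $(X_{(i-1)T^{-1}},Y_{(j-1)T^{-1}})=(a,b)$, the pieces $X^i,Y^j$ are independent Brownian motions from $a,b$, and for $\hat{X}_s\coloneqq\sqrt{T}X^i_{(i-1)T^{-1}+sT^{-1}}$, $\hat{Y}_s\coloneqq\sqrt{T}Y^j_{(j-1)T^{-1}+sT^{-1}}$ one has $D^{i,j}_{n,m}=T^{-1}D^{\hat{X},\hat{Y}}_{n,m}$ (winding is scale invariant) and $\ell^{i,j}(\R^2)=T^{-1}\ell^{\hat{X},\hat{Y}}(\R^2)$ (the intersection measure has the usual scaling; only the equality of $\sigma$-algebras matters). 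Hence
\[
\mathbb{E}\big[(Q^{i,j})^2\bigm| X_{(i-1)T^{-1}}=a,\,Y_{(j-1)T^{-1}}=b\big]=T^{-2}V_0\big(\sqrt{T}(a-b)\big),
\]
with $V_0(z)\coloneqq\mathbb{E}_{0,z}[(D^{\hat{X},\hat{Y}}_{n,m})^2]-\mathbb{E}_{0,z}\big[\mathbb{E}[D^{\hat{X},\hat{Y}}_{n,m}\mid\ell^{\hat{X},\hat{Y}}(\R^2)]^2\big]$, which depends on $(a,b)$ only through $a-b$. The hypothesis is exactly $0\le V_0(z)\le Cn^{-\alpha}m^{-\beta}$; on the other hand, since $D^{\hat{X},\hat{Y}}_{n,m}\le D^{\hat{X},\hat{Y}}_{2,2}$ and the sets $\mathcal{D}^{\hat{X}}_2,\mathcal{D}^{\hat{Y}}_2$ lie inside balls centred at $0$ and $z$ whose radii have Gaussian tails (a point enclosed by a closed curve lies in its convex hull), these balls must overlap for the integrand to be nonzero, so $V_0(z)\le\mathbb{E}_{0,z}[(D^{\hat{X},\hat{Y}}_{2,2})^2]\le Ce^{-c|z|^2}$ uniformly in $n,m$. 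Combining the two bounds yields the key integrated estimate
\[
\int_{\R^2}V_0(z)\,dz\le C\log(nm)^{c}\,n^{-\alpha}m^{-\beta}.
\]

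The diagonal terms are then easy: disintegrating $\mathbb{E}_{x,y}[(Q^{i,j})^2]$ over the starting points, rescaling and convolving the two heat kernels and bounding the resulting Gaussian of variance $\sim(i+j)T^{-1}$ by its supremum, one gets, for $i,j\ge 2$,
\[
\mathbb{E}_{x,y}[(Q^{i,j})^2]\le\frac{T^{-2}}{2\pi(i+j-2)}\int_{\R^2}V_0(w)\,dw\le C\log(nm)^{c}\,\frac{T^{-2}}{i+j}\,n^{-\alpha}m^{-\beta},
\]
the degenerate cases $i=1$ or $j=1$ giving a harmless $\le CT^{-2}n^{-\alpha}m^{-\beta}$ for the $O(T)$ such pairs, so that $\sum_{i,j}\mathbb{E}_{x,y}[(Q^{i,j})^2]\le C\log(nm)^{c}T^{-1}n^{-\alpha}m^{-\beta}$ since $\sum_{i,j=1}^{T}(i+j)^{-1}=O(T)$. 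For the ``aligned'' off-diagonal terms, namely $i>k$ and $j>l$ (or $i<k$ and $j<l$), I claim $\mathbb{E}[Q^{i,j}Q^{k,l}]=0$: by the Markov property $(X^k,Y^l)$ is conditionally independent of $(X^i,Y^j)$ given $(X_{(i-1)T^{-1}},Y_{(j-1)T^{-1}})$, and since $\ell^{i,j}(\R^2)$ is a function of $(X^i,Y^j)$ this conditional independence persists given $\mathcal{G}^{i,j}$; as $Q^{i,j}$ is $\sigma(X^i,Y^j)$-measurable with $\mathbb{E}[Q^{i,j}\mid\mathcal{G}^{i,j}]=0$ and $Q^{k,l}$ is $\sigma(X^k,Y^l)$-measurable, conditioning on $\mathcal{G}^{i,j}\vee\sigma(X^k,Y^l)$ gives the vanishing.

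There remain the terms where one index coincides ($i=k$ or $j=l$) or the two disagree ($i<k,\,j>l$ or $i>k,\,j<l$). For these I would proceed as in the corresponding parts of the proof of Lemma \ref{le:tech1}, disintegrating over the starting points of the sub-pieces involved (and, where needed, over the relevant endpoints), using that conditionally on those points the distinct sub-pieces become independent (possibly bridged) processes, so that $Q^{\cdot,\cdot}Q^{\cdot,\cdot}$ decouples into a product of conditional expectations; each factor's conditional $L^2$-norm is then controlled, after rescaling, by $V_0$ and hence by the hypothesis, while the decay factors $(i+j)^{-1}$ and $|k-i|^{-1}$, $|l-j|^{-1}$ come from the heat kernels produced by the disintegration (some of them weighted by $V_0$, where one uses $\int V_0\le C\log(nm)^{c}n^{-\alpha}m^{-\beta}$ instead of its $L^\infty$ norm). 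The discrepancy between the bridges that appear and genuine Brownian motions is handled as in the Remark after Lemma \ref{le:tech1}, by restricting to the event that the relevant H\"older norms are at most $T^\epsilon$, at the cost of factors $T^{O(\epsilon)}$. Summing all the regimes against $\sum_{i,j}(i+j)^{-1}=O(T)$ and $\sum_{l}|l-j|^{-1}=O(\log T)$ then gives
\[
\sum_{i,j,k,l=1}^{T}\mathbb{E}_{x,y}[Q^{i,j}Q^{k,l}]\le C\log(nm)^{c'}\,T^{-1}n^{-\alpha}m^{-\beta}=O\big(T^{-1+\epsilon}n^{-\alpha}m^{-\beta}\big),
\]
uniformly in $x,y$, which is the claim.

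I expect the main obstacle to be precisely this last block of cross terms: one must simultaneously keep the separation-in-index decay (as in Lemma \ref{le:tech1}) and upgrade the exponents from $(2,2)$ to the hypothetical $(\alpha,\beta)$, which may be strictly larger — in the bootstrap one typically has $\beta>1$, so that the crude bound $\int_{\R^2}\mathbb{E}_{0,w}[(D^{\hat{X},\hat{Y}}_{n,m})^2]\,dw\lesssim\log(nm)^{c}n^{-2}m^{-1}$ (obtained as in Lemma \ref{le:globalBound}) does not suffice and one genuinely has to combine it with the $|z|$-decay of $\mathbb{E}_{0,z}[(D^{\hat{X},\hat{Y}}_{n,m})^2]$. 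Everything else — the scaling identity, the integrated estimate on $V_0$, the martingale argument for the aligned terms, and the combinatorial summation — is routine.
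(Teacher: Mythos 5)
Your core set-up (scaling identity for $V_0$, integrated bound on $V_0$, diagonal estimate, vanishing of the \emph{aligned} cross terms, and the combinatorics) is sound, but there is a genuine gap: with $Q^{i,j}=D^{i,j}_{n,m}-\mathbb{E}[D^{i,j}_{n,m}\mid\ell^{i,j}(\R^2),X_{(i-1)T^{-1}},Y_{(j-1)T^{-1}}]$ the \emph{anti-aligned} cross terms $\mathbb{E}[Q^{i,j}Q^{k,l}]$ with (say) $i<k$ and $j>l$ do \emph{not} vanish. Indeed, the conditional independence you invoke fails there: given only $X_{(i-1)T^{-1}}$, the later piece $X^k$ is \emph{not} independent of $X^i$ (it feels $X_{iT^{-1}}$, which is part of $X^i$). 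Your plan to estimate these terms ``as in the corresponding parts of the proof of Lemma~\ref{le:tech1}'' is misdirected, because in Lemma~\ref{le:tech1} those terms are \emph{also killed by a martingale argument} (the $P^{i,j}$ there carry the extra endpoint $Y_{jT^{-1}}$, and $\sigma_i$ contains all the $Y$-marks precisely so that every cross term with $i\neq k$ vanishes); there is no ``corresponding part'' to copy. Estimating the anti-aligned block directly would require a full bridge-versus-Brownian-motion analysis of the kind carried out for $E^{i,j}$ in Lemma~\ref{le:boot2}, which is long and, in that lemma, only yields $T^{-1/2}$ rather than the claimed $T^{-1+\epsilon}$ — so you would at best prove a weaker statement, and would have to re-balance the whole bootstrap.

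The paper sidesteps this entirely by a small but decisive change in the definition: it works with
\[
Q^{i,j}_{n,m}=D^{i,j}_{n,m}-\mathbb{E}\big[D^{i,j}_{n,m}\bigm|\ell^{i,j}(\R^2),\,X_{(i-1)T^{-1}},\,X_{iT^{-1}},\,Y_{(j-1)T^{-1}}\big],
\]
i.e.\ it also conditions on the \emph{endpoint} $X_{iT^{-1}}$. Then for $i<k,\ l<j$ the algebra $\sigma^{i,j}=\sigma\big(X_{(i-1)T^{-1}},X_{iT^{-1}},Y_{(j-1)T^{-1}},X^k,Y^l\big)$ makes $Q^{k,l}$ measurable while $\mathbb{E}[Q^{i,j}\mid\sigma^{i,j}]=0$, because given both endpoints of $X^i$ together with the start of $Y^j$, the pair $(X^i,Y^j)$ is conditionally independent of $(X^k,Y^l)$ (forward Markov for $X$, backward Markov for $Y$). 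Combined with the easy aligned case, this kills \emph{every} cross term with $i\neq k$ and $j\neq l$, so only the diagonal and the one-index-coinciding terms $\mathbb{E}[Q^{i,j}Q^{i,l}]$ remain to be estimated — exactly the computations you sketched, which do go through with the hypothesis and the localisation in $|X_{(i-1)T^{-1}}-Y_{(j-1)T^{-1}}|$. The extra endpoint is then removed at the end by the same symmetry/$L^2$-projection argument as in the conclusion of Lemma~\ref{le:tech1}: by symmetry the bound also holds for the version conditioned on $Y_{jT^{-1}}$ instead, and the difference between the two conditionings is itself a conditional expectation of one of these centred quantities, hence inherits the same $L^2$ bound. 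You should adopt this extra conditioning; the rest of your plan then matches the paper.
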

\begin{proof}
  %
  %
  % Anyway, we study
  % \[\sum_{i,j,k,l} \mathbb{E}_{x,y}\Big[ (D^{i,j}_{n,m}-\mathbb{E}[D^{i,j}_{n,m} ])(D^{k,l}_{n,m}-\mathbb{E}[D^{k,l}_{n,m} ]) \Big] \]
  The proof mimics the proof of Lemma \ref{le:tech1}.
  We set \[Q^{i,j}_{n,m}=D^{i,j}_{n,m}-\mathbb{E}[D^{i,j}|\ell^{i,j}(\R^2), X_{(i-1)T^{-1}}, X_{iT^{-1}} , Y_{(j-1)T^{-1}}   ].\]

  \diams For $i<k$ and $j<l$, $\mathbb{E}[Q^{i,j}_{n,m}Q^{k,l}_{n,m}]=0$ clearly.

  \diams For $i<k$ and $l<j$, we set \[\sigma^{i,j}=\sigma(  X_{(i-1)T^{-1}}, X_{iT^{-1}} , Y_{(j-1)T^{-1}} , (X_t)_{t\in [(k-1)T^{-1}, kT^{-1}] }, (Y_s)_{s\in [(k-1)T^{-1}, kT^{-1}] } ).\]
  Then, $\mathbb{E}[Q^{i,j}_{n,m}|\sigma^{i,j}]=0$ whilst $Q^{k,l}_{n,m}$ is $\sigma^{i,j}$-measurable. We deduce that $\mathbb{E}[Q^{i,j}_{n,m}Q^{k,l}_{n,m}]=0$.

  \diams For $i=k$ and $l=j$,
  let us set
  \[Q^{X,Y}_{n,m}=D^{X,Y}_{n,m}-\mathbb{E}_{x,y}[D^{X,Y}|\ell^{X,Y}(\R^2), X_1 ] , \quad
  %P^{X,Y}_{n,m}=D^{X,Y}_{n,m}-\mathbb{E}_{x,y}[D^{X,Y}|\ell^{X,Y}(\R^2), Y_1 ], \] \[
  \mbox{and} \quad
  R^{X,Y}_{n,m}=D^{X,Y}_{n,m}-\mathbb{E}_{x,y}[D^{X,Y}|\ell^{X,Y}(\R^2)].
  \]
  Remark that $Q^{X,Y}_{n,m}=R^{X,Y}_{n,m}-\mathbb{E}[R^{X,Y}_{n,m}|\ell^{X,Y}(\R^2), X_1 ] $, so that
  \[\|Q^{X,Y}_{n,m}\|_{L^2}^2\leq \|R^{X,Y}_{n,m}\|_{L^2}^2=O(n^{-\alpha}m^{-\beta}). \]
  We have
  \begin{align*}
  \mathbb{E}_{x,y}[&(Q^{i,j}_{n,m})^2 ]
  = \int_{(\R^2)^2}\hspace{-0.3cm} p_{(i-1)T^{-1}}(x,x_2)p_{(j-1)T^{-1}}(y,y_2)\mathbb{E}_{x,y}[(Q^{i,j}_{n,m})^2|X_{(i-1)T^{-1}}=x_2,Y_{(j-1)T^{-1}}=y_2  ]\\
  &= T^{-2} \int_{(\R^2)^2} p_{(i-1)T^{-1}}(x,x_2)p_{(j-1)T^{-1}}(y,y_2) \mathbb{E}_{\sqrt{T}x_2, \sqrt{T}y_2 }[(Q^{X,Y}_{n,m})^2]\\
  &\leq C T^{-3+4\epsilon} \frac{1}{(i-1)T^{-1}(j-1)T^{-1}} \sup_{x',y'} \mathbb{E}_{x',y'}[(Q^{X,Y}_{n,m})^2]
  + T^{-2} \hspace{-0.3cm}\sup_{x',y': |x'-y'|\geq T^{\epsilon }} \hspace{-0.3cm} \mathbb{E}_{x',y'}[(Q^{X,Y}_{n,m})^2]\\
  &\leq C' T^{-3+4\epsilon} \frac{1}{(i-1)T^{-1}(j-1)T^{-1}} n^{-\alpha} m^{-\beta}+ T^{-r},
  \end{align*}
  with $r>0$ arbitrary.

  \diams We finally look at the expression $\mathbb{E}_{x,y}[Q^{i,j}_{n,m}Q^{i,l}_{n,m}]$, with $j<l$. Once again, we know that \[\mathbb{E}_{x,y}[Q^{i,j}_{n,m}Q^{i,l}_{n,m}\mathbbm{1}_{|Y_{(j-1)T^{-1}}-X_{(i-1)T^{-1}} |\geq T^{-\frac{1}{2}+\epsilon} \mbox{ or } |Y_{(k-1)T^{-1}}-X_{(i-1)T^{-1}}|\geq T^{-\frac{1}{2}+\epsilon} } ]\leq T^{-r}.\]
  We have
  \begin{align*}
  \mathbb{E}_{x,y}[&Q^{i,j}_{n,m}Q^{i,l}_{n,m}\mathbbm{1}_{|Y_{(j-1)T^{-1}}-X_{(i-1)T^{-1}} |\leq T^{-\frac{1}{2}+\epsilon} } \mathbbm{1}_{|Y_{(k-1)T^{-1}}-X_{(i-1)T^{-1}}|\geq T^{-\frac{1}{2}+\epsilon} }   ]\\
  &\leq \mathbb{P}_{x,y}(|Y_{(j-1)T^{-1}}-X_{(i-1)T^{-1}} |\leq T^{-\frac{1}{2}+\epsilon} )
  \sup_{z}\mathbb{P}_{z}(|Y_{(l-j)T^{-1}}| \leq T^{-\frac{1}{2}+\epsilon} )\\
  &   \mathbb{E}_{x,y}[Q^{i,j}_{n,m}Q^{i,l}_{n,m}| |Y_{(j-1)T^{-1}}-X_{(i-1)T^{-1}} |\leq T^{-\frac{1}{2}+\epsilon}, |Y_{(k-1)T^{-1}}-X_{(i-1)T^{-1}}|\geq T^{-\frac{1}{2}+\epsilon}  )\\
  &\leq \frac{C T^{-2+4 \epsilon} }{ (i-1)T^{-1}(j-i)T^{-1} } \sup_{x',y'}   \mathbb{E}_{x,y}[(Q^{i,j}_{n,m})^2]\\
  &\leq \frac{C T^{-2+4 \epsilon} }{ (i-1)T^{-1}(j-i)T^{-1} } n^{-\alpha} m^{-\beta}.
  \end{align*}

  Summing over all the indices $i,j,k,l$, we obtain
  \[ \sum_{i,j,k,l} \mathbb{E}_{x,y}[Q^{i,j}_{n,m}Q^{k,l}_{n,m}]\leq C T^{-1+\epsilon'} n^{-\alpha}m^{-\beta}.\]
  We conclude as in Lemma \ref{le:tech1}.

\end{proof}
%
% \begin{corollary}
% \[\sup_{x,y}\mathbb{E}_{x,y}[(D^{X,Y}_{n,m})^2 ]-\mathbb{E}_{x,y}[(D^{X,Y}_{n,m})^2 ]
% \]
% \end{corollary}

\subsection{Elimination of the indices $n,m$}
Now, we take a different turn than in the previous section. Trying to follow the same path as before would be doomed, because a quantity that would depend only on the $X_{iT^{-1}}, Y_{jT^{-1}}$ cannot be a sufficiently good approximation of $\ell^{X,Y}(\R^2)$ unless $T$ is large, which we don't want.
\begin{lemma}
  \label{le:boot2}
  Let $\beta>0$ , and assume that
  \[
  \sup_{x,y} \mathbb{E}_{x,y}[ (nm \mathbb{E}_{x,y}[ D^{X,Y}_{n,m}|\ell^{X,Y}(\R^2)]-\ell^{X,Y}(\R^2))^2] =O(m^{-\beta} ).
  \]
  Then, for all $\omega<\frac{1}{2}$
  \[
  \sup_{x,y}
 \mathbb{E}_{x,y}\Big[\Big( \sum_{i,j=1}^T (nm\mathbb{E}[ D^{i,j}_{n,m}|\ell^{i,j}(\R^2), X_{(i-1)T^{-1}}, Y_{(j-1)T^{-1}} ] -\ell^{i,j}(\R^2) ) \Big)^2 \Big] =O ( T^{-\omega} m^{-\beta}+T^2 m^{-\frac{1}{2}-\frac{\beta}{2}  } ).
  \]
\end{lemma}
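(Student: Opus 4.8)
The plan is to mimic the proofs of Lemmas~\ref{le:tech1} and~\ref{le:boot1}, replacing the deterministic approximant of Section~\ref{sec:l2} by the random variable $\ell^{i,j}(\R^2)$ and feeding in the per-block control from the hypothesis rather than from Corollary~\ref{coro:werner} or Lemma~\ref{le:globalBound}. Write $V^{i,j}=nm\,\mathbb{E}[D^{i,j}_{n,m}\mid\ell^{i,j}(\R^2),X_{(i-1)T^{-1}},Y_{(j-1)T^{-1}}]-\ell^{i,j}(\R^2)$. By Brownian scaling (space by $\sqrt T$, time by $T$) and translation invariance of the winding, $V^{i,j}=T^{-1}\Phi^{i,j}$, where $\Phi^{i,j}$ is built from the rescaled increments of $X$ and $Y$ on the $i$-th and $j$-th blocks and from $c^{i,j}:=\sqrt T(Y_{(j-1)T^{-1}}-X_{(i-1)T^{-1}})$ in such a way that, \emph{conditionally on the value of $c^{i,j}$}, it involves only genuine Brownian increments; the hypothesis then reads $\sup_c\mathbb{E}[(\Phi^{i,j})^2\mid c^{i,j}=c]=O(m^{-\beta})$, whence $\|V^{i,j}\|_{L^2}\le CT^{-1}m^{-\beta/2}$, and $\phi_0(c):=\mathbb{E}[\Phi^{i,j}\mid c^{i,j}=c]=nm\,\mathbb{E}_{0,c}[D^{X,Y}_{n,m}]-\mathbb{E}_{0,c}[\ell^{X,Y}(\R^2)]$ satisfies $|\phi_0(c)|\le Cm^{-\beta/2}$ and, by Corollary~\ref{coro:werner}, also $|\phi_0(c)|\le Cm^{-1/2}$. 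Finally, as in Lemma~\ref{le:tech1}, controlling the $\alpha$-H\"older norms of $X^i$ and $Y^j$ shows that, up to an error of arbitrary polynomial order in $T^{-1}$, $V^{i,j}$ and $\phi_0(c^{i,j})$ are supported on $\{|c^{i,j}|\le T^\epsilon\}$, an event of probability $O(T^{2\epsilon}/(i+j))$ uniformly in $x,y$ since $c^{i,j}$ is Gaussian with variance $i+j-2$.

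Set $P^{i,j}=V^{i,j}-\mathbb{E}[V^{i,j}\mid X_{(i-1)T^{-1}},Y_{(j-1)T^{-1}},Y_{jT^{-1}}]$ and expand $\mathbb{E}[(\sum_{i,j}P^{i,j})^2]=\sum_{i,j,k,l}\mathbb{E}[P^{i,j}P^{k,l}]$, splitting into the four index regions of the remark after Lemma~\ref{le:tech1}. Every term with $i\neq k$ and $j\neq l$ vanishes by the Markov property: conditioning on $X$ up to time $(\min(i,k)-1)T^{-1}$, on the skeleton of $Y$ and on the block $Y^l$ (or $Y^j$) leaves the relevant $X$-block a genuine Brownian motion, so one factor is centered and the other measurable — this disposes, in particular, of both mixed regions without producing a Brownian bridge. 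On the diagonal, $\sum_{i,j}\mathbb{E}[(P^{i,j})^2]\le\sum_{i,j}\mathbb{E}[(V^{i,j})^2]$, and using the support constraint instead of the trivial bound gives $O(T^{-1+2\epsilon}m^{-\beta})$ (since $\sum_{i,j=1}^{T}(i+j)^{-1}=O(T)$). For $i=k$, $j\neq l$, and symmetrically for $j=l$, $i\neq k$, one needs the decay $|j-l|^{-1}$ (resp.\ $|i-k|^{-1}$): it is produced by disintegrating with respect to the skeleton displacement $Y_{(l-1)T^{-1}}-Y_{jT^{-1}}$ (resp.\ $X_{(k-1)T^{-1}}-X_{(i-1)T^{-1}}$), which keeps the relevant blocks genuine Brownian motions, brings out a heat kernel $p_{(l-j-1)T^{-1}}$ that, integrated over the support ball of area $\sim T^{-1+2\epsilon}$, yields $T^{2\epsilon}/(l-j-1)$, and whose residual coupling through $Y_{jT^{-1}}$ is split off by Cauchy--Schwarz exactly as in the treatment of $\mathbb{E}[D^{1,j}_{n,m}D^{1,l}_{n,m}]$ in Lemma~\ref{le:tech1}; each surviving factor is then bounded by the hypothesis. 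Summing, $\mathbb{E}[(\sum_{i,j}P^{i,j})^2]=O(T^{-\omega}m^{-\beta})$ for every $\omega<\tfrac12$.

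It remains to pass from $\sum P^{i,j}$ to $\sum V^{i,j}$, i.e.\ to bound $\sum_{i,j}\mathbb{E}[V^{i,j}\mid X_{(i-1)T^{-1}},Y_{(j-1)T^{-1}},Y_{jT^{-1}}]$. Split it as $\sum_{i,j}\bar V^{i,j}$, with $\bar V^{i,j}:=\mathbb{E}[V^{i,j}\mid X_{(i-1)T^{-1}},Y_{(j-1)T^{-1}}]=T^{-1}\phi_0(c^{i,j})$, plus the remainder $\sum_{i,j}(\mathbb{E}[V^{i,j}\mid 3\text{ points}]-\mathbb{E}[V^{i,j}\mid 2\text{ points}])$; the remainder is rewritten via the $X\!\leftrightarrow\!Y$-symmetric defect and bounded by $O(T^{-1+\epsilon}m^{-\beta})$ exactly as in the last step of Lemma~\ref{le:tech1}, using that conditional expectation is an $L^2$-contraction. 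For $\sum_{i,j}\bar V^{i,j}$, which depends only on the skeleton of $(X,Y)$, bound one factor of $\mathbb{E}[\bar V^{i,j}\bar V^{k,l}]$ by $Cm^{-1/2}T^{-1}$ and the other by $Cm^{-\beta/2}T^{-1}$, and use only the crude support estimate $\mathbb{P}(|c^{i,j}|\le T^\epsilon,\ |c^{k,l}|\le T^\epsilon)\le\mathbb{P}(|c^{i,j}|\le T^\epsilon)\le CT^{2\epsilon}/(i+j)$; summing over $i,j,k,l$ this is $O(T^{1+2\epsilon}m^{-1/2-\beta/2})=O(T^2m^{-1/2-\beta/2})$. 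Adding the three contributions gives the statement.

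The hard point is this last estimate. In the mixed configurations $i<k$, $j>l$ one has neither the martingale cancellation available for the noise part $P$ — since $\bar V$ is itself a conditional expectation — nor a disintegration keeping the blocks genuine Brownian motions; the honest remedy, namely quantifying how far the relevant conditioned block is from a true Brownian motion so as to still invoke Corollary~\ref{coro:werner}, is the tedious route warned about in the remark after Lemma~\ref{le:tech1}. The proof above sidesteps it by simply discarding the second support constraint there, which is affordable only because the hypothesis and Corollary~\ref{coro:werner} together furnish the geometric-mean gain $m^{-1/2-\beta/2}$; this crude handling accounts for both the deliberately lossy term $T^2m^{-1/2-\beta/2}$ and the conservative exponent $\omega<\tfrac12$ in the statement.
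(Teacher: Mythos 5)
The proposal takes a genuinely different, and arguably cleaner, route than the paper's own proof.

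The paper works directly with the uncentered quantity $E^{i,j}_{n,m}$ and partitions the quadruples $(i,j,k,l)$ by sign of $(k-i)$ and $(l-j)$; the two mixed sign patterns must both be treated, and the pattern $i<k$, $j>l$ forces a conditioned $Y$--block to behave as a Brownian \emph{bridge} rather than a Brownian motion. Quantifying that discrepancy is the technical core of the paper's proof: the factor $A(y_1,y_2)$, computed via the Radon--Nikodym derivative of the bridge against the motion, is what keeps the bound of Corollary~\ref{coro:werner} usable; the near-diagonal region $|i-k|,|j-l|<T^\delta$ then has to be peeled off and estimated separately by Cauchy--Schwarz, and it is from these two places that the restriction $\omega<\tfrac12$ actually originates.

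You instead re-centre $E^{i,j}$ as $P^{i,j}+\mathbb{E}[E^{i,j}\mid X_{(i-1)T^{-1}},Y_{(j-1)T^{-1}},Y_{jT^{-1}}]$. This is exactly the centering that powers Lemmas~\ref{le:tech1} and~\ref{le:boot1}, and it is available here because $E^{i,j}$ is a function of $(X^i,Y^j)$ alone, so the three-point conditioning is meaningful. With this centering, \emph{both} mixed regions $i\neq k,\ j\neq l$ vanish identically for the noise part (the bridge still appears in the conditioning, but only as an \emph{exact} martingale cancellation, never as a quantity one must estimate), leaving only the diagonal and line terms, which you bound by the now-familiar scaling/localisation/Cauchy--Schwarz combination and the hypothesis $\sup\mathbb{E}[(E^{X,Y})^2]=O(m^{-\beta})$. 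The skeleton mean $\bar V^{i,j}=T^{-1}\phi_0(c^{i,j})$ is then estimated with a single support constraint and the geometric mean of the two available sup-norm bounds ($m^{-\beta/2}$ from the hypothesis, $m^{-1/2}$ from Corollary~\ref{coro:werner}), which is precisely what the paper's quantity $I_1$ also produces; both approaches thus arrive at the same dominant term $T^2m^{-1/2-\beta/2}$, while your noise part gives $T^{-1+\epsilon}m^{-\beta}$, slightly better than the paper's $T^{-1/2}$, which is harmless since the other term dominates. What you gain is the complete elimination of the Radon--Nikodym computation and of the near-diagonal bookkeeping; what you give up is that the deliberately lossy handling of $\sum\bar V^{i,j}$ cannot be sharpened within your scheme (whereas the paper's $I_2$ in principle could), but the lemma's statement already only claims $T^2m^{-1/2-\beta/2}$, so nothing is lost.

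Two minor points. In the cancellation step you should condition on $X$ up to time $(\max(i,k)-1)T^{-1}$, not $(\min(i,k)-1)T^{-1}$: the later block must be the one made into a genuine Brownian motion, with the earlier one rendered measurable. And the line terms $i=k$, $j\neq l$ are dispatched rather quickly by pointing to the treatment of $\mathbb{E}[D^{1,j}D^{1,l}]$ in Lemma~\ref{le:tech1}; when actually writing this out one should note that the role of Lemma~\ref{le:globalBound} there is played here by the hypothesis on $\sup_{x,y}\mathbb{E}_{x,y}[(E^{X,Y})^2]$, and that the Cauchy--Schwarz split decoupling $Y_{jT^{-1}}$ goes through because $V^{i,j}$, like $D^{i,j}$, is a functional of the single block $(X^i,Y^j)$.
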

\begin{proof}
  We set \[E^{i,j}_{n,m}=nm \mathbb{E}[ D^{i,j}_{n,m}|\ell^{i,j}(\R^2), X_{(i-1)T^{-1}}, Y_{(j-1)T^{-1}} ] -\ell^{i,j}(\R^2),\]
  and
  \[ F^{i,j}_{n,m}= E^{i,j}_{n,m} \mathbbm{1}_{|X_{iT^{-1}}-X_{(i-1)T^{-1}} |\leq T^{-\frac{1}{2}+\epsilon}  }
  \mathbbm{1}_{|Y_{jT^{-1}}-Y_{(j-1)T^{-1}} |\leq T^{-\frac{1}{2}+\epsilon}  }
  \mathbbm{1}_{|X_{(i-1)T^{-1}}-Y_{(j-1)T^{-1} }|\leq T^{-\frac{1}{2}+\epsilon }}.
  \]
  In this proof, $\sup_{y'_1}$ always means $\sup_{y'_1\in B(y_1,T^{-\frac{1}{2}+\epsilon})}$.

  \diams As usual, the case $i<k-1$, $j<l-1$ is easily dealt with. Let $Z=( X_{ iT^{-1}}, Y_{(j-1) T^{-1}}) $.
  Then,
  \begin{align*}
  |\mathbb{E}_{x,y}\big[F^{k,l}_{n,m}\big|Z\big]|&\leq \mathbb{P}\big(|X_{(k-1)T^{-1}}-Y_{(l-1)T^{-1}}|\leq T^{-\frac{1}{2}+\epsilon}\big|Z\big)   \\
  &\hspace{5cm}\sup_{x',y'} |\mathbb{E}_{x,y}\big[F^{k,l}_{n,m}\big|X_{(k-1)T^{-1}}=x', Y_{(l-1)T^{-1}}=y' \big]|\\
  &\leq \frac{C T^{-1+2\epsilon}}{(k+l-i-j-2)T^{-1} } T^{-1} \sup_{x',y'}|\mathbb{E}_{x',y'}\big[F^{X,Y}_{n,m}]|\\
  &\leq \frac{C' T^{-2+2\epsilon}}{(k+l-i-j-2)T^{-1} }  m^{-\frac{1}{2}} \qquad \mbox{using Corollary \ref{coro:werner}}.
  \end{align*}
  It follows that
  \begin{align*}
  \mathbb{E}_{x,y}[ F^{i,j}_{n,m} F^{k,l}_{n,m}  ]
  &= \mathbb{E}_{x,y}\big[ F^{i,j}_{n,m} \mathbb{E}_{x,y}\big[F^{k,l}_{n,m}\big|Z\big]  \big]\\
  & \leq  \frac{C' T^{-2+2\epsilon}}{(k+l-i-j-2)T^{-1} }  m^{-\frac{1}{2}}\mathbb{E}_{x,y}[ |F^{i,j}_{n,m}| ]\\
  & \leq  \frac{C' T^{-2+2\epsilon}}{(k+l-i-j-2)T^{-1} }  m^{-\frac{1}{2}}T^{-1}\mathbb{P}(|X_{(i-1)T^{-1}}-Y_{(j-1)T^{-1} }|\leq T^{-\frac{1}{2}+\epsilon } )\\ &\hspace{8.5cm}\sup_{x',y'} \mathbb{E}_{x',y'}[ (F^{i,j}_{n,m})^2 ]^{\frac{1}{2}}\\
  &\leq  \frac{C'' T^{-4+4\epsilon}}{(k+l-i-j-2)(i-j-2)T^{-2} }  m^{-\frac{1}{2}-\frac{\beta}{2}}.
  \end{align*}
  The sum over $i,j,k,l$ is then smaller than
  \[C T^{4\epsilon} m^{-\frac{1}{2}-\frac{\beta}{2}}.\]
  Remark that we cannot get a factor $m^{-1}$ because of the absolute value around $F^{i,j}_{n,m}$, and that our bound would not have been sufficient if we would also have absolute values around $F^{k,l}_{n,m}$: keeping one of this term with no absolute value is the requirement that makes the next case difficult to deal with.

  \diams We now assume $i<k-1$, $j>l$, but for once we also assume $j-l>T^{\delta}$, for some fixed but arbitrary $\delta>0$.

  Let $Z=( X_{ iT^{-1}}, Y_{(j-1) T^{-1}}) $, $W=(Z, Y_{(l-1) T^{-1}})$ and $\mathcal{F}=\sigma(Z, (X_t)_{t<iT^{-1}}, (Y_s)_{s> (j-1)T^{-1}}) $.
  Remark that
  \begin{align*} \mathbb{E}_{x,y}[ F^{i,j}_{n,m} F^{k,l}_{n,m}  ]&= \mathbb{E}_{x,y}[  F^{i,j}_{n,m} \mathbb{E}_{x,y}[ F^{k,l}_{n,m}|\mathcal{F}  ]]=
  \mathbb{E}_{x,y}[  F^{i,j}_{n,m} \mathbb{E}_{x,y}[ F^{k,l}_{n,m}|Z  ]]\\
  &=
  \mathbb{E}_{x,y}[  \mathbb{E}_{x,y}[  F^{i,j}_{n,m}|Z] \mathbb{E}_{x,y}[ F^{k,l}_{n,m}|Z  ]].
  \end{align*}
  Disintegrating $  \mathbb{E}_{x,y}[ F^{i,j}_{n,m} F^{k,l}_{n,m}  ]$ with respect to $Z=( X_{ iT^{-1}}, Y_{(j-1) T^{-1}} )$, and disintegrating then
  $\mathbb{E}_{x,y}[ F^{k,l}_{n,m}|  Z=(x_1,y_2)]$ with respect to $Y_{(l-1)T^{-1}}$, we get
  \begin{align*}
  \mathbb{E}_{x,y}[ F^{i,j}_{n,m} F^{k,l}_{n,m}  ] &=\int_{(\R^2)^2} p_{i T^{-1}}(x,x_1)  p_{(j-1) T^{-1}}(y,y_2)
  \\
  & \hspace{3cm} \mathbb{E}_{x,y}[F^{i,j}_{n,m} |  Z=(x_1,y_2)] \mathbb{E}_{x,y}[ F^{k,l}_{n,m}|  Z=(x_1,y_2)]
  \d x_1  \d y_2,\\
  &=\int_{H} p_{i T^{-1}}(x,x_1)  p_{(l-1) T^{-1}}(y,y_1)  p_{(j-l) T^{-1}}(y_1,y_2)
  \\
  & \hspace{1cm} \mathbb{E}_{x,y}[F^{i,j}_{n,m} |  Z=(x_1,y_2)] \mathbb{E}_{x,y}[ F^{k,l}_{n,m}|  W=(x_1,y_2,y_1)]
  \d x_1  \d y_1 \d y_2,
  \end{align*}
  where $H\subseteq (\R^2)^3$ is the set of triples $(x_1,y_1,y_2)$ with $|y_2-x_1|\leq T^{-\frac{1}{2}+\epsilon}$.
  %Therefore, we wish to compare $\mathbb{E}_{x,y}[ F^{k,l}_{n,m}|  Z=(x_1,y_2)]$ with $ \mathbb{E}_{x,y}[ F^{k,l}_{n,m}]$.
  Under \[\mathbb{P}_{x,y}(\ \cdot \ | W= (x_1,y_2,y_1)),\] $(X_s)_{s\geq {iT^{-1}}}$ is a Brownian motion started from $x_1$, and $(Y_t)_{t\in [ (l-1)T^{-1}, (j-1)T^{-1}]}$ is a Brownian bridge from $y_1$ to $y_2$ of duration $j-l$. When further restricted to
  $t\in [ (l-1)T^{-1}, lT^{-1}]$,
  its distribution becomes absolutely continuous with respect to the one of the Brownian motion started from $y_1$, with Radon-Nikodym derivative given by
  \[
  \Big(\frac{\mathbb{P}_{y}(\ \cdot \ | W= (x_1,y_2,y_1))  }{ \mathbb{P}_{y_1} }\Big)_{\big|\sigma( (Y_t)_{t \in [ (l-1)T^{-1}, lT^{-1}] } }=  \frac{p_{(j-l-1)T^{-1}    }(Y_{lT^{-1}},y_2)   }{p_{(j-l)T^{-1} }(y_1,y_2) }.
  \]
  For any two measures $\mathbb{Q}\ll \mathbb{P}$ and variable $X$ (not necessarily positive),
  \[
  |\mathbb{E}_{\mathbb{Q}}[X]-\mathbb{E}_{\mathbb{P}}[X]|=\Big|\int X(\omega) \big(\frac{\d \mathbb{Q}}{\d \mathbb{P}}(\omega) -1\Big) \d \mathbb{P}\leq \mathbb{E}_{\mathbb{P}}[X^2]^{\frac{1}{2}}\mathbb{E}_{\mathbb{P}}\Big[ \Big(\frac{\d \mathbb{Q}}{\d \mathbb{P}} -1  \Big)^2 \Big]^{\frac{1}{2}}.
  \]
  In our case, the second factor reads
  \[ A(y_1,y_2)= \int_{\R^2}  \Big( \frac{p_{(j-l-1)T^{-1}    }(y ,y_2)   }{p_{(j-l)T^{-1} }(y_1,y_2) } -1\Big)^2 p_{T^{-1}}(y_1,y)\d y.\]
  The most simple way to compute this integral is to develop the square, and to us the fact that $p_t(x,y)^2=\frac{1}{2\pi t} p_{t/2}(x,y)$.
  All computations done, we get
  \[ A(y_1,y_2)= \frac{j-l-1}{j-l+1}p_{(j-l-1)T^{-1}}(y_1,y_2)-p_{(j-l)T^{-1}}(y_1,y_2).\]
  We deduce
  \begin{align*}
  \mathbb{E}_{x,y}[ F^{i,j}_{n,m} F^{k,l}_{n,m}  ]
  &\leq %T^{-1} m^{-\frac{1}{2}}
  \int_{H} p_{i T^{-1}}(x,x_1)  p_{(l-1) T^{-1}}(y,y_1)  p_{(j-l) T^{-1}}(y_1,y_2)\mathbb{E}_{x,y}[F^{i,j}_{n,m} |  Z=(x_1,y_2)]
  \\
  & \hspace{5cm} \mathbb{E}_{x,y}[ F^{k,l}_{n,m}| X_{iT^{-1}}=x_1 , Y_{(l-1)T^{-1}}=y_1]
  \d x_1  \d y_1 \d y_2\\
  &%\hspace{-2cm}
  +  \int_{H} p_{i T^{-1}}(x,x_1)  p_{(l-1) T^{-1}}(y,y_1) A(y_1,y_2)
  \\
  & \hspace{0.2cm} |\mathbb{E}_{x,y}[F^{i,j}_{n,m} |  Z=(x_1,y_2)]| \mathbb{E}_{x,y}\big[ (F^{k,l}_{n,m})^2 \big|X_{iT^{-1}}=x_1, Y_{(l-1)T^{-1}}=y_1\big]^{\frac{1}{2}} \d x_1  \d y_1 \d y_2\\
  &=I_1+I_2.
  \end{align*}
  For the first term $I_1$, we can integrate back with respect to $y_2$, so that
  \begin{align*}
  I_1=&\int_{(\R^2)^2} p_{i T^{-1}}(x,x_1)  p_{(l-1) T^{-1}}(y,y_1)  \mathbb{E}_{x,y}[F^{i,j}_{n,m} |X_{iT^{-1}}=x_1, Y_{(l-1)T^{-1}}=y_1 ]
  \\
  & \hspace{1cm} \mathbb{E}_{x,y}[ F^{k,l}_{n,m}| X_{iT^{-1}}=x_1 , Y_{(l-1)T^{-1}}=y_1]
  \d x_1  \d y_1 \\
  &= \mathbb{E}_{x,y}[ \mathbb{E}[F^{i,j}_{n,m}|  X_{iT^{-1}} , Y_{(l-1)T^{-1}}] \mathbb{E}[F^{k,l}_{n,m}|  X_{iT^{-1}} , Y_{(l-1)T^{-1}} ]]
  \end{align*}
  Now,
  \begin{align*}
  |\mathbb{E}\big[F^{k,l}_{n,m}\big|  X_{iT^{-1}} , Y_{(l-1)T^{-1}} \big]|
  &\leq \mathbb{E}\big[|\mathbb{E}\big[ F^{k,l}_{n,m}\big|  X_{(k-1)T^{-1}} , Y_{(l-1)T^{-1}} \big]| \big|  X_{iT^{-1}} , Y_{(l-1)T^{-1}} \big]\\
  &\leq CT^{-1}m^{-\frac{1}{2}}+ \mathbb{E}[|E^{k,l}_{n,m}-F^{k,l}_{n,m}|\big|  X_{iT^{-1}} , Y_{(l-1)T^{-1}} \big]|.
  \end{align*}
  Proceeding as we already did several times, we easily show that $\mathbb{E}[|E^{k,l}_{n,m}-F^{k,l}_{n,m}|^2]$ decays to $0$ more quickly than any power of $T$,
  and we deduce that
  \[
  I_1\leq (CT^{-1}m^{-\frac{1}{2}}+\mathbb{E}[|E^{k,l}_{n,m}-F^{k,l}_{n,m}|^2]^\frac{1}{2}) \mathbb{E}[(E^{i,j}_{n,m})^2 ]^{\frac{1}{2}}\leq C'T^{-2}m^{-\frac{1}{2}}m^{-\frac{\beta}{2}}.
  \]

  For the second term $I_2$, we split $H$ into $H'\sqcup H''$ with \[H'=\{(x_1,y_1,y_2)\in H: |y_1-y_2|\leq (j-l)^\frac{1}{2} T^{-\frac{1}{2}+\frac{\delta}{2}}\},\] and we decompose $I_2$ into $I_2'+I_2''$ accordingly. We first have
  \begin{align}
  \mathbb{E}_{x,y}\big[ (F^{k,l}_{n,m})^2 \big|X_{iT^{-1}}=x_1, Y_{(l-1)T^{-1}}=y_1\big]^{\frac{1}{2}}
  &\leq T^{-1} \mathbb{P}(|X_{(k-1)T^{-1}}-Y_{(j-1)T^{-1}}|\leq T^{-\frac{1}{2}+\epsilon} )^{\frac{1}{2}}\nonumber\\
&\hspace{4cm}  \sup_{x',y'} \mathbb{E}_{x',y'}\big[ (E^{X,Y}_{n,m})^2\big]^{\frac{1}{2}}\nonumber\\
  &\leq C \frac{T^{-\frac{3}{2}+\epsilon}}{(k-j)^{\frac{1}{2}}T^{-\frac{1}{2}} } m^{-\frac{\beta}{2}}.\label{eq:Fkl}
  \end{align}

  On $H'$, $A(y_1,y_2)$ is less than
  \begin{equation}
  \frac{(j-l)^2}{(j-l)^2-1}\exp( \frac{T^{\delta}}{j-l} )-1\leq C \frac{T^{\delta}}{j-l}.
  \label{eq:computA}
  \end{equation}

  Using \eqref{eq:computA} and \eqref{eq:Fkl}, we obtain
  \begin{align*}
  I'_2
  &\leq C\frac{T^{-\frac{3}{2}+\epsilon}}{(k-j)^{\frac{1}{2}}T^{-\frac{1}{2}} } m^{-\frac{\beta}{2}}\frac{T^{\delta}}{j-l} \int_{H} p_{i T^{-1}}(x,x_1)  p_{(l-1) T^{-1}}(y,y_1)  \\
  &\hspace{4cm} \mathbb{E}_{x,y}\big[ |F^{i,j}_{n,m}| \big|X_{iT^{-1}}=x_1, Y_{(l-1)T^{-1}}=y_1\big] \d x_1  \d y_1 \d y_2\\
  &\leq C'\frac{T^{-\frac{3}{2}+\epsilon}}{(k-j)^{\frac{1}{2}}T^{-\frac{1}{2}} } m^{-\frac{\beta}{2}} \frac{T^{\delta}}{j-l}  (j-l)   T^{-1+\delta}
  \int_{(\R^2)^2} p_{i T^{-1}}(x,x_1)  p_{(l-1) T^{-1}}(y,y_1)\\
  &\hspace{4cm} \mathbb{E}_{x,y}\big[ |F^{i,j}_{n,m}| \big|X_{iT^{-1}}=x_1, Y_{(l-1)T^{-1}}=y_1\big] \d x_1  \d y_1 \\
  &=C'\frac{T^{-\frac{5}{2}+\epsilon+2\delta}}{(k-j)^{\frac{1}{2}}T^{-\frac{1}{2}} } m^{-\frac{\beta}{2}}    \mathbb{E}_{x,y}\big[ |F^{i,j}_{n,m}| \big]\\
  &\leq
  C''\frac{T^{-\frac{5}{2}+\epsilon+2\delta}}{(k-j)^{\frac{1}{2}}T^{-\frac{1}{2}} } m^{-\frac{\beta}{2}}   \frac{T^{-2+2\epsilon}}{ (i+j)T^{-1}}  \sup_{x',y'}\mathbb{E}_{x',y'}\big[ (F^{X,Y}_{n,m})^2 \big]^{\frac{1}{2}}\\
  &\leq
  C^{(3)}\frac{T^{-\frac{9}{2}+3\epsilon+2\delta}}{(k-j)^{\frac{1}{2}}T^{-\frac{1}{2}} (i+j)T^{-1} } m^{-\beta}  .
  \end{align*}

  On $H''$, we simply bound $A(y_1,y_2)$ by
  \[ p_{(j-l-1)T^{-1}}(y_1,y_2)+p_{(j-l)T^{-1}}(y_1,y_2).\]

  For any $\delta>0$, the integral over $y_2\in \R^2\setminus B(y_1,(j-l)^{\frac{1}{2}}T^{-\frac{1}{2}+\delta})$ decays more quickly than any power of $T$, and we get
  \begin{align*}
  I''_2
  &\leq C \frac{T^{-2+2\epsilon}}{(k-j)^{\frac{1}{2}}T^{-\frac{1}{2}} } m^{-\frac{\beta}{2}}   \int_{H'} p_{i T^{-1}}(x,x_1)  p_{(l-1) T^{-1}}(y,y_1) \\
  &\hspace{6cm}(p_{(j-l) T^{-1}}(y_1,y_2)+ p_{(j-l-1) T^{-1}}( y_1-T^{-\frac{1}{2} +\epsilon}(y_1-y_2) ,y_2)) \\
  &\hspace{6cm}\mathbb{E}_{x,y}\big[ |F^{i,j}_{n,m}| \big|X_{iT^{-1}}=x_1, Y_{(l-1)T^{-1}}=y_1\big] \d x_1  \d y_1 \d y_2\\
  &\leq C T^{-r}.
  \end{align*}
  All together, and using again that $\mathbb{E}_{x,y}[(E^{i,j}_{n,m}-F^{i,j}_{n,m})^2]$ decays more quickly than any power of $T$, we have
  \[ \mathbb{E}_{x,y}[E^{i,j}_{n,m}E^{k,l}_{n,m}]\leq \mathbb{E}_{x,y}[F^{i,j}_{n,m}F^{k,l}_{n,m}]+O(T^{-r})\leq
  +C T^{-2} m^{-\frac{1}{2}-\frac{\beta}{2}}
  +  C \frac{T^{-\frac{9}{2}+3\epsilon+2\delta}}{(k-j)^{\frac{1}{2}}T^{-\frac{1}{2}} (i+j)T^{-1} } m^{-\beta}
  .\]
  The sum over all $i<k,j>l+T^\delta$ is less than
  \[ C'(T^{2} m^{-\frac{1}{2}-\frac{\beta}{2}}  +  T^{-\frac{1}{2}+3\epsilon+2\delta}m^{-\beta}).
  \]

  \diams Of course, we can interchange the roles of $(i,j)$ and $(l,k)$, so we can bound identically the sum over $i+T^{\delta}<k$, $j>l$, so that there only remains the about $T^{2+2\delta}$ quadruples with $|j-l|<T^{\delta}$, $|k-i|<T^{\delta}$. We now deal with them.

  First, disintegrating with respect to $U=(X_{(i-1)T^{-1}},Y_{(j-1)T^{-1}})$,
  we have
  \begin{align*}
  \mathbb{E}_{x,y}[(E^{i,j}_{n,m})^2]
  &\leq \mathbb{P}_{x,y}(|X_{(i-1)T^{-1}}-Y_{(j-1)T^{-1}}|\leq T^{-\frac{1}{2}+\epsilon } )\sup_{x',y'}\mathbb{E}_{x,y}[(E^{i,j}_{n,m})^2| U=(x',y')]+O(T^{-r})\\
  &\leq C\frac{T^{-1+2\epsilon}}{ (i-1)T^{-1}(j-1)T^{-1}} T^{-2} \sup_{x',y'}\mathbb{E}_{x',y'}[(E^{X,Y}_{n,m})^2]\\
  &\leq C' \frac{T^{-\frac{5}{2}+2\epsilon}}{ (i-1)T^{-1}(j-1)T^{-1}} m^{-\beta}.
  \end{align*}
  From Cauchy--Schwarz inequality, we deduce the same bound for
  $\mathbb{E}_{x,y}[E^{i,j}_{n,m}E^{k,l}_{n,m}]$. The sum over the quadruples $(i,j,k,l)$ with $|j-l|<T^{\delta}$, $|k-i|<T^{\delta}$ is therefore less than
  \[ C'' \log(T)^2 T^{-\frac{1}{2}+2\epsilon+2\delta} m^{-\beta}.\]

  \diams It remains to deal with the terms $\mathbb{E}_{x,y}[E^{i,j}_{n,m}E^{i,l}_{n,m}]$. We assume $l<j$.
  Set \[ U=(X_{(i-1)T^{-1}},Y_{(j-1)T^{-1}}),\mbox{ and }Z=Y_{(l-1)T^{-1}}-Y_{jT^{-1}}.\] Remark that for $E^{i,j}_{n,m}E^{i,l}_{n,m}$ to be nonzero, it requires both $|X_{(i-1)T^{-1}}-Y_{(j-1)T^{-1}}|$
  and $|X_{(i-1)T^{-1}}-Y_{(l-1)T^{-1}}|$ to be small, in which case $Z$ is small as well.
  Then, disintegrating with respect to $U$ and $Z$, we get
  \begin{align*}
  \mathbb{E}_{x,y}[E^{i,j}_{n,m}E^{i,l}_{n,m}]
  &\leq \mathbb{P}_{x,y}(|X_{(i-1)T^{-1}}-Y_{(j-1)T^{-1}}|\leq T^{-\frac{1}{2}+\epsilon } )
  \mathbb{P}_{x,y}(|Z|\leq T^{-\frac{1}{2}+\epsilon } )\\
&\hspace{4cm}  \sup_{x',y'}\mathbb{E}_{x,y}[E^{i,j}_{n,m}E^{i,l}_{n,m}| U=(x',y'), Z=z]+O(T^{-r})\\
  &\leq C\frac{T^{-2+2\epsilon }}{(i+j)T^{-1}(l-j)T^{-1}   } T^{-2} \sup_{x',y'}\mathbb{E}_{x',y'}[(E^{X,Y}_{n,m})^2]+O(T^{-r})\\
  &\leq C\frac{T^{-4+2\epsilon }}{(i+j)T^{-1}(l-j)T^{-1}   }  m^{-\beta}.
  \end{align*}
  The sum of these terms over $i,j,l$ is less than
  \[
  C' \log(T) T^{-1+2\epsilon} m^{-\beta},
  \]
  which concludes the proof.
\end{proof}

\subsection{End of the bootstrap}
From the two previous lemma \ref{le:boot1} and \ref{le:boot2}, we easily deduce the following.
\begin{corollary}
\label{coro:boot}
  Let $\beta>0$ and assume that
  \[
  \sup_{x,y} \mathbb{E}_{x,y}\Big[ \Big( nm D^{X,Y}_{n,m}-\ell^{X,Y}(\R^2)\Big)^2\Big]\leq O(m^{-\beta}).
  \]
  Then, for all $\epsilon>0$,
  \[
  \sup_{x,y} \mathbb{E}_{x,y}\Big[ \Big( nm D^{X,Y}_{n,m}-\ell^{X,Y}(\R^2)\Big)^2\Big] \leq O(T^{-\frac{1}{2}+\epsilon} m^{-\beta}+T^2m^{-\frac{1}{2}-\frac{\beta}{2}} +\log(T)^2T^6 m^{-1} ).
  \]
\end{corollary}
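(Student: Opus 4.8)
The plan is to feed the hypothesis into Lemma \ref{le:boot1} and Lemma \ref{le:boot2}, combine their conclusions through the additivity of the intersection measure, and finally transfer the resulting bound from $\sum_{i,j}D^{i,j}$ back to $D^{X,Y}_{n,m}$ via Proposition \ref{prop:encadrement}. Write $Z=\ell^{X,Y}(\R^2)$, and read the hypothesis as a bound valid for every admissible pair of parameters, as a bootstrap requires. The first point is that this single hypothesis, $\sup_{x,y}\mathbb{E}_{x,y}[(NMD^{X,Y}_{N,M}-Z)^2]=O(M^{-\beta})$, implies both hypotheses we need. Since the conditional expectation is an $L^2$-contraction and $Z$ is $\sigma(Z)$-measurable,
\[
NMD^{X,Y}_{N,M}-NM\,\mathbb{E}[D^{X,Y}_{N,M}|Z]=(NMD^{X,Y}_{N,M}-Z)-\mathbb{E}[NMD^{X,Y}_{N,M}-Z|Z],
\]
so $\|NMD^{X,Y}_{N,M}-NM\,\mathbb{E}[D^{X,Y}_{N,M}|Z]\|_{L^2}\le\|NMD^{X,Y}_{N,M}-Z\|_{L^2}$ and, taking the conditional expectation of $NMD^{X,Y}_{N,M}-Z$, also $\|NM\,\mathbb{E}[D^{X,Y}_{N,M}|Z]-Z\|_{L^2}\le\|NMD^{X,Y}_{N,M}-Z\|_{L^2}$. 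Dividing the first inequality by $NM$ and using the Pythagorean identity $\mathbb{E}[(D^{X,Y}_{N,M})^2]-\mathbb{E}[\mathbb{E}[D^{X,Y}_{N,M}|Z]^2]=\|D^{X,Y}_{N,M}-\mathbb{E}[D^{X,Y}_{N,M}|Z]\|_{L^2}^2$ gives the hypothesis of Lemma \ref{le:boot1} with exponents $(\alpha,\beta)=(2,2+\beta)$; the second inequality is the hypothesis of Lemma \ref{le:boot2} with exponent $\beta$.

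Next I would apply both lemmas at an arbitrary admissible pair $(N,M)$. Using that the intersection measure is additive over the time partition, $\ell^{X,Y}(\R^2)=\sum_{i,j}\ell^{i,j}(\R^2)$ (where $\ell^{i,j}$ denotes the intersection measure of the pieces $X^i,Y^j$), I split
\[
NM\sum_{i,j=1}^T D^{i,j}_{N,M}-Z=\mathrm{(A)}+\mathrm{(B)},
\]
with $\mathrm{(A)}=NM\sum_{i,j}\big(D^{i,j}_{N,M}-\mathbb{E}[D^{i,j}_{N,M}|\ell^{i,j}(\R^2),X_{(i-1)T^{-1}},Y_{(j-1)T^{-1}}]\big)$ and $\mathrm{(B)}=\sum_{i,j}\big(NM\,\mathbb{E}[D^{i,j}_{N,M}|\ell^{i,j}(\R^2),X_{(i-1)T^{-1}},Y_{(j-1)T^{-1}}]-\ell^{i,j}(\R^2)\big)$. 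Lemma \ref{le:boot1} with $(\alpha,\beta)=(2,2+\beta)$, multiplied through by $(NM)^2$, gives $\sup_{x,y}\mathbb{E}_{x,y}[\mathrm{(A)}^2]=O(T^{-1+\epsilon}M^{-\beta})$; Lemma \ref{le:boot2} with exponent $\beta$ and $\omega=\tfrac12-\epsilon$ gives $\sup_{x,y}\mathbb{E}_{x,y}[\mathrm{(B)}^2]=O(T^{-1/2+\epsilon}M^{-\beta}+T^2M^{-1/2-\beta/2})$. Minkowski's inequality then yields the auxiliary estimate
\[
\sup_{x,y}\mathbb{E}_{x,y}\Big[\Big(NM\sum_{i,j=1}^T D^{i,j}_{N,M}-\ell^{X,Y}(\R^2)\Big)^2\Big]=O\big(T^{-1/2+\epsilon}M^{-\beta}+T^2M^{-1/2-\beta/2}\big)
\]
for every admissible $(N,M)$.

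The final step is to transfer this to $D^{X,Y}_{n,m}$. Put $n^\pm=n\pm T(\sqrt n+1)$ and $m^\pm=m\pm T(\sqrt m+1)$; in the relevant range $T$ is a small power of $m$, so $(n^\pm,m^\pm)$ is admissible with $n^\pm/n\to1$ and $m^\pm/m\to1$. Applying the auxiliary estimate at $(n^\pm,m^\pm)$ and recalling from Proposition \ref{prop:encadrement} that $\sum_{i,j}D^{i,j}_{n^+,m^+}-R\le D^{X,Y}_{n,m}\le\sum_{i,j}D^{i,j}_{n^-,m^-}+R$ with $\|nmR\|_{L^2}\le C\log(T)^cT^2m^{-1/2}$, one gets $|nmD^{X,Y}_{n,m}-Z|\le\max_\pm|nm\sum_{i,j}D^{i,j}_{n^\pm,m^\pm}-Z|+nmR$. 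Decomposing
\[
nm\sum_{i,j}D^{i,j}_{n^\pm,m^\pm}-Z=\frac{nm}{n^\pm m^\pm}\Big(n^\pm m^\pm\sum_{i,j}D^{i,j}_{n^\pm,m^\pm}-Z\Big)+\Big(\frac{nm}{n^\pm m^\pm}-1\Big)Z,
\]
and using $0\le nm/(n^\pm m^\pm)\le1$, $|nm/(n^\pm m^\pm)-1|=O(T/\sqrt m)$ and $\sup_{x,y}\|Z\|_{L^2}<\infty$, I would collect the three contributions to $\|nmD^{X,Y}_{n,m}-Z\|_{L^2}^2$: the auxiliary bound $O(T^{-1/2+\epsilon}m^{-\beta}+T^2m^{-1/2-\beta/2})$, the rescaling correction $O(T^2m^{-1})$, and the comparison error $O(\log(T)^{2c}T^4m^{-1})$. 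As $T\to\infty$ both $T^2m^{-1}$ and $\log(T)^{2c}T^4m^{-1}$ are $O(\log(T)^2T^6m^{-1})$, which yields exactly the claimed bound. I expect no genuine obstacle in this corollary, the weight of the section lying in the two preceding lemmas; the only points needing care are the additivity identity $\ell^{X,Y}(\R^2)=\sum_{i,j}\ell^{i,j}(\R^2)$, which makes $\mathrm{(A)}+\mathrm{(B)}$ telescope to $NM\sum_{i,j}D^{i,j}_{N,M}-\ell^{X,Y}(\R^2)$, and the bookkeeping verifying that the parameter shifts $n\mapsto n^\pm$, $m\mapsto m^\pm$ and the rescaling factors $nm/(n^\pm m^\pm)$ contribute only errors comfortably swallowed by the slack term $\log(T)^2T^6m^{-1}$.
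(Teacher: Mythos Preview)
Your proof is correct and follows essentially the same route as the paper: verify the hypotheses of Lemmas \ref{le:boot1} and \ref{le:boot2} from the single assumption via the $L^2$-contraction/Pythagorean identity, combine the two lemmas through $\sum_{i,j}\ell^{i,j}(\R^2)=\ell^{X,Y}(\R^2)$, and transfer back to $D^{X,Y}_{n,m}$ with Proposition \ref{prop:encadrement}. One harmless slip: the ratio $nm/(n^-m^-)$ exceeds $1$, not the other way round, but you only need that it is bounded and $|nm/(n^\pm m^\pm)-1|=O(T/\sqrt m)$, which you state correctly.
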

\begin{proof}
  Since
  \[
  \mathbb{E}_{x,y}\Big[ \mathbb{E}_{x,y}\big[ nm D^{X,Y}_{n,m} -\ell^{X,Y}(\R^2)| \ell^{X,Y}(\R^2) \big]^2\Big]\leq
  \mathbb{E}_{x,y}\Big[ \Big( nm D^{X,Y}_{n,m}-\ell^{X,Y}(\R^2)\Big)^2\Big]\leq O(m^{-\beta}),\]
  the assumption of Lemma \ref{le:boot2} is satisfied.

  Since $\mathbb{E}[(X-Y)^2]\geq\mathbb{E}[(X-\mathbb{E}[X|\sigma])^2]=\mathbb{E}[X^2]-\mathbb{E}[\mathbb{E}[X|\sigma])^2] $ for any $\sigma$-measurable variable $Y$, the assumption of Lemma \ref{le:boot2} is also satisfied\footnote{with $\alpha=1$ and a shift of $1$ in the exponent $\beta$
  between Lemma \ref{le:boot2} and here}.

  Thus,
  \begin{align*}
  \sup_{x,y} \mathbb{E}_{x,y}\Big[ \Big( \sum_{i,j=1}^T (nm D^{i,j}_{n,m}-\ell^{i,j}(\R^2))\Big)^2\Big]
  &\leq 2 \sup_{x,y} \mathbb{E}_{x,y}\Big[ \Big( \sum_{i,j=1}^T nm( D^{i,j}_{n,m}-\mathbb{E}[D^{i,j}_{n,m} |\ell^{i,j}(\R^2)] )\Big)^2\Big]\\
  &+2\sup_{x,y} \mathbb{E}_{x,y}\Big[ \Big( \sum_{i,j=1}^T( nm\mathbb{E}[D^{i,j}_{n,m} |\ell^{i,j}(\R^2)]-\ell^{i,j}(\R^2) )\Big)^2\Big]\\
  &=O(T^{-\frac{1}{2}+\epsilon} m^{-\beta}+T^2m^{-\frac{1}{2}-\frac{\beta}{2}} ).
  \end{align*}
  Using now the fact that $\sum \ell^{i,j}(\R^2)=\ell^{X,Y}$ and Proposition \ref{prop:encadrement}, setting $n^\pm=n\pm T(\sqrt{n}+1)$ and $m^\pm=m\pm T(\sqrt{m}+1)$, and denoting by $x_{\pm}$ the positive (resp. negative) part of $x$, we obtain
  we obtain
  \begin{align*}
  \sup_{x,y} \mathbb{E}_{x,y}\Big[ \Big( nm D^{X,Y}_{n,m}&-\ell^{X,Y}(\R^2)\Big)_+^2\Big]\leq
  \sup_{x,y} \mathbb{E}_{x,y}\Big[ \Big( \sum_{i,j=1}^T (nm D^{i,j}_{n^-,m^-}\hspace{-0.5cm}-\ell^{i,j}(\R^2))\Big)^2\Big]+O(\log(T)^2T^6 m^{-1} )\\
  &\leq
  \sup_{x,y} \mathbb{E}_{x,y}\Big[ \Big( \sum_{i,j=1}^T (n^-m^- D^{i,j}_{n^-,m^-}-\ell^{i,j}(\R^2))\Big)^2\Big]+O(\log(T)^2T^6 m^{-1} )\\
  &\leq O(T^{-\frac{1}{2}+\epsilon} m^{-\beta}+T^2m^{-\frac{1}{2}-\frac{\beta}{2}}+\log(T)^2T^6 m^{-1} ).
  \end{align*}
  The negative part is treated identically, and this concludes the proof.
\end{proof}
Finally, we conclude that
\begin{proposition}
  For all $\epsilon>0$,
  \[
  \sup_{x,y} \mathbb{E}_{x,y}\Big[ \Big( nm D^{X,Y}_{n,m}-\ell^{X,Y}(\R^2)\Big)^2\Big]\leq O(m^{-1+\epsilon}).
  \]
\end{proposition}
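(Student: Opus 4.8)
The plan is to iterate Corollary \ref{coro:boot} finitely many times. Write $(\star_\beta)$ for the assertion $\sup_{x,y}\mathbb{E}_{x,y}\big[(nmD^{X,Y}_{n,m}-\ell^{X,Y}(\R^2))^2\big]=O(m^{-\beta})$; the proposition is precisely ``$(\star_\beta)$ holds for every $\beta<1$''. The base case, $(\star_{\beta_0})$ for every $\beta_0\in(0,1/5)$, is the $L^2$ estimate concluding Section \ref{sec:l2} (where the choice $T=\lfloor m^{1/5}\rfloor$ produces an $m^{-1/5+\epsilon}$ bound). So it suffices to prove the following induction step: if $(\star_\beta)$ holds for some $\beta\in(0,1)$ then, for every $\eta>0$, $(\star_{\beta'})$ holds with $\beta'=\frac{1+12\beta}{13}-C\eta$ for a universal constant $C$. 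Since the affine map $\beta\mapsto\frac{1+12\beta}{13}$ is a contraction with attractive fixed point $1$ (equivalently $1-\beta\mapsto\frac{12}{13}(1-\beta)$), iterating it with a single fixed small $\eta$ pushes $\beta$ above any prescribed $\beta^*<1$ in a finite number of steps, after which one lets $\beta^*\uparrow1$.

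For the induction step, assume $(\star_\beta)$ and apply Corollary \ref{coro:boot} with $T=T_n:=\lfloor m^{a}\rfloor$, where $a:=\frac{2(1-\beta)}{13}$. This $T_n$ is an integer-valued non-decreasing function of $n$ with $2<T_n<\sqrt m$ for $n$ large (because $a<1/2$), so Proposition \ref{prop:encadrement} and Corollary \ref{coro:boot} apply throughout the admissible regime $n^{c_1}<m\le n^{c_2}$. Since $T_n\asymp m^{a}$ and $\log(T_n)^2=O(m^\eta)$, the three error terms of Corollary \ref{coro:boot} decay like $m^{-\delta_i}$ with $\delta_1=\frac a2+\beta-O(\eta)$, $\delta_2=\frac12+\frac\beta2-2a$, and $\delta_3=1-6a-\eta$. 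The value $a=\frac{2(1-\beta)}{13}$ is exactly the one balancing $\delta_1$ and $\delta_3$, giving $\delta_1=\delta_3=\frac{1+12\beta}{13}-O(\eta)$, while $\delta_2=\frac{5+21\beta}{26}$, which satisfies $\delta_2\ge\frac{2+24\beta}{26}=\frac{1+12\beta}{13}$ precisely because $\beta\le1$. Hence the slowest term decays like $m^{-(\frac{1+12\beta}{13}-C\eta)}$, which is $(\star_{\beta'})$.

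For the conclusion, fix $\beta^*<1$ and set $u_k=1-\beta_k$, starting from some $\beta_0\in(0,1/5)$, so $u_0<1$ and $u_{k+1}\le\frac{12}{13}u_k+C\eta$, whence $u_k\le(\tfrac{12}{13})^k u_0+13C\eta$. Choosing $\eta$ with $13C\eta<\tfrac12(1-\beta^*)$ and then $k$ large enough that $(\tfrac{12}{13})^k u_0<\tfrac12(1-\beta^*)$ gives $\beta_k>\beta^*$, i.e.\ $(\star_{\beta^*})$; since $\beta^*<1$ was arbitrary, the proposition follows. The implicit constant in each $O(\cdot)$ depends on the number of iterations, which is harmless as this number is finite.

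The step requiring the most care is the optimisation of $T$ and the check that, with the choice $a=\frac{2(1-\beta)}{13}$, all three error terms of Corollary \ref{coro:boot} carry the same decay exponent $\frac{1+12\beta}{13}$ up to $O(\eta)$ — in particular that the middle term $T^2m^{-1/2-\beta/2}$, the one not controlled by the bootstrap gain, is never the bottleneck, which comes down to the elementary inequality $\frac{5+21\beta}{26}\ge\frac{2+24\beta}{26}$ valid for $\beta\le1$. The remaining points (applicability of the corollary in the regime, integer rounding of $m^a$, absorption of the $\log(T)^2$ factor, and convergence of the recursion) are routine.
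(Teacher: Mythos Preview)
Your proof is correct and follows the same strategy as the paper: iterate Corollary~\ref{coro:boot} with a suitable choice of $T$ as a power of $m$ until the exponent $\beta$ exceeds any prescribed $\beta^*<1$. The paper phrases this as a contradiction argument on the supremum $\beta_0$ of admissible exponents (picking $\omega$ in a small interval so that all three error terms beat $m^{-\beta_0}$), whereas you carry out the iteration explicitly, optimising $a=\tfrac{2(1-\beta)}{13}$ to balance the first and third terms and exhibiting the contraction $1-\beta\mapsto\tfrac{12}{13}(1-\beta)$; the two arguments are equivalent.
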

\begin{proof}
  Let $\beta_0$ be the supremum of the values $\beta$ such that
    \[
    \sup_{x,y} \mathbb{E}_{x,y}\Big[ \Big( nm D^{X,Y}_{n,m}-\ell^{X,Y}(\R^2)\Big)^2\Big]\leq O(m^{-\beta}).
    \]
  If $\beta_0<1$, let $\beta\in \big(\beta_0-\tfrac{1-\beta_0}{12}, \beta_0)$ and let $T$ be equivalent to $n^\omega$ for $\omega$ in the non-empty interval $\big(2\beta_0-2\beta, \frac{1-\beta_0}{12}\big)$. Then, Corollary \ref{coro:boot} gives a bound better than $\beta_0$, which is absurd.
  Therefore, $\beta_0\geq 1$.
\end{proof}

\section{From $L^2$ to $L^p$ and a.s.}
\label{sec:lp}
Our goal in this section is to extend the previous estimations from $L^2$ to $L^p$. Our strategy is not to start over the whole proof, but instead to control the large deviations of $D^{X,Y}_{n,m}$ around $\ell^{X,Y}$. The main estimation we need is the following.
\begin{lemma}
  \label{le:concentration}
  For any $r>0$ and $\epsilon, \epsilon'>0$, there is a constant $C$ such that for all $n,m$ with $n^{\epsilon'}\leq m\leq n$,
  \[
  \sup_{x,y}\mathbb{P}_{x,y}\big(\big|nmD_{n,m}^{X,Y}-\frac{\ell^{X,Y}}{4\pi^2}\big|\geq m^{-\frac{1}{2}+\epsilon} \big)\leq Cm^{-r}.
  \]
\end{lemma}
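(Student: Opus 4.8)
The plan is to obtain the claim from Markov's inequality applied to a high even moment of $nmD^{X,Y}_{n,m}-\tfrac{\ell^{X,Y}(\R^2)}{4\pi^2}$. Concretely, I will prove that for every integer $k\geq1$ and every $\epsilon>0$ there is a constant $C_{k,\epsilon}$, uniform in the starting points, with
\[
\sup_{x,y}\mathbb{E}_{x,y}\Big[\Big(nmD^{X,Y}_{n,m}-\tfrac{\ell^{X,Y}(\R^2)}{4\pi^2}\Big)^{2k}\Big]\leq C_{k,\epsilon}\,m^{-k+k\epsilon}.
\]
Granting this, for $0<\epsilon<\epsilon'$ Markov's inequality gives $\sup_{x,y}\mathbb{P}_{x,y}(|nmD_{n,m}-\ell^{X,Y}/4\pi^2|\geq m^{-1/2+\epsilon'})\leq C_{k,\epsilon}\,m^{-k(2\epsilon'-\epsilon)}$; taking $\epsilon$ small compared with $\epsilon'$ and then $k>r/(2\epsilon'-\epsilon)$ yields the statement (after relabelling $\epsilon'$ as $\epsilon$). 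So everything reduces to the even-moment bound, which I obtain by rerunning the \emph{bookkeeping} of Section \ref{sec:boot} at the level of the $2k$-th moment, without revisiting Sections \ref{sec:error}--\ref{sec:l2}.

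First I would reduce to a sum over pieces. Fix $T=\lfloor m^{\delta}\rfloor$ with $\delta>0$ small (chosen at the end), set $n^{\pm}=n\pm T(\sqrt n+1)$, $m^{\pm}=m\pm T(\sqrt m+1)$, and apply Proposition \ref{prop:encadrement}. Its error term $R$ has $L^{r}$ norm $\leq C\log(T)^{c}T^{3-2/r}n^{-1}m^{-3/2}$ for every $r$, so $nmR$ has $L^{r}$ norm $\leq C'm^{3\delta-1/2}$; taking $\delta<\epsilon/6$ and $r$ large, the $L^{2k}$ norm of $nmR$ is $o(m^{-1/2+\epsilon})$ for every $k$, while the mismatch between $nm$ and $n^{\pm}m^{\pm}$ is $O(m^{-1/2+\delta})$ times a factor with $L^{2k}$ norm $O(\log(m)^{c})$ by Lemma \ref{le:globalBound}. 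Since $\sum_{i,j}\ell^{i,j}(\R^2)=\ell^{X,Y}(\R^2)$, it suffices to bound, uniformly in $x,y$, the $2k$-th moment of
\[
\Xi:=\sum_{i,j=1}^{T}\Big(nmD^{i,j}_{n,m}-\tfrac{1}{4\pi^2}\ell^{i,j}(\R^2)\Big)
\]
(for $n,m$, which are interchangeable with $n^{\pm},m^{\pm}$ up to the corrections just controlled) by $C_{k}\,m^{-k+k\epsilon}$.

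Next I would split $\Xi=\Xi_{1}+\Xi_{2}$ exactly as in the bootstrap, with $\Xi_{1}=nm\sum_{i,j}\big(D^{i,j}_{n,m}-\mathbb{E}[D^{i,j}_{n,m}\mid \ell^{i,j}(\R^2),X_{(i-1)T^{-1}},X_{iT^{-1}},Y_{(j-1)T^{-1}}]\big)$ and $\Xi_{2}=\Xi-\Xi_{1}$, and prove $2k$-th moment versions of Lemma \ref{le:boot1} and Lemma \ref{le:boot2}. For $\Xi_{1}$ one expands $\mathbb{E}[\Xi_{1}^{2k}]$ over $2k$-tuples of index pairs: by the same orthogonality used in Lemma \ref{le:boot1}, after ordering, the largest $T$-index of a tuple must be matched with another for the tuple to contribute, so inductively only "paired" tuples survive; Cauchy--Schwarz on each pair, together with the second-moment estimate used in Lemma \ref{le:boot1} (fed by the $L^2$ result $\sup_{x,y}\mathbb{E}_{x,y}[(nmD^{X,Y}_{n,m}-\ell^{X,Y}/4\pi^2)^2]=O(m^{-1+\epsilon})$ of Section \ref{sec:boot}), gives $\mathbb{E}[\Xi_{1}^{2k}]=O(T^{-k+k\epsilon}m^{-k+k\epsilon})$ with a combinatorial constant depending only on $k$. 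For $\Xi_{2}$ one runs the case analysis of Lemma \ref{le:boot2} (split by the relative order of the time-indices, disintegrate against the relevant skeleton values, use Corollary \ref{coro:werner} to see each factor $nm\mathbb{E}[D^{i,j}_{n,m}\mid\cdots]-\ell^{i,j}/4\pi^2$ is $O(m^{-1/2+\epsilon})$ in $L^2$, sum the index gaps), obtaining $\mathbb{E}[\Xi_{2}^{2k}]=O\big(T^{-k\omega}m^{-k+k\epsilon}+T^{2k}m^{-k+k\epsilon}\big)$ for any $\omega<1/2$. For $\delta$ small enough both $\Xi_1$ and $\Xi_2$ contribute $O(m^{-k+k\epsilon})$, whence $\mathbb{E}[\Xi^{2k}]=O(m^{-k+k\epsilon})$, which with the reduction above is the desired even-moment bound.

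The main obstacle is the one that already makes Lemma \ref{le:boot2} delicate, now amplified by the many indices: in the terms of $\Xi_{2}$ where two time-indices are separated by a long gap, disintegrating against an intermediate skeleton value turns a Brownian motion into a Brownian \emph{bridge}, for which the clean mean estimates of Corollary \ref{coro:werner} are unavailable, so one must compare the bridge with the motion via the Radon--Nikodym density (the computation of $A(y_1,y_2)$ in the proof of Lemma \ref{le:boot2}) and pay an extra $T^{\delta}$ for each such constraint; a $2k$-fold expansion has $O(k)$ such constraints per tuple, costing only a further $T^{O(k\delta)}$, harmless once $\delta$ is small, but it must be tracked uniformly in $x,y$. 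A secondary, routine nuisance is checking that all the auxiliary small terms — the error $R$ of Proposition \ref{prop:encadrement}, the $n\leftrightarrow n^{\pm}$ mismatch, and the cut-offs $\mathbbm{1}_{|X_{iT^{-1}}-X_{(i-1)T^{-1}}|\le T^{-1/2+\epsilon}}$ and $\mathbbm{1}_{|X_{(i-1)T^{-1}}-Y_{(j-1)T^{-1}}|\le T^{-1/2+\epsilon}}$ used throughout Section \ref{sec:boot} — also decay faster than any power of $m$ in $L^{2k}$, which follows from Gaussian tail bounds for the oscillation of Brownian motion and from Lemma \ref{le:globalBound}.
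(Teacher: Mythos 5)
Your reduction from the tail bound to a $2k$-th moment bound via Markov's inequality is fine, and your choices $T\sim m^\delta$, $n^\pm,m^\pm$ and use of Proposition \ref{prop:encadrement} are also sound. But the paper does not prove this lemma via a moment bound at all, and the reason is precisely the step that you pass over quickly: the claimed bound $\mathbb{E}[\Xi_1^{2k}]=O(T^{-k+k\epsilon}m^{-k+k\epsilon})$ cannot be obtained by the ``pairing + Cauchy--Schwarz + second moment'' argument you describe, because the heavy-tailed nature of the individual pieces makes the \emph{max} (Rosenthal-type) contribution dominate. Concretely, $nm Q^{i,j}_{n,m}$ is supported on the event $|X_{(i-1)T^{-1}}-Y_{(j-1)T^{-1}}|\lesssim T^{-1/2+\epsilon}$, which for typical $(i,j)$ has probability $\sim T^{-1}$, and conditionally there it is of order $T^{-1}$ (no $m$-smallness: Lemma \ref{le:globalBound} gives $\|nm D^{X,Y}_{n,m}\|_{2k}\lesssim \log^c$, not $o(1)$). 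Hence $\|nm Q^{i,j}_{n,m}\|_{2k}\asymp T^{-1-1/(2k)}$ and $\sum_{i,j}\mathbb{E}[(nm Q^{i,j}_{n,m})^{2k}]\asymp T^{1-2k}=m^{\delta(1-2k)}$; for this to be $\lesssim m^{-k(1-\epsilon)}$ you would need $\delta\geq k(1-\epsilon)/(2k-1)\to(1-\epsilon)/2$, which collides with the constraint $\delta\lesssim\epsilon/6$ that you need to make $nmR$ negligible. The ``pairing'' step also does not do what you claim: on the two-dimensional index set, Cauchy--Schwarz applied to a product of $2k$ factors yields $L^{2k}$ (or $L^{2^j}$) norms of the individual $Q^{i,j}$, not products of $L^2$ norms, so the bound of Lemma \ref{le:boot1} is not directly usable factor by factor.

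The paper's proof resolves exactly this obstruction, and in a way that is genuinely different from a moment computation: it defines the one-sided tail probabilities $p^+_{n,\epsilon,C}$ and derives the self-referencing inequality (roughly) $p^+_\epsilon\le T^{-q}+T^2\,p^+_{\epsilon+\omega/2}+T^4\,(p^+_{\epsilon-\omega\epsilon''})^2$. The event ``some single piece is unusually large'' --- precisely the event responsible for the Rosenthal term you cannot control --- is not estimated by a moment; it is recycled into the same quantity $p^+$ at a slightly larger $\epsilon$, and the base cases show $p^+$ decays superpolynomially there. The quadratic term, obtained by exhibiting two moderately large pieces with distinct $i$'s and distinct $j$'s (via Sublemma \ref{sub:bridges} to control the Brownian-bridge disintegration), provides the gain. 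If you want to keep a moment-based route, you would have to couple it with a recursion that first proves the high-moment bound with a suboptimal exponent and then iterates (using the improved $L^{2k}$ bound on $Q^{X,Y}_{n,m}$ as new input for the pieces $Q^{i,j}_{n,m}$), much as the paper iterates its $L^2$ bound in Section \ref{sec:boot}; a single pass at fixed $T$ does not close.
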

  \begin{proof}[Sketch of the proof]
  Let $p_\epsilon$ be the left hand side.
  We first remark that
  \begin{itemize}
  \item For $\epsilon>\frac{1}{2}$,
  $p_\epsilon$ decays more quickly than any power of $m$.
  \item For all $\epsilon>0$, there exists $r_\epsilon>0$ such that $p_\epsilon$ decays more quickly than $m^{-r_\epsilon}$.
  \end{itemize}
  We are going to recursively improve this $r_\epsilon$, by showing an inequality which is roughly
  \[ p_\epsilon \leq T^2 p_{\epsilon+ \omega \epsilon' }+ T^4 p_{\epsilon- \omega\epsilon''}^2,\]
  for some $\epsilon', \epsilon'',\omega>0$ arbitrary and with $T\sim m^\omega$.

  Showing that such an inequality is sufficient to conclude is a simple exercise that we will carry at the end of the proof.

  Assume for simplicity that $T=m^\omega$.

  We assume that the property holds for $\tilde{\epsilon}=\epsilon+\omega \epsilon'$.
  Assume that we can freely replace $D^{X,Y}_{n,m}$ with $\sum_{i,j} D^{i,j}_{n,m}$, which should not be much of a trouble considering the previous proofs in the paper.

  % Look then at the $T^{1-\epsilon'}$ couples that contributes the most to $\big|nmD_{n,m}^{X,Y}-\frac{\ell^{X,Y}}{4\pi^2}\big|$.
  %
  % We discuss depending on whether their total contribution is more or less than the half of $ m^{-\frac{1}{2}+\epsilon}$.
  % In the first case, one of them is contributing at least as $m^{-\frac{1}{2}+\epsilon} T^{-1+\epsilon'}$ (we forget about the factor $1/2$ for simpicity), so that
  % \[T \big|nmD_{n,m}^{i,j}-\frac{\ell^{i,j}}{4\pi^2}\big|\geq m^{-\frac{1}{2}+\epsilon+\omega \epsilon'} .\]
  % For a given couple $(i,j)$, the scaling relation implies that this can occur with probability at most $p_{\epsilon+\omega \epsilon'}$. The probability that this happens for some couple $(i,j)$ it then at most $T^2 p_{\epsilon+\omega \epsilon'}$, which decays more quickly than any polynomial from our assumption.

  %We now look at the second case. We still know that the $T^{1-\epsilon'}$ couples that contributes the most contributes at least as $T^{-1-\epsilon'}  m^{-\frac{1}{2}+\epsilon}$...NON c'est pas ca... il faut discuter en fonction du l=plus large juste....

  We discuss depending on whether the most important contribution is more or less than  $ T^{-1+\epsilon'} m^{-\frac{1}{2}+\epsilon}$.
  In the first case, for this couple $(i,j)$,
  \[T \big|nmD_{n,m}^{i,j}-\frac{\ell^{i,j}}{4\pi^2}\big|\geq m^{-\frac{1}{2}+\epsilon+\omega \epsilon'} .\]
  For a given couple $(i,j)$, the scaling relation implies that this can occur with probability at most $p_{\epsilon+\omega \epsilon'}$. The probability that this happens for some couple $i,j$ it then at most $T^2 p_{\epsilon+\omega \epsilon'}$, which decays more quickly than any polynomial from our assumption.

  The second case is more subtle. We first remark that the number of couples $(i,j)$ for which $E^{i,j}=\big|nmD_{n,m}^{i,j}-\frac{\ell^{i,j}}{4\pi^2}\big|\neq 0$ is smaller than $T^{1+\epsilon''}$, but on an event with probability that decays more quickly than any power of $T$.
  % {\color{NavyBlue} Idée de la preuve:
  % Pour un $j$ donné, regarder l'ensemble des $i$ tel que $|X_i-Y_j|<T^{-\frac{1}{2}+\epsilon''}$. Le moment d'ordre $k$ de cet ensemble se décompose en une somme. On ordone pour avoir des indices monotones, on calcu la proba, on somme sur $j$ et on a gagné.
  % }

  Since the highest contribution is $ T^{-1+\epsilon'} m^{-\frac{1}{2}+\epsilon}$, the $ T^{1-\epsilon'}/2$ highest contributions sum to less than $ m^{-\frac{1}{2}+\epsilon}$. The other contributions sum to more than $ m^{-\frac{1}{2}+\epsilon}$. Since this is a sum over less than $T^{1+\epsilon''}$ couples, the maximum of them must be larger than $T^{-1-\epsilon''} m^{-\frac{1}{2}+\epsilon}$,
  and each of the $ T^{1-\epsilon'}/2$ highest contributions is therefore also larger than $T^{-1-\epsilon''} m^{-\frac{1}{2}+\epsilon}$. Among these
  $ T^{1-\epsilon'}/2$ couples $(i,j)$, it is extremely unlikely that they all share the same value of $i$ or the same value of $j$ (this is by the same line of reasoning that allowed to control the number of couples for which $E^{i,j}$ is non-zero). In particular, appart from an extremely unlikely event, we can deduce than there exist two couples $(i,j),(k,l)$ with $i\neq k$ and $j\neq l$ and such that $E^{i,j}$ and $E^{k,l}$ are both greater than $T^{-1-\epsilon''} m^{-\frac{1}{2}+\epsilon} $.
  From scaling, for given couples $(i,j),(k,l)$, each of these two events has a probability higher than $p_{\epsilon- \omega \epsilon''}$, and because we were able to assume $i\neq k$ and $j\neq l$, these two events are poorly correlated, so that the probability that these two events occurs at the same time is about   $p_{\epsilon- \omega \epsilon''}^2$ (time a small power of $T$). The probability that such a quadruple $(i,j,k,l)$ exists is therefore about $T^{4} p_{\epsilon- \omega \epsilon''}^2$ at the most, hence the announced inequality.

  %
  % Since the total contribution of these less than $T^{1+\epsilon''}$ couples is higher than $m^{-\frac{1}{2}+\epsilon}$, and considering that none of them is contributing for more than $ T^{-1+\epsilon'} m^{-\frac{1}{2}+\epsilon}$,
  %
  %
  %
  % In the second case, we look at how many indices contributes at least as $T^{-1-\epsilon''}m^{-\frac{1}{2}+\epsilon}$. Our goal is to show that there is quiet a few of them. Actually, if the contribution was splitted in a balanced way, each one would only contribute as $T^{-2}m^{-\frac{1}{2}+\epsilon}$, but there is in fact a lot of couple that doesnot contribute at all.
  %
  % (Pour la conclusion: on regarde la fonction exposant limit. Elle doit etre constant, mais elle doit aussi etre strictement monotone (car >0: initialisation...)
  %
  % , and it is at least  $ T^{-1+\epsilon'}$ of them
  %
  %
  % since we know that the $T^{1-\epsilon''}$ couples that contributes the most contributes at least as $T^{-1-\epsilon''}  m^{-\frac{1}{2}+\epsilon}$ in total, but none contributes more than
  %  $ T^{-1+\epsilon'} m^{-\frac{1}{2}+\epsilon}$, so that their total contribution is less than
  % $ T^{\epsilon'-\epsilon''} m^{-\frac{1}{2}+\epsilon}$
  %

\end{proof}
The actual proof is quite long, and we split it into several pieces.

\begin{sublemma}
  For a given $j\in \{1,\dots, T\}$ and $\epsilon>0$, let \[T_j=\{i\in \{1,\dots, T\}: |X_{(i-1)T^{-1}}-Y_{(j-1)T^{-1}}|\leq T^{-\frac{1}{2}+\epsilon} \}.\]
  Then, for all $r$, for all $\epsilon>0$, there exists $C$ such that for all $T$ and all $j\in \{1,\dots, T\}$,
  \[ \sup_{x,y} \mathbb{P}_{x,y}( \# T_j \geq T^{3\epsilon}  )\leq C T^{-r}.\]

  In particular,
  \[ \sup_{x,y} \mathbb{P}_{x,y}( \# \{(i,j): |X_{(i-1)T^{-1}}-Y_{(j-1)T^{-1}}|\leq T^{-\frac{1}{2}+\epsilon}  \} \geq T^{1+3\epsilon}  )\leq C T^{-r}.\]
\end{sublemma}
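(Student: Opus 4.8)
The plan is to reduce, using the independence of $X$ and $Y$ together with translation invariance, to a statement about a single Brownian motion. Conditionally on $Y_{(j-1)T^{-1}}=w$, the quantity $\#T_j$ has the law of $N_w:=\#\{i\le T:|X_{(i-1)T^{-1}}-w|\le\rho\}$ with $\rho:=T^{-\frac12+\epsilon}$, and after shifting both $X$ and the target ball by $-w$ this is $\#\{i\le T:|X'_{(i-1)T^{-1}}|\le\rho\}$ for a Brownian motion $X'$ started at $x-w$. Since every bound below will be uniform in the starting point, this conditioning costs nothing, and it suffices to prove that $\sup_{x}\mathbb P_{x}(N\ge T^{3\epsilon})\le CT^{-r}$, where $N=\#\{i\le T:|X_{(i-1)T^{-1}}|\le\rho\}$.

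The core of the argument is a moment estimate on $N$. I would fix an integer $p$, expand $\mathbb E_{x}[N^p]=\sum_{i_1,\dots,i_p}\mathbb P_{x}(\forall l,\ |X_{(i_l-1)T^{-1}}|\le\rho)$, and after reordering treat the dominant contribution of strictly increasing tuples $i_1<\dots<i_p$. Conditioning successively from the largest index and using the elementary, starting-point-uniform bound $\mathbb P(|X_t+a|\le\rho)=\int_{B(0,\rho)}p_t(a,z)\,\d z\le\pi\rho^2 p_t(0,0)=\rho^2/(2t)$, one gets $\mathbb P_{x}(\forall l,\ |X_{(i_l-1)T^{-1}}|\le\rho)\le\prod_{l=1}^p\min\big(1,\tfrac{T^{2\epsilon}}{2(i_l-i_{l-1})}\big)$ (with the $l=1$ factor read as $1$ when $i_1=1$). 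Summing over all increasing tuples, using $i_l-i_{l-1}\ge1$ for $l\ge2$ and $\sum_l(i_l-i_{l-1})<T$, bounds this by $\big(1+\sum_{d=1}^{T}\min(1,\tfrac{T^{2\epsilon}}{2d})\big)^p$, and the inner sum is $O(T^{2\epsilon}\log T)$ (roughly $T^{2\epsilon}/2$ from the terms with $d\le T^{2\epsilon}/2$, plus a harmonic tail). The tuples with repeated indices contribute only lower order, with a combinatorial factor depending on $p$ alone. Hence $\sup_{x}\mathbb E_{x}[N^p]\le C_p(T^{2\epsilon}\log T)^p$.

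From here the first assertion is immediate: Markov's inequality gives $\mathbb P_{x}(N\ge T^{3\epsilon})\le C_p(T^{2\epsilon}\log T)^pT^{-3\epsilon p}=C_p(\log T)^pT^{-\epsilon p}$, and choosing $p$ with $\epsilon p\ge r+1$ absorbs the logarithm and yields a bound $CT^{-r}$, uniform in $x,y$ and $j$. For the second assertion I would simply union bound over $j$: if $\#T_j<T^{3\epsilon}$ for every $j$, then $\#\{(i,j):|X_{(i-1)T^{-1}}-Y_{(j-1)T^{-1}}|\le T^{-\frac12+\epsilon}\}=\sum_j\#T_j<T^{1+3\epsilon}$, so the event in question is contained in $\bigcup_{j=1}^T\{\#T_j\ge T^{3\epsilon}\}$, whose probability is at most $T\cdot CT^{-(r+1)}=CT^{-r}$ by the first part applied with $r+1$.

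As for the main obstacle: there is no deep one — this is a routine occupation-measure / high-moment computation. The only points that genuinely need care are (i) ensuring the elementary bound $\mathbb P(|X_t+a|\le\rho)\le\rho^2/(2t)$ is used in its starting-point-uniform form, so that both the conditioning on $Y_{(j-1)T^{-1}}$ and the successive Markov conditionings are legitimate and the final estimate is uniform in $x,y,j$, and (ii) the bookkeeping for non-distinct index tuples and for the degenerate first factor ($i_1=1$, i.e.\ $t=0$), which are harmless but should be spelled out.
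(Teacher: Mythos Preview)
Your proposal is correct and follows essentially the same route as the paper: reduce by conditioning on $Y_{(j-1)T^{-1}}$ and translation invariance to a single Brownian motion, bound the $p$-th moment of $N$ by expanding over tuples and using the uniform Gaussian estimate $\mathbb{P}(|X_t+a|\le\rho)\le \rho^2/(2t)$ along increasing chains, then apply Markov's inequality and a union bound over $j$. The only noticeable difference is in handling tuples with repeated indices: the paper makes this explicit via the recursion $\mathbb{E}[N^k]\le k^2\,\mathbb{E}[N^{k-1}]+k!\sum_{i_1<\dots<i_k}\mathbb{P}(\cap E^{i_l})$, whereas you assert (correctly) that these contribute only lower order with a $p$-dependent combinatorial factor; your use of $\min(1,\cdot)$ for each conditional factor is a slightly cleaner way to absorb the degenerate $i_1=1$ case.
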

\begin{proof}
  Let $k$ be an integer. Let $E^i$ be the event $|X_{(i-1)T^{-1}}|\leq  T^{-\frac{1}{2}+\epsilon} $ and $T'_i= \{i\in \{1,\dots, T\}: E^i$.

  Then,
  \begin{align*}
  \mathbb{E}_0[ (\# T'_j )^k]
  &\leq   \sum_{ i_1,\dots, i_k \in \{1,\dots, T\}} \mathbb{P}_x( E^{i_1}\cap \dots \cap E^{i_k})\\
  &\leq  \big( \sum_{ \substack{i_1,\dots, i_k \in \{1,\dots, T\}\\ \exists p,q: i_p=i_q }} + k! \sum_{ i_1<\dots < i_k \in \{1,\dots, T\}} \sup_x  \mathbb{P}_x( E^{i_1}\cap \dots \cap E^{i_k})\\
  &\leq   \big( k^2 \mathbb{E}[ (\# T_j)^{k-1}]+k! \sum_{ i_1<\dots < i_k \in \{1,\dots, T\}} \mathbb{P}_0( E^{i_1}\cap \dots \cap E^{i_k})   )
  \end{align*}
  Besides, for $i_1<\dots < i_k$, and setting $i_0=0$,
  \begin{align*}
  \mathbb{P}_0( E^{i_1}\cap \dots \cap E^{i_k})& = \prod_{p=1}^k \mathbb{P}_0( E^{i_p}| E^{i_1},\dots, E^{i_{p-1}})\leq  \prod_{p=1}^k \sup_x\mathbb{P}_0( E^{i_p}|X_{(i_{p-1}-1)T^{-1}}=x)\\
  &\leq  \prod_{p=1}^k \frac{\pi T^{-1+2\epsilon} }{2\pi (i_p-i_{p-1})T^{-1}}\leq 2^{-k} T^{2\epsilon k}\prod_{p=1}^k (i_p-i_{p-1})^{-1}.
  \end{align*}
  The sum over $i_1<\dots < i_k \in \{1,\dots, T\}$ is less than
  \[ C_k \log(T)^k T^{2\epsilon k}.\]

  We get
  \[
  \mathbb{E}_0[ (\# T'_j )^k]
  \leq
  C'_k \big( \mathbb{E}[ (\# T_j)^{k-1}]+\log(T)^k T^{2\epsilon k} \big),\]
  and a direct recursion on $k$ gives
  \[
  \mathbb{E}_0[ (\# T'_j )^k]
  \leq
  C''_k  \log(T)^k T^{2\epsilon k}.\]
  We then remark
  \[
  \sup_{x,y} \mathbb{P}_{x,y}( \# T_j \geq T^{3\epsilon}  )
  \leq
  \sup_{x} \mathbb{P}_{x}( \# T'_j  \geq T^{3\epsilon}  )
  \leq  T^{-3\epsilon k} \sup_x \mathbb{E}_x[ (\# T'_j )^k]\leq C''_k  \log(T)^k T^{-\epsilon k }.\]
  and we conclude by taking $k$ large enough.
\end{proof}

\begin{sublemma}
\label{sub:bridges}
  Let $E^{i,j}$ be an event depending of $X^i,Y^j$, and assume that $E$ is included on
\[\{\|X^i\|_\infty\leq T^{-\frac{1}{2}+\epsilon }\}.\]
  Then,
  \begin{itemize}
  \item For all $i<k$ and $l<j$,
  \[
  \mathbb{P}_{x,y}( E^{i,j}  E^{k,l} )\leq
  \mathbb{P}_{x,y}( E^{i,j} )\sup_{x',y'\in \R^2} \mathbb{P}_{x',y'}( E^{k-i,j-l} ).
  \]
  \item For all $i<k$ and $j>l$,
  \[
  \mathbb{P}_{x,y}( E^{i,j}  E^{k,l} )\leq 2 \Big(1+ \frac{T^{-1+2\epsilon } }{2 (k-i)T^{-1}} \Big) \sup_{x_2}\mathbb{P}_{x_2,y}(  E^{1,l}  ) \sup_{x_1,y'}\mathbb{P}_{x_1,y'}(   E^{i,l}).
  \]
  \end{itemize}
\end{sublemma}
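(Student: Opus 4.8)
The plan is to reduce both inequalities to the Markov property of $X$ and of $Y$, using that the two Brownian motions are independent. In each case one disintegrates $\mathbb{P}_{x,y}(E^{i,j}E^{k,l})$ with respect to the positions of the paths at the endpoints of the slots $i,j,k,l$; for a suitable choice of these endpoints the two events become conditionally independent, one factor being controlled by $\mathbb{P}_{x,y}(E^{i,j})$ and the other the probability of a time-translated copy of $E^{k,l}$, hence at most a supremum over starting points. The two bounds correspond to the two ways in which the slots read by $E^{i,j}$ and by $E^{k,l}$ can be interleaved, and the only point where work is required is that in the second situation the conditioning which decouples the two events turns the law of one short segment into that of a segment of a Brownian \emph{bridge} rather than a Brownian motion.

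For the first bound I would condition on the $\sigma$-algebra $\mathcal{F}$ recording $X$ up to time $iT^{-1}$ and $Y$ up to time $jT^{-1}$, so that $E^{i,j}$ is $\mathcal{F}$-measurable. Using $i<k$ for the $X$-part and disintegrating over the endpoint $Y_{(j-1)T^{-1}}$ for the $Y$-part, the segments read by $E^{k,l}$ are, conditionally on these endpoints, fresh Brownian segments; by time-homogeneity $\mathbb{P}(E^{k,l}\mid\mathcal{F})$ is then the probability of $E^{k-i,j-l}$ started from them, hence at most $\sup_{x',y'}\mathbb{P}_{x',y'}(E^{k-i,j-l})$. Multiplying by $\mathbbm{1}_{E^{i,j}}$ and taking expectations gives the claim.

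For the second bound the slots are crossed: $E^{i,j}$ reads the earlier $X$-segment $X^i$ (with $i<k$) while $E^{k,l}$ reads the later one $X^k$, so decoupling the $X$-parts forces conditioning on $X_{(k-1)T^{-1}}$, under which $X^i$ becomes the slot-$i$ restriction of a Brownian bridge ending at $X_{(k-1)T^{-1}}$, with density $p_{(k-i-1)T^{-1}}(X_{iT^{-1}},X_{(k-1)T^{-1}})/p_{(k-1)T^{-1}}(x,X_{(k-1)T^{-1}})$ relative to the Brownian law. On the event $E\subseteq\{\|X^i\|_\infty\le T^{-1/2+\epsilon}\}$ the displacement of $X$ over slot $i$ is at most $T^{-1/2+\epsilon}$, and computing the $L^2$-norm of this density with the identity $p_t(a,b)^2=\tfrac{1}{2\pi t}p_{t/2}(a,b)$, exactly as in the proof of Lemma~\ref{le:boot2}, yields the bound $1+\tfrac{T^{-1+2\epsilon}}{2(k-i)T^{-1}}$ --- the leading $1$ being the mean of the density and the second term bounding its variance. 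A Cauchy--Schwarz inequality then removes the bridge at the cost of this factor and of the absolute constant $2$; the $Y$-parts, by contrast, decouple cleanly by conditioning on $Y_{(j-1)T^{-1}}$, which keeps $Y^j$ a genuine Brownian segment, and taking suprema over the now free starting points leaves the two factors $\sup_{x_2}\mathbb{P}_{x_2,y}(E^{1,l})$ and $\sup_{x_1,y'}\mathbb{P}_{x_1,y'}(E^{i,l})$, the $Y$-slot remaining $l$ in both because the later segment $Y^j$ enters only through the conditioning. I expect this bridge-versus-motion comparison, i.e.\ extracting precisely the closed form $1+\tfrac{T^{-1+2\epsilon}}{2(k-i)T^{-1}}$ rather than a messier expression, to be the only genuine obstacle; it is exactly what the hypothesis $E\subseteq\{\|X^i\|_\infty\le T^{-1/2+\epsilon}\}$ is there for.
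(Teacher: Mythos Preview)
Your plan for the first inequality is essentially the paper's: disintegrate on the endpoints $(X_{iT^{-1}},Y_{jT^{-1}})$ of the earlier pair of slots, so that $E^{i,j}$ is measurable and $E^{k,l}$ becomes, by the Markov property and time-homogeneity, a translated event started from those endpoints. This is fine (modulo the slip that one should condition on $Y_{jT^{-1}}$, not $Y_{(j-1)T^{-1}}$, for the later $Y$-segment to be fresh).

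For the second inequality, however, your route via Cauchy--Schwarz and the $L^2$-norm of the Radon--Nikodym density does not give the stated bound. If $\mathbb{Q}$ is the bridge law and $\mathbb{P}$ the Brownian law, Cauchy--Schwarz on an indicator yields
\[
\mathbb{Q}(E)\le \mathbb{P}(E)^{1/2}\,\big\|\tfrac{d\mathbb{Q}}{d\mathbb{P}}\big\|_{L^2(\mathbb{P})}\quad\text{or}\quad \mathbb{Q}(E)\le \mathbb{P}(E)+\mathbb{P}(E)^{1/2}\,\Var\big(\tfrac{d\mathbb{Q}}{d\mathbb{P}}\big)^{1/2},
\]
so one factor $\sup\mathbb{P}_{x',y'}(E^{\cdot,\cdot})$ would come with a square root, not to the first power as in the sublemma. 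Moreover the raw $L^2$-norm of the density depends on $(x_1,x_2)$ and is not uniformly $1+\tfrac{T^{-1+2\epsilon}}{2(k-i)T^{-1}}$; in Lemma~\ref{le:boot2} this is exactly why the paper has to split into the sets $H'$ and $H''$.

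What the paper actually does is different and simpler: it bounds the Radon--Nikodym derivative \emph{pointwise} on the event, using that on $E$ the displacement $r=X_{iT^{-1}}-X_{(i-1)T^{-1}}$ satisfies $|r|\le T^{-1/2+\epsilon}$. This gives
\[
\mathbb{P}_{x_1,y'}\big(E^{1,j-l}\,\big|\,X_{(k-i)T^{-1}}=x_2\big)\ \le\ \sup_{|r|\le T^{-1/2+\epsilon}}\frac{p_{(k-i-1)T^{-1}}(x_1+r,x_2)}{p_{(k-i)T^{-1}}(x_1,x_2)}\cdot \mathbb{P}_{x_1,y'}\big(E^{1,j-l}\big),
\]
i.e.\ the unconditioned probability to the \emph{first} power. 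The factor $2\big(1+\tfrac{T^{-1+2\epsilon}}{2(k-i)T^{-1}}\big)$ is then obtained not as an $L^2$-norm but by integrating this pointwise supremum in $x_2$ (the $2$ absorbing $\tfrac{k-i}{k-i-1}$). So the hypothesis $E\subseteq\{\|X^i\|_\infty\le T^{-1/2+\epsilon}\}$ is used for an $L^\infty$ bound on the density, not an $L^2$ one; replacing your Cauchy--Schwarz step by this sup bound is what is needed to recover the inequality as stated.
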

\begin{proof}
  For the first case, we disintegrate with respect to the value of $(X_{iT^{-1}}, Y_{jT^{-1}})$. We obtain
  \begin{align*}
    \mathbb{P}_{x,y}( E^{i,j}  E^{k,l} )
  &= \int_{(\R^2)^2} \hspace{-0.3cm}p_{iT^{-1}}(x,x')p_{jT^{-1}}(y,y')\mathbb{P}_{x,y}( E^{i,j}  E^{k,l} | (X_{iT^{-1}}, Y_{jT^{-1}})=(x',y'))\\
  &\hspace{9.5cm}  \mathbb{P}_{x',y'}( E^{k-i,l-j} )) \d x' \d y'\\
  &\leq \sup_{x',y'\in\R^2} \mathbb{P}_{x',y'}(   E^{k-i,l-j} ))\int_{(\R^2)^2} p_{iT^{-1}}(x,x')p_{jT^{-1}}(y,y')\\
  &\hspace{6.3cm}\mathbb{P}_{x,y}( E^{i,j} | (X_{iT^{-1}}, Y_{jT^{-1}})=(x',y'))  \d x' \d y'\\
  &=\sup_{x',y'\in\R^2} \mathbb{P}_{x',y'}(E^{k-i,l-j})\mathbb{P}_{x,y}( E^{i,j} ).
  \end{align*}

  For the second case, we set $X_{ik}=(X_{(i-1)T^{-1}}, X_{(k-1)T^{-1}})$. Disintegrating with respect to $X_{ik}$ and $Y_{lT^{-1}})$, we obtain
  \begin{align*}
  \mathbb{P}_{x,y}&(E^{i,j} E^{k,l} )
  = \int_{(\R^2)^3} p_{(i-1)T^{-1}}(x,x_1)p_{(k-i)T^{-1}}(x_1,x_2) p_{lT^{-1}}(y,y')\\
  &\hspace{4.4cm}  \mathbb{P}_{x,y'}( E^{i,j-l}    )| X_{ik}=(x_1,x_2) )\mathbb{P}_{x_2,y}(  E^{1,l}  | Y_{lT^{-1}}=y' ) \d x_1\d x_2 \d y'\\
  &\leq \int_{(\R^2)^2 } p_{(i-1)T^{-1}}(x,x_1)p_{(k-i)T^{-1}}(x_1,x_2) \sup_{y'\in \R^2}\mathbb{P}_{x,y'}( E^{i,j-l}  | X_{ik}=(x_1,x_2)  ) \\
  &\hspace{5.5cm}  \Big(\int_{\R^2}   p_{lT^{-1}}(y,y')
  \mathbb{P}_{x_2,y}( E^{1,l}   | Y_{lT^{-1}}=y' ) \d y' \Big) \d x_1 \d x_2\\
  &=\int_{(\R^2)^2 } p_{(i-1)T^{-1}}(x,x_1)p_{(k-i)T^{-1}}(x_1,x_2) \sup_{y'\in \R^2}\mathbb{P}_{x,y'}( E^{i,j-l}  | X_{ik}=(x_1,x_2)  ) \\
  &\hspace{11cm}\mathbb{P}_{x_2,y}( E^{1,l}   )   \d x_1 \d x_2\\
  &\leq \sup_{x_2}\mathbb{P}_{x_2,y}(   E^{1,l}   ) \int_{(\R^2)^2 } p_{(i-1)T^{-1}}(x,x_1)p_{(k-i)T^{-1}}(x_1,x_2) \\
  &\hspace{7cm}\sup_{y'\in \R^2}\mathbb{P}_{x_1,y'}( E^{1,j-l}  | X_{(k-i)T^{-1}} =x_2  )\d x_1 \d x_2.
  \end{align*}
  We now have to bound $\mathbb{P}_{x_1,y'}( E^{1,j-l} | X_{(k-i)T^{-1}} =x_2  )$for which we use again the expression of the Radon-Nikodym derivative of the Brownian bridge with respect to Brownian motion. For $E^{1,j-l} )$ included on the event
  \[ \{ |X_{iT^{-1}}-X_{(i-1)T^{-1}}|\leq T^{-\frac{1}{2}+\epsilon }\},\]
  we get
  \begin{align*}
  \mathbb{P}_{x_1,y'}( &E^{1,j-l} )| X_{(k-i)T^{-1}} =x_2  )\leq  \sup_{r:|r|\leq T^{-\frac{1}{2}+\epsilon }}\frac{p_{(k-i-1)T^{-1}}(x_1+r,x_2 )}{p_{(k-i)T^{-1}}(x_1,x_2)   }  \mathbb{P}_{x_1,y'}( E^{1,j-l} ) )
  \\
  &\leq \frac{k-i}{k-i-1} \sup_{r:|r|\leq T^{-\frac{1}{2}+\epsilon }}\frac{p_{(k-i)T^{-1}}(x_1+r,x_2 )}{p_{(k-i)T^{-1}}(x_1,x_2)   }  \mathbb{P}_{x_1,y'}( E^{1,j-l} ) )\\
  &= \frac{k-i}{k-i-1} \frac{p_{(k-i)T^{-1}}(0, \max (0,1-\tfrac{T^{-\frac{1}{2}+\epsilon }}{|x_2-x_1|} )(x_2-x_1)  )}{p_{(k-i)T^{-1}}(x_1,x_2)   }  \mathbb{P}_{x_1,y'}( E^{1,j-l} ) )\\
  &\leq 2 \frac{p_{(k-i)T^{-1}}(0, \max (0,1-\tfrac{T^{-\frac{1}{2}+\epsilon }}{|x_2-x_1|} )(x_2-x_1)  )}{p_{(k-i)T^{-1}}(x_1,x_2)   }  \mathbb{P}_{x_1,y'}( E^{1,j-l} ) ),
  %\\&\leq \frac{k-i}{k-i-1}\exp\big(\frac{|x_2-x_1|R}{(k-i)T^{-1}} \big)  \mathbb{P}_{x_1,y'}( E^{1,j-l} ) ),
  \end{align*}
  so that
  \begin{align*}
  \mathbb{P}_{x,y}&(E^{i,j} E^{k,l}   )\leq
  2\sup_{x_2}\mathbb{P}_{x_2,y}( E^{1,l}   )\\
  &\int_{(\R^2)^2 }p_{(k-i)T^{-1}}(0, \max (0,1-\tfrac{T^{-\frac{1}{2}+\epsilon } }{|x_2-x_1|} )(x_2-x_1)  ) \sup_{y'\in \R^2}\mathbb{P}_{x_1,y'}( E^{1,j-l} ) ) \d x_1 \d x_2.
  \end{align*}
  The integral over $x_2$ is less than $1+ \frac{\pi T^{-1+2\epsilon }}{2\pi (k-i)T^{-1}}$, and we get
  \begin{align*}
  \mathbb{P}_{x_1,y'}( E^{1,j-l} )| X_{(k-i)T^{-1}} =x_2  )\leq
  2 \Big(1+ \frac{T^{-1+2\epsilon }}{2 (k-i)T^{-1}} \Big) \sup_{x_2}\mathbb{P}_{x_2,y}(  E^{1,l}  ) \sup_{x_1,y'}\mathbb{P}_{x_1,y'}(E^{1,l}  ).
  \end{align*}
\end{proof}
Remark that we have missed a lot of factors $T$ in the lemma, but this will not be a trouble for our purpose.

\begin{proof}[Proof of Lemma \ref{le:concentration}]
  \diams Let us first consider the case $\epsilon>\frac{1}{2}$. Then, we can bound the left hand side by
  \[ \sup_{x,y}(\mathbb{P}_{x,y}\big(nmD_{n,m}^{X,Y} \geq \frac{1}{2} m^{-\frac{1}{2}+\epsilon} \big)+
  \mathbb{P}_{x,y}\big(\frac{\ell^{X,Y}(\R^2)}{4\pi^2} \geq \frac{1}{2} m^{-\frac{1}{2}+\epsilon} \big),\]
  and therefore by
  \[ 2^p m^{\frac{p}{2}-\epsilon p}\sup_{x,y}( \mathbb{E}_{x,y}\big[ (nmD_{n,m}^{X,Y})^p \Big] +(4\pi^2)^{-p} \mathbb{E}_{x,y}\big[ (\ell^{X,Y}(\R^2))^p \big])
  \leq  C m^{\frac{p}{2}-\epsilon p} ( \log(n)^c +1 ).\]
  Since $\epsilon>\frac{p}{2}$, it suffices then to choose $p$ sufficiently large, so that $\frac{p}{2}-\epsilon p<-r$.

  \diams Let us now consider the case of genral $\epsilon$ but $r<2\epsilon$ small. For all $\epsilon'>0$, there exists $C$ such that for all $T$, the left hand side in the lemma is bounded by
  \[
  \sup_{x,y} m^{1-2\epsilon} \mathbb{E}_{x,y}\big[  \big(nmD_{n,m}^{X,Y}-\frac{\ell^{X,Y}(\R^2)}{{4\pi^2}} \big)^2 \big]\leq C m^{-2\epsilon+\epsilon'}.
  \]

  \diams For the general case, we assume that $T \sim m^\omega$, for some $\omega$ fixed $\omega$. We
  set
  \[p^+_{n,\epsilon,C}=\sup_{x,y} \mathbb{P}_{x,y}(  nm D^{X,Y}_{n,m}-\frac{\ell^{X,Y}(\R^2)}{4\pi^2} \geq C m^{-\frac{1}{2}+\epsilon } ).
  \]
  Let also $E^{i,j}=n^-m^- D^{i,j}_{n^-,m^-}-\frac{\ell^{i,j}(\R^2)}{4\pi^2}$, $R$ the variable that appears in Proposition \ref{prop:encadrement},
  $T_i= \# \{ j: E^{i,j}\neq 0\}$, $S_j=\# \{i: E^{i,j}\neq 0\}$.Then, we have
  \begin{align*}
  &p^+_{n,\epsilon,C}
  \leq\sup_{x,y} \mathbb{P}_{x,y} (  nm R\geq  \frac{C}{m^{-\frac{1}{2}+\epsilon }}  )
  + \mathbb{P}_{x,y}(\exists i:  T_i \geq T^{\epsilon'} )
  + \mathbb{P}_{x,y}(\exists j: S_j\geq T^{\epsilon'} )
  \\&+ \mathbb{P}_{x,y}(\exists (i,j): E_{i,j}\neq 0, |X_{(i-1)T^{-1}}-Y_{(j-1)T^{-1}}|\geq T^{-\frac{1}{2}+\epsilon'' })
  + \mathbb{P}_{x,y}\big(\exists (i,j): E^{i,j}\geq \frac{C  T^{-\frac{1}{2}}m^{-\frac{1}{2}+\epsilon } }{4} )
  \\
  &+\mathbb{P}_{x,y}( \#  \{(i,j): E^{i,j}\geq \frac{C  m^{-\frac{1}{2}+\epsilon}}{4} T^{-1-\epsilon'} \}\geq T^{\frac{1}{2}},
  \forall i,j T_i\leq T^{\epsilon'}, S_j\leq T^{\epsilon'}, \\
  &\hspace{6.5cm}(E_{i,j}\neq 0 \implies  |X_{(i-1)T^{-1}}-Y_{(j-1)T^{-1}}|\leq T^{-\frac{1}{2}+\epsilon'' }) \big).
  \end{align*}
  We already know that the first four terms decay more quickly than any power of $T$, provided that $\omega< \frac{\epsilon}{3}$ (for the first one, it follows from Proposition \ref{prop:encadrement}).

  $\sup_{x,y}\mathbb{P}_{x,y}(  R\geq  \frac{C}{m^{-\frac{1}{2}+\epsilon }}  )$, $\sup_{x,y}\mathbb{P}_{x,y}(\exists i:  T_i \geq T^{\epsilon'} )$ and $ \mathbb{P}_{x,y}(\exists (i,j) E_{i,j}\neq 0, |X_{(i-1)T^{-1}}-Y_{(j-1)T^{-1}}|\geq T^{-\frac{1}{2}+\epsilon'' })$
  Besides, from the usual scaling argument,
  \[
  \sup_{x,y}\mathbb{P}_{x,y}(\exists (i,j): E^{i,j}\geq \frac{C T^{-\frac{1}{2}} m^{-\frac{1}{2}+\epsilon } }{4} )\leq T^2 p^+_{n,\epsilon+\frac{\omega}{2},\frac{C}{4}},
  \]
  and provided $\epsilon'<\frac{1}{2}$,
  \begin{align*}
  &\mathbb{P}_{x,y}( \#  \{(i,j): E^{i,j}\geq \frac{C m^{-\frac{1}{2}+\epsilon}}{4} T^{-1-\epsilon'} \}\geq T^{\frac{1}{2}},
  \forall i,j T_i\leq T^{\epsilon'}, S_j\leq T^{\epsilon'}, \\
  &\hspace{6.5cm}(E_{i,j}\neq 0 \implies  |X_{(i-1)T^{-1}}-Y_{(j-1)T^{-1}}|\leq T^{-\frac{1}{2}+\epsilon'' }) )\\
  &\leq
  \mathbb{P}_{x,y}(\exists (i,j),(k,l): i\neq k, j\neq l, E^{i,j}\geq \frac{C m^{-\frac{1}{2}+\epsilon}}{4} T^{-1-\epsilon'}, E^{k,l}\geq \frac{m^{-\frac{1}{2}+\epsilon}}{4} T^{-1-\epsilon'}, \\&\hspace{6cm} |X_{(i-1)T^{-1}}-Y_{(j-1)T^{-1}}|\geq T^{-\frac{1}{2}+\epsilon'' })    )\\
  &\leq \sum_{\substack{i,j,k,l=1 \\ i\neq k , j\neq l }}^{T}\mathbb{P}_{x,y}(E^{i,j}\geq \frac{C m^{-\frac{1}{2}+\epsilon}}{4} T^{-1-\epsilon'}, E^{k,l}\geq \frac{m^{-\frac{1}{2}+\epsilon}}{4} T^{-1-\epsilon'} , \\
  &\hspace{4cm}|X_{(i-1)T^{-1}}-Y_{(j-1)T^{-1}}|\leq T^{-\frac{1}{2}+\epsilon'' } , |X_{(k-1)T^{-1}}-Y_{(l-1)T^{-1}}|\leq T^{-\frac{1}{2}+\epsilon'' }   )\\
  &\leq\sum_{\substack{i,j,k,l=1 \\ i\neq k , j\neq l }}^{T} 2\Big( 1+ \frac{T^{-1+2\epsilon''}}{2(k-i)T^{-1}} \Big)\sup_{x,y}\mathbb{P}_{x,y}(E^{i,j}\geq \frac{C m^{-\frac{1}{2}+\epsilon}}{4} T^{-1-\epsilon'})
  \sup_{x,y}\mathbb{P}_{x,y}(E^{k,l}\geq \frac{C m^{-\frac{1}{2}+\epsilon}}{4} T^{-1-\epsilon'})
  \shortintertext{(using Sublemma \ref{sub:bridges})}\\
  &\leq C' \log(T) T^{3+ 2\epsilon''} (p^+_{n, \epsilon-\epsilon'\omega, \frac{C}{4}})^2 \leq C'' T^4 (p_{n, \epsilon-\epsilon'\omega, \frac{C}{4}}^+)^2.
  \end{align*}

  Finally, for $q$ arbitrary
  \[p^+_{n, \epsilon,C}\leq C'T^{-q}+T^2 p^+_{n,\epsilon+\frac{\omega}{2}, \frac{C}{4} }+
  C'T^4 (p^+_{n, \epsilon-\epsilon'\omega, \frac{C}{4}})^2.\]
  Setting
  \[
  r(\epsilon)=\sup\Big\{ r: \exists C: \forall n,m, \sup_{x,y} \mathbb{P}_{x,y}(nmD^{x,y}_{n,m}-\frac{\ell^{X,Y}(\R^2)}\geq C' m^{-\frac{1}{2}+\epsilon } ) \leq C m^{-r} \Big\},
  \]
  which does not depend on $C'$, we get
  \[
  r(\epsilon)\leq  \min( \omega q, -2 \omega+ r(\epsilon+\frac{\omega}{2} ), -4\omega+2 r(\epsilon-\epsilon' \omega).
  \]
  Since $q$ is arbitrary,
  \[
  r(\epsilon)\leq  \min( -2 \omega+ r(\epsilon+\frac{\omega}{2} ), -4\omega+2 r(\epsilon-\epsilon' \omega).
  \]
  Set $\epsilon_0=\inf\{ \epsilon: r(\epsilon)=\infty$, and assume $\epsilon_0>0$. Let then $\omega=\frac{1}{7}r(\epsilon_0/2)$. Remark that we
  know $\epsilon_0<\infty$ (and actually, $\epsilon_0\leq \frac{1}{2}$) from the first case we treated, and we know $\omega>0$ from the second case we treated.

  Then, for all $\epsilon\in \big(\max(\frac{\epsilon_0}{2}, \epsilon_0-\frac{\omega}{2}), \epsilon_0\big)$ which is a continuity point of $r$, taking $\epsilon''$ such that $r(\epsilon-\epsilon''\omega)>\frac{6}{7} r(x)$,
  we have $2r(\epsilon-\epsilon''\omega)-4\omega> \frac{8}{7} r(x)$, so that
  $r(\epsilon)\geq r(\epsilon+ \omega \epsilon')-2\omega$. Yet, $\epsilon+ \omega \epsilon'>\epsilon_0$ so that $r(\epsilon)=\infty$. This contradicts the minimality of $\epsilon_0$. We can conclude that $r$ has no continuity point on $ \big(\max(\frac{\epsilon_0}{2}, \epsilon_0-\frac{\omega}{2}), \epsilon_0\big)$, which is absurd since $r$ is monotonic. We can conclude that $\epsilon_0=0$.

  The probability that $\frac{\ell^{X,Y}(\R^2)}{4\pi^2} -nmD^{X,Y}_{n,m}\geq m^{-\frac{1}{2}+\epsilon}$ is controlled in an identical way, and that concludes the proof.
%
%   \[q_{n,\epsilon,C}=\mathbb{P}( \big|\sum_{i,j=1}^T nm D^{i,j}_{n,m}-\frac{\ell^{X,Y}}{4\pi^2} \big|\geq C m^{-\frac{1}{2}+\epsilon } ).
%   \]
%
%   Recall that \[\mathbb{P}(\exists (i,j): |X_{(i-1)T^{-1}}-Y_{(j-1)T^{-1}}|\geq T^{-\frac{1}{2}+\epsilon} \mbox{ and } E^{i,j}\neq 0 )\]
% decreases more quickly than any power of $T$, so we assume this event does not happen. Recall also (Sublemma \ref{sub:boundTj}) that
% \[
% \mathbb{P}(\exists j : \# \{ i : |X_{(i-1)T^{-1}}-Y_{(j-1)T^{-1}}|\leq T^{-\frac{1}{2}+\epsilon} \}\geq T^{3\epsilon} )
% \]
% decreases more quickly than any power of $T$, so that we assume this events does not hold.
%
%
%   , and $\mathcal{T}\subset \{1,\dots,T\}^2$ be the set (or one of the sets) of cardinal $2T$ that maximize $\sum_{(i,j)\in \mathcal{T}} \delta^{i,j}$. This sum is then less than $2T$ times the maximum of $\delta^{i,j}$ over $ \{1,\dots,T\}^2$.
%
%   For the event corresponding to $q_{n,\epsilon,C}$ to occur, either this maximum is larger than
%   \[
%   \frac{C m^{-\frac{1}{2}+\epsilon }}{4T},
%   \]
% or the $2T$ couple of indices $(i,j)$ such that $\delta^{i,j}$
%
%
\end{proof}

\begin{corollary}
  For all $p\in [1,\infty)$, for all $r, c,\epsilon>0$,
  \[
  \sup_{x,y} \mathbb{E}_{x,y}\big[\big|m^{-\frac{1}{2}+\epsilon}|(m D^{X,Y}_{n,m}-\ell^{X,Y}_{n,m}(\R^2))\big|^p ]\underset{n^{c}<m<n}= O(m^{-r}).
  \]
\end{corollary}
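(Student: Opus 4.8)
The plan is to deduce the corollary from the concentration estimate of Lemma~\ref{le:concentration} by a standard truncation argument, once we have uniform-in-$(x,y)$ control on the moments of both $nmD^{X,Y}_{n,m}$ and $\tfrac{\ell^{X,Y}(\R^2)}{4\pi^2}$. Throughout, write $Z_{n,m}=nmD^{X,Y}_{n,m}-\tfrac{\ell^{X,Y}(\R^2)}{4\pi^2}$.

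First I would record the two moment bounds. For $nmD^{X,Y}_{n,m}$ this is immediate from Lemma~\ref{le:globalBound}: since $\mathcal{D}^{X,Y}_{n,m}\subseteq\mathcal{R}^{X,Y}_{n,m}$ one has $D^{X,Y}_{n,m}\le R^{X,Y}_{n,m}$, whence $\sup_{x,y}\mathbb{E}_{x,y}\big[(nmD^{X,Y}_{n,m})^q\big]^{1/q}\le C_q\log(nm)^{c_q}$ for every $q\ge1$. For $\ell^{X,Y}(\R^2)$ I would invoke the classical moment estimates for the intersection local time of two independent planar Brownian motions (see \cite{LeGall}, \cite{Geman}), which give $\sup_{x,y}\mathbb{E}_{x,y}\big[\ell^{X,Y}(\R^2)^q\big]<\infty$ for every $q$, translation invariance taking care of the supremum over the starting points. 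Combining the two, $\sup_{x,y}\mathbb{E}_{x,y}\big[|Z_{n,m}|^q\big]^{1/q}\le C_q\log(m)^{c_q}$, uniformly over the regime $n^c<m<n$.

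Then, fixing $p\ge1$ and $\epsilon>0$, I would choose $0<\epsilon'<\epsilon$ and split on the event $A=\{|Z_{n,m}|\le m^{-1/2+\epsilon'}\}$. On $A$ one simply bounds $\big(m^{1/2-\epsilon}|Z_{n,m}|\big)^p\le m^{(\epsilon'-\epsilon)p}$, a fixed negative power of $m$. On $A^c$, Hölder's inequality with exponents $2,2$ gives
\[
\mathbb{E}_{x,y}\Big[\big(m^{1/2-\epsilon}|Z_{n,m}|\big)^p\mathbbm{1}_{A^c}\Big]\le m^{(1/2-\epsilon)p}\,\mathbb{E}_{x,y}\big[|Z_{n,m}|^{2p}\big]^{1/2}\,\mathbb{P}_{x,y}(A^c)^{1/2}\le C\,m^{(1/2-\epsilon)p}\log(m)^{c}\,m^{-r/2},
\]
where the last inequality uses the moment bound above together with Lemma~\ref{le:concentration} applied with $\epsilon'$ in place of $\epsilon$; taking $r$ as large as we please makes this term negligible. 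Adding the two contributions yields $\sup_{x,y}\mathbb{E}_{x,y}\big[\big|m^{1/2-\epsilon}Z_{n,m}\big|^p\big]=O\big(m^{-(\epsilon-\epsilon')p}\big)$, which is the asserted rate, and letting $\epsilon'\downarrow0$ recovers the full statement. (If one only wants bare $L^p$ convergence with no rate, the cruder estimate $\|m^{1/2-\epsilon}Z_{n,m}\|_p\le m^{1/2-\epsilon}\|Z_{n,m}\|_p$ together with the logarithmic moment bound already suffices, at the cost of discarding the factor $m^{\epsilon'}$.)

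I do not expect a real obstacle: the entire substance sits in Lemma~\ref{le:concentration}, and everything else is the routine ``small probability $\times$ bounded moments'' device. The only point that warrants an explicit line rather than being taken for granted is the uniformity in $(x,y)$ of the moments of $\ell^{X,Y}(\R^2)$, which should be justified by citing the standard moment formula for Brownian intersection local time and invoking translation invariance, rather than reproved.
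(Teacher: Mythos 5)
Your truncation argument is exactly the one the paper intends: the corollary carries no explicit proof and sits directly after Lemma~\ref{le:concentration}, so the expected derivation is precisely to split on $\{|Z_{n,m}|\le m^{-1/2+\epsilon'}\}$, bound the good part pointwise, and dispose of the bad part by H\"older together with a polynomial moment bound on $Z_{n,m}$ --- which you correctly assemble from Lemma~\ref{le:globalBound} via $\mathcal D^{X,Y}_{n,m}\subseteq\mathcal R^{X,Y}_{n,m}$, plus the classical moment estimates on $\ell^{X,Y}(\R^2)$. So your approach matches what must be filled in.

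Two small caveats. First, the statement as printed is textually garbled (a stray bar, $mD^{X,Y}_{n,m}$ in place of $nmD^{X,Y}_{n,m}$, $\ell^{X,Y}_{n,m}$ in place of $\ell^{X,Y}(\R^2)/(4\pi^2)$), and read literally with the quantifier ``for all $r>0$'' it claims more than Lemma~\ref{le:concentration} can deliver: your own computation shows the good-event contribution is $m^{(\epsilon'-\epsilon)p}$, so the decay rate is capped at $\epsilon p$ and cannot be an arbitrary $r$. Your closing remark that ``letting $\epsilon'\downarrow 0$ recovers the full statement'' therefore oversells the conclusion --- sending $\epsilon'\to 0$ improves the exponent only up to $\epsilon p$. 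What you in fact prove, and what suffices for the subsequent application of Lemma~\ref{le:max} with $p$ taken large, is $O(m^{-s})$ for every $s<\epsilon p$; this should be stated rather than implying the literal ``for all $r$'' is reached. Second, the appeal to ``translation invariance taking care of the supremum'' over $(x,y)$ for the moments of $\ell^{X,Y}(\R^2)$ is slightly quick: translation invariance only reduces the supremum to one over the separation $y-x$, not to a single value; the uniformity in that parameter (moments are maximal near $y=x$) is standard and true but deserves a word, rather than being attributed to translation invariance alone.
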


The almost sure asymptotic then follows directly from Lemma 4.1 in \cite{LAWA}:
\begin{lemma}
\label{le:max}
Let $(D_N)_{N\in \mathbb{N}}$ be a random sequence which is almost surely decreasing and takes non-negative values. Assume that there exists $C\geq 0$, $r\in (0,p)$ and $p>1$ such that, for all $N$ large enough,
\[ \mathbb{E}[ |ND_N- C|^p ]\leq N^{-r }. \] %%%%\delta---->r, \delta'--->q
Then, for $q<\frac{p-1}{p}r$,
\[ \mathbb{E}\big[\sup_{N\geq N_0} N^{q} |ND_N- C|^p \big]\underset{N_0\to \infty}\longrightarrow 0. \]
\end{lemma}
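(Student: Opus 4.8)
The plan is to derive the maximal estimate from the pointwise $L^p$-bound in two moves: first replace the supremum over all integers $N\ge N_0$ by a supremum over a sparse deterministic subsequence $(M_j)_{j\ge0}$, using \emph{only} the almost sure monotonicity of $(D_N)$ to interpolate between consecutive $M_j$'s; then run a simple union bound over that subsequence. The subsequence must be chosen dense enough that the interpolation introduces a negligible error, yet sparse enough that the union bound converges; arranging both is where the exponent condition comes in.

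Concretely, I would fix an exponent $a$ with $\frac qp<a<\min(r-q,1)$; such an $a$ exists precisely because the hypothesis $q<\frac{p-1}{p}r$, together with $r<p$, gives both $\frac qp<r-q$ and $\frac qp<1$. I would then pick an integer $M_0$ so large that $\mathbb{E}[|ND_N-C|^p]\le N^{-r}$ for every $N\ge M_0$ and that $2M_0^{-a}\le1$, and set $M_{j+1}=M_j+\lceil M_j^{1-a}\rceil$. A one-line mean value estimate comparing $M_{j+1}^a$ with $M_j^a$ gives $M_{j+1}^a-M_j^a\ge a\,3^{a-1}=:c_0>0$, hence $M_j\ge(c_0 j)^{1/a}\to\infty$, while $\rho_j:=M_{j+1}/M_j-1\le 2M_j^{-a}\le1$.

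For $N$ in a block $[M_j,M_{j+1}]$, monotonicity of $(D_N)$ gives $M_jD_{M_{j+1}}\le ND_N\le M_{j+1}D_{M_j}$; expanding $M_{j+1}D_{M_j}-C=(1+\rho_j)(M_jD_{M_j}-C)+\rho_j C$ and the analogous identity from below leads to
\[
|ND_N-C|\ \le\ 2\max\bigl(|M_jD_{M_j}-C|,\ |M_{j+1}D_{M_{j+1}}-C|\bigr)+2C M_j^{-a},
\]
and, using $N^q\le M_{j+1}^q\le 2^qM_j^q$, to
\[
\sup_{N\in[M_j,M_{j+1}]}N^q|ND_N-C|^p\ \le\ K\bigl(M_j^q|M_jD_{M_j}-C|^p+M_{j+1}^q|M_{j+1}D_{M_{j+1}}-C|^p+C^pM_j^{q-ap}\bigr)
\]
for some $K=K(p,q)$. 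Then, given $N_0\ge M_0$, I would set $j_0=\max\{j:M_j\le N_0\}$, so that every $N\ge N_0$ lies in a block of index $\ge j_0$ and $j_0\to\infty$ as $N_0\to\infty$, take the supremum over $j\ge j_0$, then expectations, bound $\mathbb{E}\sup$ by $\sum\mathbb{E}$ on the two random terms while keeping the deterministic term as a supremum, and invoke the hypothesis to obtain
\[
\mathbb{E}\Bigl[\sup_{N\ge N_0}N^q|ND_N-C|^p\Bigr]\ \le\ 2K\sum_{j\ge j_0}M_j^{q-r}+KC^pM_{j_0}^{q-ap}.
\]
Since $M_j\ge(c_0 j)^{1/a}$ and $(q-r)/a<-1$ (because $a<r-q$), the first term is the tail of a convergent series and tends to $0$; since $q-ap<0$ (because $a>q/p$) and $M_{j_0}\to\infty$, so does the second. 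This is the desired conclusion.

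The crux — and the only place the precise exponent condition enters — is the choice of $a$: the block length $M_j^{1-a}$ must be small enough for the union bound $\sum_j M_j^{q-r}$ to converge, forcing $a<r-q$, yet large enough for the crude interpolation error $C M_j^{-a}$ to vanish, forcing $a>q/p$; these two requirements are jointly satisfiable exactly when $q<\frac{p}{p+1}r$, which the hypothesis $q<\frac{p-1}{p}r$ comfortably implies. Everything else is routine bookkeeping with the monotone sequence $(D_N)$ and elementary inequalities such as $(u+v)^p\le 2^{p-1}(u^p+v^p)$.
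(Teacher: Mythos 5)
Your argument is correct, and it is the natural ``sparse deterministic subsequence plus monotone interpolation plus union bound'' proof of this maximal inequality; the paper itself does not reprove the lemma but imports it from \cite{LAWA}, so this is evaluated on its own merits rather than against a local proof. Every step checks out: the choice of $a$ with $q/p<a<\min(r-q,1)$ is consistent since $q<\tfrac{p-1}{p}r<\tfrac{p}{p+1}r<p$; the lacunary sequence $M_{j+1}=M_j+\lceil M_j^{1-a}\rceil$ satisfies $\rho_j=M_{j+1}/M_j-1\le 2M_j^{-a}\le 1$ and $M_j\gtrsim j^{1/a}$; the monotonicity of $D_N$ yields $M_jD_{M_{j+1}}\le ND_N\le M_{j+1}D_{M_j}$ for $N\in[M_j,M_{j+1}]$ and the algebraic rearrangement gives $|ND_N-C|\le 2\max(|M_jD_{M_j}-C|,|M_{j+1}D_{M_{j+1}}-C|)+2CM_j^{-a}$; and the final series and deterministic term converge exactly when $a<r-q$ and $a>q/p$ respectively. (In fact you correctly observe that your argument proves the conclusion under the weaker hypothesis $q<\tfrac{p}{p+1}r$, and that the stated hypothesis $q<\tfrac{p-1}{p}r$ is comfortably stronger.) The only cosmetic points: the constant $K$ must also absorb the $2^{|q|}$ factors arising from comparing $N^q$, $M_j^q$ and $M_{j+1}^q$, which you implicitly do; and the mean-value estimate actually yields $M_{j+1}^a-M_j^a\ge a\,2^{a-1}$, of which your quoted $a\,3^{a-1}$ is a safe lower bound. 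Neither affects the conclusion.
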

Remark that, in \cite{LAWA}, $C$ was thought of as deterministic, but neither the lemma nor the proof use such a condition.

Recall that $m=m(n)$, and that we have fix some $c_1,c_2$ such that $n^{c_1}\leq m\leq n^{c_2}$. We apply Lemma \ref{le:max} to the sequence $D_N=D^{X,Y}_{n,m}$, where $n$ is the largest integer such that $nm\leq N$, with $p$ arbitrary large and $r<\frac{c_1}{2(1+c_1)}$. Then, $m^{-\frac{1} {2}}\leq (nm)^{-r}$,
so that we can apply the lemma indeed. Th exact bound given in Theorem \ref{th:intersection} is then obtain by noticing that $(nm)^{-\frac{c_2}{2(1+c_2)}}\leq m^{\frac{1}{2} \frac{c_1(1+c_2)}{c_2(1+c_1)}}$.

\section{Convergence for the joint winding measure}
\label{sec:mesure}
The space of finite measures on $\R^2$ is endowed
with the $1$-Wasserstein distance
\[ W_1(\mu,\nu)=\sup\{ \int_{\R^2} f \d (\mu-\nu): f \mbox{ $1$-Lipschitz}, f(0)=0 \}.\]
%For the latter equality, see [Kantorovich, Rubinstein].
Remark that the condition $f(0)=0$ (or a similar normalisation condition) is necessary to deal with non-probability measures.
\subsection{Convergence in $L^2$}
Our goal in this section is to show that for all $p\geq 1$, the normalised joint winding measure $\mu_{n,m}=nm\mathbbm{1}_{\mathcal{D}_{n,m}}\d z$ converges in $L^p$, for the $1$-Wasserstein distance, toward the intersection measure $\ell^{X,Y}$.

\begin{proof}[Proof of Theorem \ref{th:mesure}]
%  Our strategy is based on the following decomposition.
  Let $\mu_{n,m}^{i,j}=nm\mathbbm{1}_{\mathcal{D}^{i,j}_{n,m}}\d z $ and let $\ell_{i,j}$ be te intersection measure of $X_i$ and~$Y_j$. Let also $n^\pm=n+\pm \sqrt{n}(T+1)$, $m^\pm=n\pm \sqrt{n}(T+1)$, and $f^{\pm}$ be the positive (resp. negative) part of $f$.
  \begin{align*}
  \mathbb{E}\Big[ \Big| \int_{\R^2} f^{\pm} \d\mu_{n,m}- \int_{\R^2} f^{\pm} \d{\ell}_{X,Y}   \Big|^p\Big]
&  \leq C_p \Big( \mathbb{E}\Big[ \Big| \int_{\R^2} f^{\pm} \d\mu_{n,m}- \sum_{i,j=1}^{T} \int_{\R^2} f^{\pm} \d\mu_{n^{\pm},m^{\pm}}^{i,j}   \Big|^p\Big] \\
  &+ \mathbb{E}\Big[ \Big( \sum_{i,j=1}^T \Big|\int_{\R^2} f^{\pm} \d\mu^{i,j}_{n^{\pm},m^{\pm}}- f^{\pm}(X_{iT^{-1}}) nm D^{i,j}_{n^{\pm},m^{\pm}} \Big) \Big|^p\Big]\\
&  +\mathbb{E}\Big[ \Big|\sum_{i,j=1}^T \Big(f^{\pm}(X_{iT^{-1}}) (nm D^{i,j}_{n^{\pm},m^{\pm}}-\ell_{i,j}(\R^2)) \Big) \Big|^p\Big] \\
  &+\mathbb{E}\Big[ \Big|\sum_{i,j=1}^T\Big( f^{\pm}(X_{iT^{-1}}) \ell_{i,j}(\R^2) -\int f^{\pm}  \d\ell_{i,j}\Big)  \Big|^p\Big]
  \Big).
  \end{align*}

  \diams
  To control the first term, remark first that the supremum of $|f^{\pm}|$ over $B(0,K)$ is smaller than $K$, so that the essential supremum of $|f^{\pm}|$ over the measures we look at is smaller than $\|X\|_\infty$. We deduce that
  \[\Big|\int_{\R^2} f^\pm \d\mu_{n,m}- \sum_{i,j=1}^{T} \int_{\R^2} f^\pm \d\mu_{n^\pm,m^\pm}^{i,j} \Big|\leq nm \|X\|_\infty R,\]
  with $R$ given in Proposition \ref{prop:encadrement}.
  We get
  \begin{align*}
  &\mathbb{E}\Big[ \Big| \int_{\R^2} f^{\pm} \d\mu_{n,m}- \sum_{i,j=1}^{T} \int_{\R^2} f^{\pm} \d\mu_{n^{\pm},m^{\pm}}^{i,j}   \Big|^p\Big]\\
  &\leq n^pm^p \mathbb{E}\big[ \|X\|_\infty^{2} R^p\big]
  \leq n^pm^p \mathbb{E}\big[ \|X\|_\infty^{4}\big]^{\frac{1}{2}} \mathbb{E}\big[R^{2p}\Big]^\frac{1}{2}\\
  &\leq C \log(T)^c T^{3p-2} m^{-p}.
  \end{align*}

  \diams
  We now look at the second term. For $\alpha=\frac{1}{2}-\epsilon$, we have
  \begin{align*}
  \Big|\int_{\R^2} f^{\pm} \d\mu^{i,j}_{n^{\pm},m^{\pm}}- f^{\pm}(X_{iT^{-1}}) nm D^{i,j}_{n^{\pm},m^{\pm}}\Big|
  &=\Big|\int_{\R^2} (f^{\pm}(z)- f^{\pm}(X_{iT^{-1}}))   \d\mu^{i,j}_{n^{\pm},m^{\pm}}(z)\Big|\\
  &\leq \int_{\R^2} |z-X_{iT^{-1}}| \d\mu^{i,j}_{n^{\pm},m^{\pm}}(z)\\
  &\leq \|X\|_{\mathcal{C}^\alpha} T^{-\alpha} \mu^{i,j}_{n^{\pm},m^{\pm}}(\mathbb{R}^2)\\
  &\leq \|X\|_{\mathcal{C}^\alpha} T^{-\alpha} nm D^{i,j}_{n,m},
  \shortintertext{so that}
  \mathbb{E}\!\Big[
  \Big|\!\sum_{i,j}\! \big(\!\int_{\R^2}\! f^{\pm} \d\mu^{i,j}_{n^{\pm},m^{\pm}}\!-\! f^{\pm}(X_{iT^{-1}}) nm &D^{i,j}_{n^{\pm},m^{\pm}}\!\big)\!\Big|^p\Big]
  \leq T^{-p\alpha} \mathbb{E}[\|X\|_{\mathcal{C}^\alpha}^{2p}]^\frac{1}{2} n^pm^p \mathbb{E}\Big[\! \big(\sum_{i,j} D^{i,j}_{n^{\pm},m^{\pm}}\big)^{2p}\Big]^{\frac{1}{2}}\\
  &\leq C T^{-\frac{p}{2}+p\epsilon} n^pm^p \big( \mathbb{E}[(D_{n^{\pm},m^{\pm}}^{X,Y})^{2p}]^{\frac{1}{2}}+ \mathbb{E}\big[R^{2p}\big]^\frac{1}{2}\big)\\
  &\leq C' \log(n)^c T^{-\frac{p}{2}+p\epsilon} \qquad \mbox{(using Lemma \ref{le:globalBound}).}
  \end{align*}

  \diams We now look at the third term. We decomposite it into two parts, depending on $\|X\|_\infty$. First,
  %, which is the one because of which we cannot assume $p=2$, unless we condition $X$ on staying bounded. Let $r$ be such that $r^{-1}+p/2=1$.
  \begin{align*}
  \mathbb{E}\Big[\mathbbm{1}_{\|X\|_\infty\leq T^\epsilon }\Big| \sum_{i,j=1}^T \Big(f^{\pm}(X_{iT^{-1}}) (nm D^{i,j}_{n^{\pm},m^{\pm}}-&\ell_{i,j}(\R^2)) \Big) \Big|^p\Big]
  \leq T^{2p+2\epsilon} \max_{i,j} \mathbb{E}\big[  |nm D^{i,j}_{n^{\pm},m^{\pm}}-\ell_{i,j}(\R^2)|^p\big]\\
  %&\leq T^{2+2\epsilon} \mathbb{E}[\|X\|^{pr}_\infty]^{\frac{1}{r}}\mathbb{E}\big[|nm D^{i,j}_{n^{\pm},m^{\pm}}-\ell_{i,j}(\R^2)|^2]^{\frac{p}{2}}\\
  &\leq T^{p+2\epsilon} \sup_{x,y}\mathbb{E}_{x,y}\big[|nm D^{X,Y}_{n^{\pm},m^{\pm}}-\ell^{X,Y}(\R^2)|^p]\\
  &\leq CT^{p+2\epsilon} m^{-p+\epsilon }.
  \end{align*}
Secondly, for $r$ arbitrary large, there exists $C$ such that for all $T$,
\begin{align*}
\mathbb{E}\Big[\mathbbm{1}_{\|X\|_\infty\geq T^\epsilon }&\Big| \sum_{i,j=1}^T \Big(f^{\pm}(X_{iT^{-1}}) (nm D^{i,j}_{n^{\pm},m^{\pm}}-\ell_{i,j}(\R^2)) \Big) \Big|^p\Big]\\
&\leq \mathbb{P}(\mathbbm{1}_{\|X\|_\infty\geq T^\epsilon })^{\frac{1}{2}}\mathbb{E}\Big[\Big| \sum_{i,j=1}^T \Big(f^{\pm}(X_{iT^{-1}}) (nm D^{i,j}_{n^{\pm},m^{\pm}}-\ell_{i,j}(\R^2)) \Big) \Big|^{2p}\Big]^{\frac{1}{2}}\\
&\leq C T^{-r} T^p \sup_{x,y} \mathbb{E}_{x,y}\Big[|nm D^{X,Y}_{n^{\pm},m^{\pm}}-\ell^{X,Y}(\R^2)|^{2p} \Big]^{\frac{1}{2}}\\
&\leq C' T^{-r} T^p  \log(n)^c n^pm^p.
\end{align*}

  \diams
  Finally,
  \begin{align*}
  |f^{\pm}(X_{iT^{-1}}) \ell_{i,j}(\R^2) -\int f^{\pm}  \d\ell_{i,j}|
  &\leq \int |f^{\pm}-f^{\pm}(X_{iT^{-1}})|  \d\ell_{i,j}\\
  &\leq T^{-\alpha }\|X\|_{\mathcal{C}^\alpha} \ell_{i,j}(\R^2),
  \shortintertext{so that}
  \mathbb{E}\Big[ \Big|\sum_{i,j=1}^T\Big( f^{\pm}(X_{iT^{-1}}) \ell_{i,j}(\R^2) -\int f^{\pm}  \d\ell_{i,j}\Big)  \Big|^p\Big]
  &\leq T^{-\frac{p}{2}+p\epsilon} \mathbb{E}\Big[ \|X\|_{\mathcal{C}^\alpha}^p \Big(\sum_{i,j=1}^T \ell_{i,j}(\R^2)\Big)^p\Big]\\
  &=T^{-\frac{p}{2}+p\epsilon}\mathbb{E}[ \|X\|_{\mathcal{C}^\alpha}^p \ell^{X,Y}(\R^2)^2]\\
  &\leq T^{-\frac{p}{2}+p\epsilon} \mathbb{E}[ \|X\|_{\mathcal{C}^\alpha}^{2p}]^{\frac{1}{2}} \mathbb{E}[\ell^{X,Y}(\R^2)^{2p}]^{\frac{1}{2}}.
  \end{align*}

  All together, we obtain
  \[\mathbb{E}\Big[ \Big| \int_{\R^2} f^{\pm} \d\mu_{n,m}- \int_{\R^2} f^{\pm} \d{\ell}_{X,Y}   \Big|^p\Big]\leq C(\log(T)^c T^{3p-2} m^{-p}+ T^{-\frac{p}{2}+p\epsilon}+T^{p+2p\epsilon} m^{-p+\epsilon } ).\]
  For $T$ a small enough power of $m$ and $\epsilon$ sufficiently small, this goes to $0$ as $n\to \infty$, which concludes the proof.
\end{proof}

\subsection{Almost sure convergence}
Our goal in this section is to show that almost surely, $\mu_{n,m}$ converges toward $\ell^{X,Y}$, for the $1$-Wasserstein distance --or equivalently, in the weak sense.
\begin{proof}
  We follow the same pattern as in the previous proof. We fix $\alpha<\frac{1}{2}$ and some constants $K,K_\alpha,l,D$, and we work on the intersection of the event $\|X\|_\infty\leq K$, $\|X\|_{\mathcal{C}^\alpha}\leq K_\alpha$, $\ell^{X,Y}\leq l$, $|nm D_{n,m}- \ell^{X,Y}|\leq D$. We assume the function $f$ is positive, the negative part is treated similarily.

  Let us denote $R=R_{n,m}$ the random variable that appears in Proposition \ref{prop:encadrement}. Remark that, for all $\epsilon>0$ and $p>1$,
  \[
  \mathbb{P}( nm R_{n,m}\geq m^{-\frac{1}{2}+\epsilon } )\leq \frac{\mathbb{E}[(nmR_{n,m})^p ]}{m^{-\frac{p}{2}+p\epsilon }}\leq C \log(T)^c T^{3p-2}m^{-p\epsilon},
  \]
  so that this probability decays more quickly than any power of $m$, and therefore more quickly than any power of $n$ (recall once gain that $m\geq n^{\epsilon'}$ for some $\epsilon'$). It follows that
  \[
  \mathbb{P}(\exists n\geq n_0:  nm R_{n,m}\geq m^{-\frac{1}{2}+\epsilon } ).
  \]
  decays more quickly than any power of $n_0$, and in particular tends to $0$ as $n_0\to \infty$. We can therefore fix $n_0$ and work on the event
\[ \forall n\geq n_0,  nm R_{n,m}\leq m^{-\frac{1}{2}+\epsilon }.\]
Finally, for $\epsilon>0$
\[
\mathbb{P}(\exists i,j\in \{,\dots, t\}: |nm D^{i,j}_{n,m} -\ell^{i,j}_{n,m}|\geq T^{-1} m^{-\frac{1}{2}+\epsilon} )
\leq T^2 \sup_{x,y} \mathbb{P}_{x,y}(   |nm D^{X,Y}_{n,m} -\ell^{X,Y}_{n,m}|\geq  m^{-\frac{1}{2}+\epsilon} ).
\]
From Lemma \ref{le:concentration}, we know the right-hand side decays to $0$ more quickly than any power of $m$.  We can therefore fix $n_1$ and work on the event
\[ \forall n\geq n_1, \forall i,j\in \{1,\dots, T\},|nm D^{i,j}_{n,m} -\ell^{i,j}_{n,m}|\geq T^{-1} m^{-\frac{1}{2}+\epsilon} .\]
In the following, we assume $n\geq n_0$ and $n\geq n_1$.

  \diams We deduce  \[
  \Big| \int_{\R^2} f \d\mu_{n,m}- \sum_{i,j=1}^{T} \int_{\R^2} f \d\mu_{n,m}^{i,j}   \Big|
\leq nm \|X\|_\infty R_{n,m}\leq K m^{-\frac{1}{2}+\epsilon }\underset{n\to \infty}\longrightarrow 0.\]

  \diams Besides,
  \[
  \Big|\int_{\R^2} f \d\mu^{i,j}_{n,m}- f(X_{iT^{-1}}) nm D^{i,j}_{n,m}\Big|
  \leq \|X\|_{\mathcal{C}^\alpha} T^{-\alpha} nm D^{i,j}_{n,m},
  \]
  so that
  \begin{align*}
  \Big|\sum_{i,j=1}^T\int_{\R^2} f\d\mu^{i,j}_{n,m}- f(X_{iT^{-1}}) nm D^{i,j}_{n,m}\Big|
  &\leq \|X\|_{\mathcal{C}^\alpha} T^{-\alpha} nm \sum_{i,j=1}^T D^{i,j}_{n,m}
  \leq \|X\|_{\mathcal{C}^\alpha} T^{-\alpha} nm (D_{n^-,m^-} +R)\\
  &\leq K_\alpha (D+1) T^{-\alpha}\underset{n\to \infty}\longrightarrow0.
  \end{align*}

  \diams Besides,
  \[ \Big| \sum_{i,j=1}^T \Big(f(X_{iT^{-1}}) (nm D^{i,j}_{n,m}-\ell_{i,j}(\R^2)) \Big) \Big|
  \leq K \sum_{i,j=1}^T |nm D^{i,j}_{n,m}-\ell_{i,j}(\R^2)|\leq K T  m^{-\frac{1}{2}+\epsilon}\underset{n\to \infty}\longrightarrow0.
  \]

  %
  %  T^{4+2\epsilon} \max_{i,j} \mathbb{E}\big[  |nm D^{i,j}_{n^{\pm},m^{\pm}}-\ell_{i,j}(\R^2)|^2\big]\\
  % %&\leq T^{2+2\epsilon} \mathbb{E}[\|X\|^{pr}_\infty]^{\frac{1}{r}}\mathbb{E}\big[|nm D^{i,j}_{n^{\pm},m^{\pm}}-\ell_{i,j}(\R^2)|^2]^{\frac{p}{2}}\\
  % &\leq T^{2+2\epsilon} \sup_{x,y}\mathbb{E}_{x,y}\big[|nm D^{X,Y}_{n^{\pm},m^{\pm}}-\ell^{X,Y}(\R^2)|^2]\\
  % &\leq CT^{2+2\epsilon} m^{-1+\epsilon }.
  % \end{align*}
% Secondly, for $r$ arbitrary large, there exists $C$ such that for all $T$,
% \begin{align*}
% \mathbb{E}\Big[\mathbbm{1}_{\|X\|_\infty\geq T^\epsilon }&\Big( \sum_{i,j=1}^T \Big(f^{\pm}(X_{iT^{-1}}) (nm D^{i,j}_{n^{\pm},m^{\pm}}-\ell_{i,j}(\R^2)) \Big) \Big)^2\Big]\\
% &\leq \mathbb{P}(\mathbbm{1}_{\|X\|_\infty\geq T^\epsilon })^{\frac{1}{2}}\mathbb{E}\Big[\Big( \sum_{i,j=1}^T \Big(f^{\pm}(X_{iT^{-1}}) (nm D^{i,j}_{n^{\pm},m^{\pm}}-\ell_{i,j}(\R^2)) \Big) \Big)^4\Big]^{\frac{1}{2}}\\
% &\leq C T^{-r} T^2 \sup_{x,y} \mathbb{E}_{x,y}\Big[|nm D^{X,Y}_{n^{\pm},m^{\pm}}-\ell^{X,Y}(\R^2)|^4 \Big]^{\frac{1}{2}}\\
% &\leq C' T^{-r} T^2  \log(n)^c n^2m^2.
% \end{align*}
%
  \diams
  Finally,
\[ \sum_{i,j=1}^T|f^{\pm}(X_{iT^{-1}}) \ell_{i,j}(\R^2) -\int f^{\pm}  \d\ell_{i,j}|\leq
\sum_{i,j=1}^T T^{-\alpha }\|X\|_{\mathcal{C}^\alpha} \ell^{i,j}(\R^2)=
T^{-\alpha }\|X\|_{\mathcal{C}^\alpha} \ell^{X,Y}(\R^2)\underset{n\to \infty}\longrightarrow 0,\]
  which concludes the proof.
\end{proof}

\section{Acknowledgements}
I am deeply thankful to Thierry Lévy for the help he brought to me,
and to Nathana\"el Berestycki,  Quentin Berger,  Christophe Garban, Jean-François Le Gall, Perla Sousi, and Zhan Shi for their questions and comments before and during the defence of my PhD. They helped me a lot in the process of giving shape to the frivolous ideas I had in mind.

\section{Funding}
I am pleased to acknowledge support from the ERC Advanced Grant 740900 (LogCorRM) during the time I was writing this paper.

\bibliographystyle{plain}
\bibliography{bib.bib}

\end{document}